\title{Selective game versions of countable tightness with bounded
  finite selections}
\author[L. F. Aurichi]{Leandro F. Aurichi$^1$}
\thanks{$^1$ Partially supported by FAPESP (2013/05469-7 and 2015/25725) and by GNSAGA}
\address{Instituto de Ci\^encias Matem\'aticas e de Computa\c
c\~ao,
Universidade de S\~ao Paulo, Caixa Postal 668,
S\~ao Carlos, SP, 13560-970, Brazil}
\email{aurichi@icmc.usp.br}
\author[A. Bella]{Angelo Bella$^2$}
\thanks{$^2$ The research that led to the present paper was partially
supported by a grant of the group GNSAGA of INdAM}
\address{Department of Mathematics, University of Catania,
Citt\`a
  Universitaria, Viale A. Doria 6, 95125 Catania, Italy}
\email{bella@dmi.unict.it}
\author[R. R. Dias]{Rodrigo R. Dias$^3$}
\thanks{$^3$ Partially supported by FAPESP (2012/09214-0)}
\address{
Centro de Matem\'atica, Computa\c c\~ao e Cogni\c c\~ao,
Universidade Federal do ABC,
Avenida dos Estados, 5001,
Santo Andr\'e, SP, 09210-580, Brazil}
\email{rodrigo.dias@ufabc.edu.br}
\begin{document}

\begin{abstract}
For a topological space $X$ and a point $x \in X$, consider the
following game -- related to the property of $X$ being countably tight
at $x$.
In each inning $n\in\omega$,
the first player chooses a set $A_n$ that clusters at $x$,
and then the second player picks a point $a_n\in A_n$;
the second player is the winner if and only if 
$x\in\overline{\{a_n:n\in\omega\}}$.

In this work, we study variations of this game in which the second
player is allowed to choose finitely many points per inning rather
than one, but in which the number of points they are allowed to choose
in each inning has been fixed in advance.
Surprisingly, if the number of points allowed per inning is the same
throughout the play, then all of the games obtained in this fashion
are distinct.
We also show that a new game is obtained if the number of points
the second player is allowed to pick increases at each inning.

\end{abstract}

\maketitle

\newtheorem{defin}{Definition}[section]

\newtheorem{prop}[defin]{Proposition}

\newtheorem{prob}[defin]{Problem}

\newtheorem{lemma}[defin]{Lemma}

\newtheorem{corol}[defin]{Corollary}

\newtheorem{example}[defin]{Example}

\newtheorem{thm}[defin]{Theorem}

\newtheorem{question}[defin]{Question}

\newtheorem{fact}[defin]{Fact}

\theoremstyle{definition}

\newtheorem{remark}[defin]{Remark}

\newcommand{\pr}[1]{\left\langle #1 \right\rangle}
\newcommand{\mR}{\mathcal{R}}
\newcommand{\RR}{\mathbb{R}}
\newcommand{\QQ}{\mathbb{Q}}
\newcommand{\NN}{\mathbb{N}}
\newcommand{\ZZ}{\mathbb{Z}}
\newcommand{\mA}{\mathcal{A}}
\newcommand{\mV}{\mathcal{V}}
\newcommand{\mU}{\mathcal{V}}
\newcommand{\mP}{\mathcal{P}}
\newcommand{\mD}{\mathcal{D}}
\newcommand{\mB}{\mathcal{B}}
\newcommand{\C}{\mathrm{C}}
\newcommand{\mC}{\mathcal{C}}
\newcommand{\mO}{\mathcal{O}}
\newcommand{\mM}{\mathcal{M}}
\newcommand{\mF}{\mathcal{F}}
\newcommand{\D}{\mathrm{D}}
\newcommand{\0}{\mathrm{o}}
\newcommand{\OD}{\mathrm{OD}}
\newcommand{\Do}{\D_\mathrm{o}}
\newcommand{\sone}{\mathsf{S}_1}
\newcommand{\gone}{\mathsf{G}_1}
\newcommand{\gtwo}{\mathsf{G}_2}
\newcommand{\gn}[1]{\mathsf{G}_{#1}}
\newcommand{\sn}[1]{\mathsf{S}_{#1}}
\newcommand{\gfin}{\mathsf{G}_\mathrm{fin}}
\newcommand{\sfin}{\mathsf{S}_\mathrm{fin}}
\newcommand{\gf}{\mathsf{G}_f}
\newcommand{\Em}{\longrightarrow}
\newcommand{\menos}{{\setminus}}
\newcommand{\w}{{\omega}}
\newcommand{\1}{\textsc{One}}
\newcommand{\2}{\textsc{Two}}
\newcommand{\lev}{\mathrm{Lev}}
\newcommand{\cov}{\mathrm{cov}}
\newcommand{\fb}{\mathrm{fb}}
\newcommand{\dom}{\mathrm{dom}}
\newcommand{\id}{\mathrm{id}}

\section{Introduction}

Countable tightness is a classical topological property introduced in
\cite{tightness} as a generalization of first-countability:
a topological space $X$ is \emph{countably tight}
at a point $x\in X$ if every $A\subseteq X$ with $x\in\overline{A}$
has a countable subset $C$ with $x\in\overline{C}$.
In other words, $X$ is countably tight at $x$ if the countable sets in
$\Omega_p=\{A\subseteq X:x\in\overline{A}\}$ constitute a cofinal
family in $\Omega_p$ with respect to the order $\supseteq$.
Loosely speaking, in a countably tight space one can study clustering
properties -- hence the topology itself --
just by looking at the countable subsets of the space.

It is well-known that the product of two countably tight topological
spaces need not be countably tight (see e.g. \cite{arh72}), although
this will be the case if one of them is a first-countable space.
Thus, when dealing with countable tightness in topological products,
it is natural to consider the class of productively countably tight
spaces:
a topological space $X$ is \emph{productively countably tight} at
$x\in X$ if, for every topological space $Y$ that is countably tight
at a point $y\in Y$, the product space $X\times Y$ is countably tight
at the point $(x,y)$.
In \cite{arh79}, A. V. Arkhangel'ski\u\i{} obtained an internal
characterization for the productively countably tight Tychonoff
spaces (Theorem \ref{arh} below).
In \cite{gruen76} and, more recently, in \cite{aurichi-bella},
connections between productivity of countable tightness and selective
topological properties and their game versions have arisen (see
Theorem \ref{thm.1});
these will be the starting point of our investigations in this work.

A topological space $X$ has \emph{countable strong fan tightness} at a
point $x\in X$ \cite{sakai} if, for every sequence
$(A_n)_{n\in\omega}$ of elements of $\Omega_x$, we can select
$a_n\in A_n$ for $n\in\omega$ in such a way that
$\{a_n:n\in\omega\}\in\Omega_x$.
Countable strong fan tightness may be regarded
as a selective version of countable tightness,
in a combinatorial sense;
rather than just stating that every $A\in\Omega_x$ includes a
countable subset also in $\Omega_x$,
we now require that a new element of $\Omega_x$ can be obtained
by putting together points selected from
countably many elements of $\Omega_x$.
For reasons that will become clearer later,
we will adopt Scheepers's notation for selective topological
properties \cite{sch1} and refer to countable strong fan tightness at
$x$ as $\sone(\Omega_x,\Omega_x)$.

The property $\sone(\Omega_x,\Omega_x)$
has a combinatorial game of infinite length naturally associated to it
\cite{sch3}, which we denote by $\gone(\Omega_x,\Omega_x)$.
This game is played between the players \1 and \2
according to the following rules.
In each inning $n\in\omega$, \1 chooses a member $A_n\in\Omega_x$,
and then \2 picks a point $a_n\in A_n$.
The winner is \2 if $\{a_n : n\in\omega\} \in \Omega_x$,
and \1 otherwise.
If $P$ is a player of this game, we will write
$P \uparrow \gone(\Omega_x,\Omega_x)$ instead of ``$P$ has a winning
strategy in $\gone(\Omega_x,\Omega_x)$''
-- and this notational convention will be extended to all of the other
games we will consider in this work.

It is clear that, for every space $X$ and every $x\in X$,
$$
\2\uparrow\gone(\Omega_x,\Omega_x)\Longrightarrow
\1\not\,\uparrow\gone(\Omega_x,\Omega_x)\Longrightarrow
\sone(\Omega_x,\Omega_x).
$$
As it turns out --
by putting together Theorems 3.9 and 4.2 of
\cite{gruen76}\footnote{Although the game considered in \cite{gruen76}
appears to be quite different from $\gone(\Omega_x,\Omega_x)$, these
two games are closely related -- see Remark \ref{rmk}.}
and Theorem 2.7 of \cite{aurichi-bella} --,
the property of being productively countably tight
also lies between $\2\uparrow\gone(\Omega_x,\Omega_x)$ and 
$\sone(\Omega_x,\Omega_x)$, at least for Tychonoff spaces:

\begin{thm}[Gruenhage \cite{gruen76}, Aurichi--Bella \cite{aurichi-bella}]
\label{thm.1}
Let $X$ be a topological space and $x\in X$.
\begin{itemize}
\item[$(a)$]
If $\2\uparrow \gone(\Omega_x, \Omega_x)$ on $X$,
then $X$ is productively countably tight at $x$.
\item[$(b)$]
If $X$ is a Tychonoff space and
is productively countably tight at $x$,
then $\sone (\Omega_x, \Omega_x)$ holds in $X$.
\end{itemize}
\end{thm}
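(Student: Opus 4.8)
The plan is to treat the two implications separately; they go in opposite directions, and (as explained below) part (b) is the subtler of the two.

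\emph{Part (a).} Fix a winning strategy $\sigma$ for \2 in $\gone(\Omega_x,\Omega_x)$, a space $Y$ countably tight at a point $y\in Y$, and a set $B\subseteq X\times Y$ with $(x,y)\in\overline{B}$; we must produce a countable $C\subseteq B$ with $(x,y)\in\overline{C}$. The first step is the routine translation: for each open $V\ni y$ put $B[V]=\{a\in X:(a,b)\in B\text{ for some }b\in V\}$; then $B[V]\in\Omega_x$ (because $(x,y)\in\overline{B}$), one has $B[V]\subseteq B[V']$ whenever $V\subseteq V'$, and each $a\in B[V]$ carries a witness $b\in V$ with $(a,b)\in B$. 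The idea is to let \1 play sets $B[V]$ and have \2 answer through $\sigma$, so that the winning condition forces \2's moves to cluster at $x$ while the witnesses record points of $B$ whose $Y$-coordinates lie in the neighbourhoods \1 has used. The heart of the argument is to run not a single play but a whole countable tree of them: by recursion one builds a countable tree $T$ of finite legal positions (\2 following $\sigma$, \1's moves of the form $B[V]$) in which the one-step extensions of a position $p\in T$ are indexed by a \emph{countable} family of open neighbourhoods of $y$ refining the last neighbourhood used along $p$ -- this family being chosen, and here countable tightness of $Y$ at $y$ enters, so that the associated witnesses cluster at $y$ -- while at the same time threading in, from each position, the ``constant'' continuations in which \1 keeps repeating its last move, along which the winning condition yields \2-moves clustering at $x$. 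The required set $C$ is then the countable set of all witness pairs occurring in $T$, and $(x,y)\in\overline{C}$ is checked by chasing, for a prescribed basic neighbourhood $U\times V$ of $(x,y)$, a branch of $T$ that first refines the $Y$-coordinate into $V$ and then invokes the winning condition on the $X$-coordinate. I expect the main obstacle to be exactly this coordination of the two coordinates: a single run, or runs in which \1 is eventually constant, would force $Y$ to be first countable at $y$, so the delicate point is to set up the recursion so that countably many neighbourhood choices per level -- legitimate by countable tightness rather than first-countability -- already suffice for $C$ to cluster at $(x,y)$.

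\emph{Part (b).} Now assume $X$ Tychonoff and productively countably tight at $x$; we must verify $\sone(\Omega_x,\Omega_x)$, that is, given $(A_n)_{n\in\omega}$ with each $A_n\in\Omega_x$, choose $a_n\in A_n$ with $\{a_n:n\in\omega\}\in\Omega_x$. The strategy is to use a test space: manufacture a space $Y$ countably tight at a point $y$, together with $B\subseteq X\times Y$ with $(x,y)\in\overline{B}$ coding the sequence $(A_n)_n$, apply productive countable tightness to obtain a countable $C\subseteq B$ with $(x,y)\in\overline{C}$, and decode a selection from $C$. A first attempt takes for $Y$ the set $\{y\}\cup\bigsqcup_{n}A_n$ with each point of $\bigsqcup_n A_n$ isolated and the neighbourhoods of $y$ of the form $\{y\}\cup\{(n,a):a\in U\}$ for $U$ a neighbourhood of $x$, and $B=\{(a,(n,a)):n\in\omega\text{ and }a\in A_n\}$; one checks that $Y$ is countably tight at $y$ (using that $X$, being productively countably tight at $x$, is in particular countably tight at $x$) and that $(x,y)\in\overline{B}$. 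The main obstacle surfaces here: this crude $Y$ only yields a countable subset of $\bigcup_n A_n$ clustering at $x$, which need not meet every $A_n$ -- this ``concentration'' phenomenon is precisely why countable tightness of a product is in general strictly weaker than $\sone(\Omega_x,\Omega_x)$, and it is the place where the Tychonoff hypothesis enters. To overcome it one brings in Arkhangel'ski\u\i{}'s internal characterization of the productively countably tight Tychonoff spaces (Theorem \ref{arh}) and uses the continuous functions it provides to refine the test space -- roughly, replacing each bare token $(n,a)$ by data that also records how close $a$ is to $x$ -- so that every countable $C\subseteq B$ clustering at $(x,y)$ must be thin over each index $n$ and hence, after a routine diagonalization, delivers the desired selection; alternatively, one reads the selection off directly from the internal characterization of Theorem \ref{arh}. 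In either route the crux, and the only point at which Tychonoff-ness is used, is the upgrade from ``a countable subset of $\bigcup_n A_n$ clustering at $x$'' to ``a selection from the $A_n$ clustering at $x$''.
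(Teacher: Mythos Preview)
The paper does not itself prove Theorem~\ref{thm.1}: it is quoted as background, with $(a)$ assembled from Theorems~3.9 and~4.2 of \cite{gruen76} and $(b)$ from Theorem~2.7 of \cite{aurichi-bella}. So there is no in-paper argument to compare yours against. The only hint the paper gives about how $(a)$ is obtained is the footnote and Remark~\ref{rmk}: Gruenhage works not with $\gone(\Omega_x,\Omega_x)$ but with the dual neighbourhood game $\mathsf{G}^c_{O,P}(X,x)$, in which \1 plays open $V\ni x$ and \2 plays points of $V$; a winning strategy for \1 there (equivalent, via Galvin's duality, to $\2\uparrow\gone(\Omega_x,\Omega_x)$) is the object Gruenhage diagonalizes against. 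Your plan for $(a)$ is a \emph{direct} attack on $\gone(\Omega_x,\Omega_x)$, which is a genuine difference in route.

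On substance, however, your sketch for $(a)$ does not yet constitute a proof. The point you yourself flag as ``the delicate point'' --- choosing, at each node of the tree, a countable family of open neighbourhoods of $y$ so that the associated witnesses cluster at $y$ --- is asserted but not carried out: you say ``here countable tightness of $Y$ at $y$ enters'' without saying to \emph{which} subset of $Y$ you apply it, or why the resulting countable family suffices for the final verification that $(x,y)\in\overline{C}$. Countable tightness hands you countable subsets of clustering sets, not countable neighbourhood bases, so the passage from one to the other is the entire content of the argument and cannot be left implicit. (This is precisely why the dual-game formulation is convenient: a winning strategy for \1 in $\mathsf{G}^c_{O,P}$ \emph{is} a neighbourhood-valued object, and one applies countable tightness of $Y$ to the set of $Y$-coordinates of witnesses arising from a single node.)

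For $(b)$ you correctly diagnose that the naive test space yields only countable fan tightness, and that Theorem~\ref{arh} is where Tychonoff-ness is used; this is indeed the mechanism in \cite{aurichi-bella}. But your proposal stops at ``one reads the selection off directly from the internal characterization,'' and that reading --- setting up the centred families $\mathcal{C}_i$ so that condition $(b)$ of Theorem~\ref{arh} outputs a genuine one-point-per-$A_n$ selection rather than a countable subfamily --- is again the whole proof and is not supplied.
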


The issues we address in Section 2 refer to the properties mentioned
above.
Answering Question 2.9 of \cite{aurichi-bella}, 
we show (in Example \ref{ex.scheepers}) that
productivity of countable tightness does not imply
$\1\not\,\uparrow\gone(\Omega_x,\Omega_x)$.
The (rather unexpected) fact that the counterexample considered
has a different behaviour with respect to a seemingly minor
variation of the game $\gone(\Omega_x,\Omega_x)$
prompts us to investigate variations of this game in which \2 may
pick more than one point per inning.
This will be our main interest in Section 3, in which we show that,
surprisingly, each finite bound in the number of points \2 is allowed
to pick per inning leads to a different game.
We conclude in Section 4 with a discussion on possible new directions
that can be pursued from the results obtained in this work.

A word on notation.
The set of the natural numbers is denoted by $\omega$,
and we write $\NN$ in place of $\omega\setminus\{0\}$.
For a set $A$, the symbol ${}^{<\omega}A$
stands for the set of all of the finite sequences of
elements of $A$; furthermore, we write
${}^{\le\omega}A$ instead of $({}^{<\omega}A)\cup(\mbox{}^{\omega}A)$.
Finally,
\begin{itemize}
\item[$\cdot$]
$[A]^n$ denotes the set of all of the subsets of $A$ of cardinality
  $n$, for a given $n\in\omega$;
\item[$\cdot$]
$[A]^{<\aleph_0}$ denotes $\bigcup_{n\in\omega}[A]^n$;
\item[$\cdot$]
$[A]^{\aleph_0}$ denotes the set of all of the countable infinite
  subsets of $A$;
\item[$\cdot$]
$[A]^{\le\aleph_0}$ denotes $[A]^{<\aleph_0}\cup[A]^{\aleph_0}$.
\end{itemize}

All of the topological spaces we consider in this text are
assumed to be $T_1$.

\section{Some examples concerning $\sone(\Omega_x, \Omega_x)$,
$\gone(\Omega_x, \Omega_x)$ and productivity of countable tightness}

We begin this section with a couple of results witnessing
some differences between the properties considered in the
Introduction. From Theorem \ref{thm.1}, we know that, if $\2\uparrow
\gone(\Omega_x, \Omega_x)$ on some topological space $X$, then $X$ is
productively countably tight at $x$. An important class of spaces show
that this implication cannot be reversed: the one-point
compactifications of Mr\'owka spaces \cite{mrowka}.

\begin{prop}
\label{psi-space}
Let $\mathcal{A}$ be an uncountable almost disjoint family on $\omega$
and $\Psi(\mathcal A)$ be the corresponding Mr\'owka space. Let $X=\{p\} \,\dot\cup\, \Psi(\mathcal A)$ be the
one-point compactification of $\Psi(\mathcal A)$. Then $X$ is a
compact space that is productively countably tight at the point $p$,
yet $\2\not\,\uparrow \gone(\Omega_p,\Omega_p)$ on $X$.
\end{prop}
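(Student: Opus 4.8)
Compactness is clear, $\Psi(\mA)$ being locally compact and Hausdorff.

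For the productive countable tightness of $X$ at $p$, the plan is to verify Arkhangel'ski\u\i's internal criterion (Theorem~\ref{arh}), but I would also be ready to argue by hand. For the direct route: let $Y$ be countably tight at a point $y$ and let $B\subseteq X\times Y$ satisfy $(p,y)\in\overline{B}$; split $B$ into the three pieces $B_p$, $B_\mA$, $B_\omega$ according to whether the first coordinate is $p$, lies in $\mA$, or lies in $\omega$, and note that $(p,y)$ is in the closure of one of them, as closure distributes over finite unions. The piece $B_p$ reduces immediately to countable tightness of $Y$ (project, take a countable witness, lift it back into $\{p\}\times Y$). For $B_\mA$ and $B_\omega$ the point is that the $X$-projection of the relevant piece is not contained in a compact subset of $\Psi(\mA)$, hence is ``spread'' --- over $\mA$, over infinitely many columns, or over $\omega\setminus\bigcup\mA$; one then extracts, using countable tightness of $Y$, a countable $C$ whose $Y$-projection clusters at $y$ while its $X$-projection stays spread, and checks $(p,y)\in\overline{C}$ by translating ``every neighbourhood of $p$'' into ``all but finitely many members of $\mA$'' (resp.\ ``outside every finite union of columns together with a finite set''), using that $Y$ is $T_1$ so that a neighbourhood of $y$ meeting $\pi_Y[C]$ meets it in an infinite set. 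I expect the neighbourhood bookkeeping in the $B_\omega$ case to be the fiddliest point here.

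For $\2\not\,\uparrow\gone(\Omega_p,\Omega_p)$, fix a strategy $\sigma$ for \2. The plan is to produce a run of the game, consistent with $\sigma$, in which every point selected by \2 lies in one single column $D\in\mA$. Since $D\cup\{D\}$ --- the isolated points making up the column $D$ together with the corresponding point of $\mA$ --- is a compact subset of $\Psi(\mA)$, the selection set $\{a_n:n\in\w\}$ would then lie in a compact subset of $\Psi(\mA)$, so $p\notin\overline{\{a_n:n\in\w\}}$ and \2 loses; as $\sigma$ is arbitrary, this proves the assertion. In the first inning \1 plays $\bigcup\mA$, which is a subset of $\w$ not contained in any compact subset of $\Psi(\mA)$ and so lies in $\Omega_p$, and sets $D$ to be any column containing \2's forced reply. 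In each later inning \1, using its knowledge of $\sigma$ to foresee the reply to any candidate move, needs a legal move that drives $\sigma$'s reply back into $D$; the plan is a chase: start with the family of all columns other than $D$, play $D\cup\bigcup\mathcal{C}$, and each time $\sigma$'s reply escapes into some $C\in\mathcal{C}$, delete $C$ from $\mathcal{C}$ and retry. Each such move stays in $\Omega_p$ while $\mathcal{C}$ is infinite, and each retry deletes a fresh column, so the chase cannot persist through $|\mA|$-many rounds without $\sigma$ being forced, at some stage, to reply inside $D$ --- and this is exactly where the uncountability of $\mA$ is used. (When $\mA$ is countable the chase really can exhaust the columns; and indeed $X$ is then compact metrizable, so \2 wins the game.)

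The main obstacle is to turn this chase into a rigorous construction and to splice the per-inning chases together coherently: one has to check that the columns deleted along the way are pairwise distinct (so that an ``exhaustion'' of $\mathcal{C}$ can occur only after $|\mA|$-many deletions, hence never for uncountable $\mA$, provided \1 is also allowed to shift the target column among the columns still available), that deleting from $\mathcal{C}$ keeps $D\cup\bigcup\mathcal{C}$ in $\Omega_p$, and that the record of which columns have been deleted is carried consistently through all $\w$ innings. Once that is set up, \1's counter-play is completely determined and beats $\sigma$.
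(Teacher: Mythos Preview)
The paper's proof is two lines of citation: $X$ is compact of countable tightness, so productively countably tight by Malyhin's theorem; and $X$ is separable (via the dense set $\omega$) but not first countable at $p$ (since $\mathcal{A}$ is uncountable), so $\2\not\,\uparrow\gone(\Omega_p,\Omega_p)$ by Gruenhage's Theorem~3.6. Your direct approach to productive countable tightness through the three-piece split $B_p\cup B_{\mathcal A}\cup B_\omega$ is more laborious but plausible and can be made to work.

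Your chase argument for the game part, however, has a genuine gap. The assertion that ``the chase cannot persist through $|\mathcal A|$-many rounds without $\sigma$ being forced, at some stage, to reply inside $D$'' is unsupported and in fact false. Once the first inning is over, the target column $D$ is determined; but $D\notin\Omega_p$, so every $A\in\Omega_p$ satisfies $A\setminus D\neq\emptyset$, and nothing stops a strategy $\sigma$ from always selecting a point of $A\setminus D$ (more precisely, of $A\setminus\bigcup\{E\in\mathcal A:a_0\in E\}$, which is still nonempty for every $A\in\Omega_p$). Against such a $\sigma$ your transfinite chase in inning~$1$ deletes column after column without ever producing a candidate move whose $\sigma$-reply lands in $D$; it only halts when $\mathcal C$ is exhausted, at which point $D\cup\bigcup\mathcal C = D\notin\Omega_p$ and \1\ has no legal move. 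Uncountability of $\mathcal A$ does not help here---it makes the chase longer, not shorter---and ``shifting the target column'' does not help either, because $a_0$ and $a_1$ already lie in no common column, so the stated goal of confining all of \2's replies to a single column is unattainable for this $\sigma$. Your argument never uses the hypothesis that $\sigma$ is \emph{winning}, and that is precisely the missing leverage.

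The idea behind the paper's citation is quite different and really does use separability: passing to the dual neighbourhood--point game (Remark~\ref{rmk}), a winning strategy for \1\ there, pruned against opponents who only play points of the countable dense set $\omega$, produces a countable family of open neighbourhoods of $p$; one then argues this family must be a local base, contradicting non-first-countability at $p$.
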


\begin{proof}
Since $X$ is a compact space of countable tightness,
it follows from Theorem 4 of \cite{malyhin} that
$X$ is productively countably tight at $p$.
Since $X$ is separable but not first-countable,
Theorem 3.6 of \cite{gruen76} tells us that
\2 does not have a winning strategy in
$\gone(\Omega_p,\Omega_p)$.
\end{proof}

We will see later on (in Example \ref{leandro}) that for a specific
case of the previous proposition we can even obtain $\1 \uparrow
\gone(\Omega_p, \Omega_p)$.

In view of the two chains of implications mentioned in the
Introduction, it is natural to ask the relation between being
productively countably tightness at a point $x$ and $\sone(\Omega_x,
\Omega_x)$ for Tychonoff spaces. A class of spaces shows that one of
the implications is not true: the spaces of continuous real-valued
functions defined on a first-countable uncountable $\gamma$ space
\cite[Theorem 6]{todorcevic}.

\begin{prop}
\label{ex.todorcevic}
Let $X = C_p(Y)$, where $Y$ is a first-countable uncountable $\gamma$
space. Then $\1\not\,\uparrow \gone(\Omega_x, \Omega_x)$ on $X$,
yet $X$ is not productively countably tight at $x$.
\end{prop}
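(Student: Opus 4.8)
The plan is to handle the two assertions separately. The claim that $X = C_p(Y)$ is not productively countably tight at $x$ (where $x$ can be taken to be the constant function $\mathbf{0}$) should follow from a product-space argument: by a theorem of Todor\v{c}evi\'c, for a first-countable uncountable $\gamma$-space $Y$ the space $C_p(Y)$ has countable tightness but $C_p(Y) \times C_p(Y)$ (equivalently $C_p(Y \oplus Y)$, or a suitable power) fails to, which immediately contradicts productive countable tightness at $\mathbf{0}$. So the first half is essentially a citation of \cite{todorcevic}, and I would just spell out which point $x$ is being used and that squaring suffices.

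The substantive part is showing $\1 \not\!\uparrow \gone(\Omega_x,\Omega_x)$ on $C_p(Y)$. The standard route is via the $\gamma$-property of $Y$: recall that $Y$ is a $\gamma$-space means every $\omega$-cover of $Y$ contains a $\gamma$-subcover, and that by the Gerlits--Nagy theorem this is equivalent to $\sone(\Omega_Y,\Gamma_Y)$, and in fact (since $Y$ is a $\gamma$-space iff $C_p(Y)$ satisfies the Fr\'echet--Urysohn property for finite sets, or here: $Y \models \gone(\mathcal{O}_Y,\Gamma_Y)$-type duality) one gets a strong selective statement. I would invoke the known equivalence that, for a $\gamma$-space $Y$, $\1 \not\!\uparrow \gone(\Omega_{\mathbf{0}},\Omega_{\mathbf{0}})$ in $C_p(Y)$ — this is exactly the $C_p$-translation of the fact that in a $\gamma$-space $\1$ has no winning strategy in the Gerlits--Nagy game $\gone(\mathcal{O}_Y,\Gamma_Y)$. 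Concretely: a strategy $\sigma$ for $\1$ in $\gone(\Omega_{\mathbf{0}},\Omega_{\mathbf{0}})$ on $C_p(Y)$ produces, against each countable sequence of $\2$'s responses, a set of functions clustering at $\mathbf{0}$; dualizing each such set to a corresponding family of basic open covers of $Y$ built from the supports, one converts $\sigma$ into a strategy for $\1$ in the $\w$-cover game on $Y$. Since $Y$ is a $\gamma$-space, $\1$ has no winning strategy there (this is where first-countability of $Y$ enters, guaranteeing the tree of $\1$'s moves is countably branching so one can diagonalize through it), and the corresponding play defeats $\sigma$ back in $C_p(Y)$: the selected functions, indexed along a branch witnessing the $\gamma$-cover, converge to $\mathbf{0}$ pointwise and hence cluster at $\mathbf{0}$.

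The main obstacle, and the step I would be most careful about, is the translation between the game on $C_p(Y)$ and the combinatorial game on $\w$-covers of $Y$: a set $A \in \Omega_{\mathbf{0}}$ in $C_p(Y)$ need not consist of functions with finite support or with values in $\{0,1\}$, so one must first reduce to (or extract from) $A$ a subfamily that does correspond to an $\w$-cover — using that $\mathbf{0} \in \overline{A}$ to produce, for each finite $F \subseteq Y$ and each $\varepsilon > 0$, some $f \in A$ with $|f| < \varepsilon$ on $F$, and then reading the open sets $\{y : |f(y)| < \varepsilon\}$ as cover elements. Keeping the bookkeeping between $\1$'s strategy tree in one game and the other straight (in particular that countable branching is preserved, so that the known no-winning-strategy result for the $\gamma$-cover game applies) is the delicate point; once that dictionary is set up, defeating $\sigma$ is routine. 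I would therefore organize the proof as: (1) fix $x = \mathbf{0}$ and recall the $C_p$-dictionary lemma; (2) given $\sigma$ for $\1$, build the induced strategy $\sigma'$ in the $\w$-cover game; (3) apply the Gerlits--Nagy/$\gamma$-space fact that $\1 \not\!\uparrow$ there; (4) pull the resulting play back to a $\sigma$-play won by $\2$; (5) separately, cite Todor\v{c}evi\'c for the failure of productive countable tightness.
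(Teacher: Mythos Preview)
Your proposal has a genuine gap in the second half, and a misattribution that obscures it.

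For the claim that $C_p(Y)$ is not productively countably tight, you propose to show that $C_p(Y)\times C_p(Y)$ (``or a suitable power'') fails to be countably tight, and you attribute this to Todor\v{c}evi\'c. Neither piece holds up. The citation to \cite{todorcevic} in the paper is only for the \emph{existence} of a first-countable uncountable $\gamma$-space; Todor\v{c}evi\'c says nothing about tightness of $C_p(Y)^2$. More importantly, the statement itself is false: $C_p(Y)^2\cong C_p(Y\oplus Y)$, and by the Arhangel'ski\u\i--Pytkeev theorem $C_p(Z)$ has countable tightness iff every finite power $Z^n$ is Lindel\"of. Since $C_p(Y)$ is Fr\'echet--Urysohn (hence countably tight), every $Y^n$ is Lindel\"of; but $(Y\oplus Y)^n$ is a finite union of copies of $Y^n$, so it is Lindel\"of too, and $C_p(Y)^2$ \emph{is} countably tight. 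The same reasoning applies to every finite power, and to $C_p(Y)^\omega\cong C_p(Y\times\omega)$ as well, so no power of $C_p(Y)$ can witness the failure. The paper's argument is entirely different: it invokes Uspenski\u\i's theorem \cite{uspenskii} that $C_p(Y)$ fails to be productively countably tight once the $G_\delta$-modification of $Y$ is not Lindel\"of. This is where first-countability and uncountability of $Y$ are actually used --- first-countability makes every point a $G_\delta$, so the $G_\delta$-modification is discrete, and an uncountable discrete space is not Lindel\"of.

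For the first half, your game-translation outline is in the right spirit but more laborious than necessary, and the claim that first-countability of $Y$ ``guarantees the tree of \1's moves is countably branching'' is confused: the branching in \1's strategy tree is governed by \2's choices from sets in $\Omega_{\mathbf 0}$, which have nothing to do with local bases in $Y$, and in any case first-countability plays no role in this half of the argument. The paper proceeds directly: by Gerlits--Nagy \cite{g-n}, a $\gamma$-space $Y$ makes $C_p(Y)$ strictly Fr\'echet at every point, and then Sharma's theorem \cite{sharma} (see also Theorem~\ref{thm.sharma}) gives $\1\not\,\uparrow\gone(\Omega_f,\Omega_f)$ immediately. Your dualization can be made to work, but it amounts to re-proving these two cited results inside the argument.
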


\begin{proof}
It follows from results of Gerlits and Nagy \cite[Theorem 2]{g-n}
and Sharma \cite[Theorem 1]{sharma} that
$\1\not \uparrow\gone(\Omega_f,\Omega_f)$ for each $f\in C_p(Y)$;
however, $C_p(Y)$ is not productively countably tight since
the $G_\delta$-modification of $Y$ is not Lindel\"of
\cite[Theorem 1]{uspenskii}.
\end{proof}

The other implication was a question in a paper of the
first- and the second-named authors:

\begin{question}[Aurichi--Bella \cite{aurichi-bella}]
\label{q.1}
Let $X$ be productively countably tight at $x\in X$.
Is it true that $\1\not\,\uparrow\gone(\Omega_x,\Omega_x)$ on $X$?
\end{question}

In order to obtain a negative answer to Question \ref{q.1}
in a stronger sense,
we evoke an example from \cite{sch3}.

Here we consider the game $\gfin(\Omega_x, \Omega_x)$,
which is a standard variation of $\gone(\Omega_x, \Omega_x)$
-- see \cite{sch3}.
In each inning $n\in\omega$ of $\gfin(\Omega_x, \Omega_x)$,
\1 chooses $A_n\in\Omega_x$, and then
\2 chooses a finite subset $F_n\subseteq A_n$.
The winner is \2 if $\bigcup_{n\in\omega}F_n\in\Omega_x$,
and \1 otherwise.

\begin{example}
\label{ex.scheepers}
There exists a countable space $X$ with
only one non-isolated point $p$ that is productively countably
tight at $p$, but on which $\1\uparrow \gfin(\Omega_p,
\Omega_p)$.
\end{example}

\begin{proof}
Let $X = \omega \,\dot\cup\, \{p\}$ be the space in
\cite[pp.\! 250--251]{sch3}, defined as follows.
We first construct the function $F$ that will be
a winning strategy for \1 in $\gfin(\Omega_p,\Omega_p)$,
and then we make use of $F$ to define the topology of $X$.

Fix a partition $\{Y_n:n\in\omega\}$ of $\omega$
in infinite sets.
Now construct a strategy $F$ for \1 such that:
\begin{itemize}
\item[$(1)$]
for every sequence of non-empty finite sets $N_1, \ldots,
N_k\subseteq \omega$ that are subsets of different $Y_j$s,
there is an $i$ for which $F(N_1, \ldots, N_k)=Y_i$;
\item[$(2)$]
for every $i$, there is a unique sequence of finite sets $N_1,
\ldots, N_k$ such that:
\begin{itemize}
\item[$(a)$]
each $N_j$ is a non-empty finite subset of some $Y_{k_j}$ with
$k_j\ne i$;
\item[$(b)$]
if $j\ne m$, then $k_j\ne k_m$;
\item[$(c)$]
$F(N_1,\ldots,N_k)=Y_i$.
\end{itemize}
\end{itemize}

A way to define $F$ can be the following.
Let $\{T_n:n\in\omega\}$ be an injective enumeration of all finite
sequences of non-empty finite sets lying in different $Y_j$s,
and let $I_n$ be the set of those $j\in\omega$ such that $Y_j$
contains an element in the sequence $T_n$. By induction, for each
$n\in\omega$ let $i=\min(\omega\setminus (K\cup I_n))$,
where $K$ is the set of those $j\in\omega$ such that
$F(T_m)=Y_j$ for $m<n$. Then, put $F(T_n)=i$.

For each play $P=(O_1,N_1, O_2,N_2, \ldots)$ of
$\gfin(\Omega_p,\Omega_p)$ in $X$, we put
$S(P)=\bigcup_{k\in\omega}N_k$.

Every point of $\omega$ is isolated in $X$, while
a  local base at $p$ in $X$ consists of sets of the form
$V(G)=X\setminus \bigcup_{P\in G}S(P)$
for $G$ a finite set of
plays in which \1 uses the strategy $F$.

\begin{fact}[\cite{sch3}]
$\1\uparrow\gfin(\Omega_p, \Omega_p)$ on $X$.
\end{fact}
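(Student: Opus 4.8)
The plan is to verify directly that the strategy $F$ constructed above witnesses $\1\uparrow\gfin(\Omega_p,\Omega_p)$ on $X$. Two things must be checked: that $F$ is a \emph{legal} strategy, i.e. that every set $F(N_1,\ldots,N_k)$ it prescribes lies in $\Omega_p$; and that \1 wins every play of $\gfin(\Omega_p,\Omega_p)$ in which they follow $F$. (We tacitly extend $F$ to a total function on all finite sequences of finite subsets of $\omega$ by letting it ignore the empty moves of \2, so that it becomes a genuine strategy; this is harmless for everything below.)

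The combinatorial core is the following self-avoiding feature of $F$: since in the construction the output index $i'$ with $F(T_n)=Y_{i'}$ is chosen \emph{outside} the set $I_n$ of indices of the members of $\{Y_n:n\in\omega\}$ met by the sequence $T_n$, the function $F$ never outputs $Y_i$ once a non-empty subset of $Y_i$ has occurred among \2's earlier moves. From this, a routine induction shows that in any play $Q=(O_1,N_1,O_2,N_2,\ldots)$ in which \1 follows $F$, every move $O_k$ of \1 is a member of $\{Y_n:n\in\omega\}$ (here one uses $(1)$), and moreover the non-empty moves of \2 lie in \emph{pairwise distinct} such members: each non-empty $N_k$ is contained in the single member $O_k$, and once \2 has drawn a non-empty set from some $Y_i$ that $Y_i$ is never offered by \1 again, by the self-avoiding feature. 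Consequently $S(Q)=\bigcup_{k\in\omega}N_k$ meets each $Y_i$ in at most one of the finite sets $N_k$, so $S(Q)\cap Y_i$ is finite for every $i$.

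This yields legality at once: given a finite set $G$ of plays in which \1 follows $F$, the set $\bigcup_{P\in G}S(P)$ meets the infinite set $Y_i$ in a finite set and hence cannot contain it, so $Y_i$ meets the basic neighbourhood $V(G)=X\setminus\bigcup_{P\in G}S(P)$ of $p$; since such sets form a local base at $p$, this means $p\in\overline{Y_i}$, i.e. $Y_i\in\Omega_p$. (The same computation shows $\bigcup_{P\in G}S(P)\ne\omega$ for every finite $G$, so $p$ is not isolated and $X$ is as described in the statement.) As for \1 winning: if $P=(O_1,N_1,O_2,N_2,\ldots)$ is any play in which \1 follows $F$, then $\bigcup_{n\in\omega}N_n=S(P)$ by definition, and $V(\{P\})=X\setminus S(P)$ is a basic open neighbourhood of $p$ (note $p\notin S(P)\subseteq\omega$) disjoint from $S(P)$; hence $p\notin\overline{S(P)}$, that is $\bigcup_{n\in\omega}N_n\notin\Omega_p$, and \2 loses the play $P$.

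I expect the only genuine work to lie in the finiteness of $S(Q)\cap Y_i$, which rests entirely on the self-avoiding nature of $F$ — that $F$ never re-offers a $Y_i$ from which \2 has already drawn — guaranteed by properties $(1)$--$(2)$ (concretely, by the choice $i'=\min(\omega\setminus(K\cup I_n))$ in the construction). Everything else is essentially forced, since the topology of $X$ was rigged precisely so that the second player's target set $S(P)$ is closed and misses $p$. The residual points — the degenerate (empty) moves of \2, and the check that the family $\{V(G)\}$ is closed under finite intersections and genuinely constitutes a neighbourhood base at $p$ in a $T_1$ topology — are routine.
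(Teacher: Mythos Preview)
Your argument is correct. The paper does not supply its own proof of this fact; it simply attributes the result (and indeed the construction of $X$) to Scheepers \cite{sch3}, so there is nothing in the present paper to compare your argument against beyond the setup itself. Your verification proceeds exactly along the lines one would expect: the ``winning'' half is immediate from the definition of the topology (the basic neighbourhood $V(\{P\})$ witnesses $p\notin\overline{S(P)}$), while the ``legality'' half --- that each $Y_i$ actually lies in $\Omega_p$ --- is where the combinatorics of $F$ enter, via your observation that the choice $i'=\min(\omega\setminus(K\cup I_n))$ forces $S(P)\cap Y_i$ to be finite for every play $P$ compatible with $F$. Your handling of empty moves by \2 and the remaining routine checks are fine.
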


\begin{fact}
\label{prodtight}
$X$ is productively countably tight at $p$.
\end{fact}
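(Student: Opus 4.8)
The plan is to use the internal characterization of productive countable tightness available for spaces of this type. Since $X$ is countable and has a single non-isolated point $p$, the relevant criterion is Malyhin's characterization (Theorem 4 of \cite{malyhin}, already invoked in the proof of Proposition \ref{psi-space}): a countable space is productively countably tight at $p$ if and only if $p$ is not the limit of a "weakly summable" family of finite sets — equivalently, there is no partition of a neighbourhood trace into finite pieces that witnesses a failure. More usefully here, I would work directly from Arkhangel'ski\u\i's characterization (Theorem \ref{arh}) or from the fact that, for countable spaces, productive countable tightness at $p$ is equivalent to the statement that $\Omega_p$ is not a "$\gamma$-cover-like" structure in the sense that every element of $\Omega_p$ contains a subset in $\Omega_p$ that can be split nicely. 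Concretely, the cleanest route is: show that whenever $A \in \Omega_p$, then in fact $|A| = \aleph_0$ and $A$ is cofinite in $\omega$ minus finitely many of the "blocks" determined by the strategy — from which a strong combinatorial smallness of $\Omega_p$ follows.

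First I would unwind the definition of the topology: a basic neighbourhood of $p$ has the form $V(G) = X \setminus \bigcup_{P \in G} S(P)$ where $G$ is a finite set of $F$-plays and each $S(P) = \bigcup_k N_k$ is a countable union of finite sets lying in distinct blocks $Y_j$. The key structural observation — which I expect to extract from conditions $(1)$ and $(2)$ on $F$ — is that for a single $F$-play $P$, the set $S(P)$ meets each block $Y_j$ in a finite set, and moreover the blocks that $S(P)$ meets are "used up" by the uniqueness clause $(2)$; hence $S(P)$ is a very thin subset of $\omega$ (in particular it is not in $\Omega_p$, since its complement is still a neighbourhood of $p$). Consequently $A \subseteq \omega$ fails to be in $\Omega_p$ exactly when $A \subseteq \bigcup_{P \in G} S(P)$ for some finite $G$, i.e. $A$ is covered by finitely many "thin" sets. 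This gives a concrete description of $\Omega_p$: $A \in \Omega_p$ iff $A$ is not contained in any finite union of sets of the form $S(P)$.

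Next, using this description, I would verify the characterization of productive countable tightness. The natural criterion to check (following \cite{malyhin} for countable spaces, or equivalently the $\alpha_i$-type conditions in Arkhangel'ski\u\i's theorem) is: for every countable family $\{A_m : m \in \omega\} \subseteq \Omega_p$ there is a single $A \in \Omega_p$ which is almost contained in each $A_m$ — that is, $p$ has a "local $\pi$-network" behaving like a single countably generated filter, or more precisely that the trace filter of neighbourhoods of $p$ is a $P$-point-like / simple object. Because each $V(G)$ is cofinite relative to a countable union of thin sets, and thin sets are closed under the relevant operations only up to finitely many blocks, one shows that given countably many $A_m \in \Omega_p$ one can diagonalize across the blocks $Y_n$ to build $A \in \Omega_p$ with $A \setminus A_m$ finite for all $m$; this is where conditions $(1)$ and $(2)$ on $F$ do the real work, ensuring the diagonal set still escapes every finite union of $S(P)$'s. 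I expect \textbf{this diagonalization — showing the constructed $A$ is genuinely in $\Omega_p$, i.e. not swallowed by finitely many $S(P)$'s — to be the main obstacle}, since it requires carefully tracking how the uniqueness clause $(2)$ limits which blocks a given finite family of $F$-plays can touch. Once $A$ is produced, productive countable tightness at $p$ follows from the cited characterization, completing the proof.
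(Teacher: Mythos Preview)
Your proposal has a genuine gap: the central property you aim to establish is simply false for this space. You claim that for every countable family $\{A_m : m\in\omega\}\subseteq\Omega_p$ there is $A\in\Omega_p$ with $A\setminus A_m$ finite for each $m$. But take $A_m=Y_m$, the blocks of the fixed partition of $\omega$. Each $Y_m$ lies in $\Omega_p$, since any $S(P)$ meets $Y_m$ in at most one finite set $N_k$, so no finite union of $S(P)$'s can cover the infinite set $Y_m$. Yet the $Y_m$ are pairwise disjoint, so any set almost contained in every $Y_m$ is finite and therefore not in $\Omega_p$. Hence your diagonalization cannot succeed, and the ``P-filter-like'' characterization you invoke is not valid here (nor is it what Malyhin's theorem or Arkhangel'ski\u\i's criterion actually says).

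The paper's argument is quite different. It verifies Arkhangel'ski\u\i's $\pi$-network condition (Theorem~\ref{arh}) directly, and the real engine is a structural lemma about the strategy $F$: if $P$ and $P'$ are distinct $F$-plays, then $S(P)\cap S(P')$ is finite (this is where the uniqueness clause $(2)$ is used). From this one gets that any infinite set determines at most one $F$-play that can cover it. The proof then builds the required countable $I^*$ in $\omega$ stages: at stage $n$ one works inside a basic neighbourhood $V(G_0\cup\cdots\cup G_{n-1})$, picks a countable $I'_n$ with pairwise disjoint witnesses $A_i$, and if this fails to satisfy the criterion one records a new finite set $G_n$ of plays (disjoint from the earlier ones). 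For the union $I^*=\bigcup_n I'_n$, any choice of points $s_i\in A_i\cap B_i$ yields infinitely many infinite sets $S_n=\{s_i:i\in I'_n\}$; covering them by a single finite $H$ of $F$-plays would force, via the uniqueness lemma, infinitely many distinct plays into $H$, a contradiction. The crux is therefore the ``almost disjointness'' of the sets $S(P)$, not any P-point behaviour of the neighbourhood filter.
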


In order to prove Fact \ref{prodtight},
we will make use of a result obtained in
Theorem 3.5 of \cite{arh79}.
Recall that a family $\mathcal{F}$ of subsets of a space $Y$ is a
\emph{$\pi$-network} at a point $y\in Y$ if every open neighbourhood
of $y$ includes some element of $\mathcal{F}$,
and that a family of sets is \emph{centred} if each of its nonempty
finite subfamilies has nonempty intersection.

\begin{thm}[Arkhangel'ski\u\i\mbox{} \cite{arh79}]
\label{arh}
Let $Y$ be a
Tychonoff  space and $y\in Y$. The following assertions are
equivalent:
\begin{itemize}
\item[$(a)$]
$Y$ is productively countably tight at $y$;
\item[$(b)$]
whenever $\bigcup_{i\in I}\mathcal{C}_i\subseteq [Y]^{\le
\aleph_0}$
is a $\pi$-network at $y$ such that each $\mathcal{C}_i$ is
nonempty
and centred,
there is a countable $I^*\subseteq I$  
such that $y\in \overline {\bigcup_{i\in I^*}A_i}$
for every choice of $A_i\in \mathcal{C}_i$, $i\in I^*$.
\end{itemize}
\end{thm}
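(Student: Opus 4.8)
The plan is to prove the equivalence by establishing the two implications separately, exploiting throughout the preliminary observation that $(b)$ already forces $Y$ to be countably tight at $y$. Indeed, given $A\in\Omega_y$, take the index set $I=A$ and the trivially centred singleton families $\mC_a=\{\{a\}\}$; the family $\{\{a\}:a\in A\}$ is a $\pi$-network at $y$ precisely because $y\in\overline A$, so $(b)$ returns a countable $I^*\subseteq A$ for which the (unique) choice gives $y\in\overline{I^*}$. Thus in both directions I may assume $Y$ is countably tight at $y$; this is also the template for how $(b)$ will be applied, keeping in mind that the members of a $\pi$-network hug $y$ (each is contained in a prescribed neighbourhood) while the selected union is required to cluster at $y$.

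For $(b)\Rightarrow(a)$ I would fix a space $Z$ countably tight at a point $z$ and a set $B\subseteq Y\times Z$ with $(y,z)\in\overline B$, and seek a countable $C\subseteq B$ with $(y,z)\in\overline C$. Writing $p_Y,p_Z$ for the two projections, the point $(y,z)\in\overline B$ says that for every open $U\ni y$ and $W\ni z$ the set $B\cap(U\times W)$ is nonempty, so in particular $y\in\overline{p_Y(B)}$. The plan is to organize the first coordinates of $B$ into centred families $\mC_i$, each clustering around a single point $c_i\in p_Y(B)$ approaching $y$, with the internal variation of $\mC_i$ encoding the trace in $Z$ of the corresponding witnesses; the union $\bigcup_i\mC_i$ is then a $\pi$-network at $y$ because the $c_i$ accumulate at $y$. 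Applying $(b)$ yields a countable $I^*$ all of whose choice functions cluster at $y$, and the ``for every choice'' robustness is exactly what lets me read off second coordinates converging to $z$, producing the required countable $C\subseteq B$ once countable tightness of $Z$ is used to pin down the $Z$-side.

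For $(a)\Rightarrow(b)$ I would argue by contraposition: assume $(b)$ fails, witnessed by a $\pi$-network $\bigcup_{i\in I}\mC_i\subseteq[Y]^{\le\aleph_0}$ of nonempty centred families such that every countable $I^*\subseteq I$ admits a choice $A_i\in\mC_i$ $(i\in I^*)$ with $y\notin\overline{\bigcup_{i\in I^*}A_i}$. From this I would build a space $Z$, countably tight at a distinguished point $z$, and a set $B\subseteq Y\times Z$ refuting $(a)$. Concretely, I would let the points of $Z\setminus\{z\}$ be the proper filters $\mF_i$ generated by the centred families $\mC_i$ (all isolated), topologized so that the neighbourhood filter of $z$ sees cofinally many of the $\mF_i$, and set $B=\{(a,\mF_i):i\in I,\ A\in\mC_i,\ a\in A\}\subseteq Y\times Z$. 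Then $(y,z)\in\overline B$ because $\bigcup_{i\in I}\mC_i$ is a $\pi$-network at $y$, while the failure clause, applied to the countably many indices met by any candidate countable $C\subseteq B$, supplies a choice function whose union does not cluster at $y$, yielding open $U\ni y$ and $W\ni z$ with $B\cap(U\times W)\cap C=\emptyset$, so that $(y,z)\notin\overline C$.

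The delicate point in both directions — and the main obstacle — is the simultaneous management of the two quantifiers built into $(b)$: the centredness of each $\mC_i$ and the ``for every choice function'' clause. In $(a)\Rightarrow(b)$ the topology on $Z$ must be coarse enough that every countable subset of $B$ is defeated by some choice function, yet fine enough that $Z$ is genuinely countably tight at $z$ (so that $B$ is a legitimate test for productive tightness); matching the centred filters $\mF_i$ to the neighbourhood filter of $z$ is precisely what reconciles these demands, and verifying countable tightness of $Z$ is where the centredness hypothesis becomes essential. In $(b)\Rightarrow(a)$ the mirror difficulty is to extract, from an \emph{arbitrary} countably tight factor $Z$, centred families indexed so that a single application of $(b)$ controls all the second coordinates at once; I expect this bookkeeping, rather than any isolated topological fact, to be the crux of the argument.
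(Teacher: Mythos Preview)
The paper does not contain a proof of Theorem~\ref{arh}: the result is quoted from \cite{arh79} (it is Theorem~3.5 there) and then used as a black box in the verification of Fact~\ref{prodtight}. There is therefore nothing in the present paper against which to compare your proposal; for the actual argument you would have to consult \cite{arh79} directly.

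Independently of that, your sketch of $(a)\Rightarrow(b)$ has a genuine gap. You take $Z=\{z\}\cup\{\mF_i:i\in I\}$ with each $\mF_i$ isolated and the topology at $z$ left unspecified beyond ``the neighbourhood filter of $z$ sees cofinally many of the $\mF_i$''. But look at your separation step: given a countable $C\subseteq B$ with index set $I^*=\{i:\exists a\,(a,\mF_i)\in C\}$, the bad choice $(A_i)_{i\in I^*}$ furnishes an open $U\ni y$ disjoint from $\bigcup_{i\in I^*}A_i$, yet a point $(a,\mF_i)\in C$ may well have $a\in U$, since $a$ came from some \emph{other} member of $\mC_i$. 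Hence, to get $(U\times W)\cap C=\emptyset$, the neighbourhood $W$ of $z$ must omit every $\mF_i$ with $i\in I^*$; that forces each co-countable subset of $Z$ containing $z$ to be open. With that topology, however, $Z$ is \emph{not} countably tight at $z$ once $I$ is uncountable (as it must be when $(b)$ fails), since $z\in\overline{\{\mF_i:i\in I\}}$ but $z\notin\overline D$ for every countable $D\subseteq\{\mF_i:i\in I\}$. So the two desiderata you name in your final paragraph --- coarse enough to defeat countable $C$, fine enough for countable tightness --- are in direct conflict for the space you describe. Collapsing each $\mC_i$ to a single isolated point discards precisely the data (the centredness of $\mC_i$ and the countability of its members) that a working construction of $Z$ has to exploit; the auxiliary space in \cite{arh79} is built more carefully so that this information survives in the topology near $z$.
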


We will show that condition $(b)$ holds for $X$ at $p$. Let
$\bigcup_{i\in I}\mathcal{C}_i\subseteq [X]^{\le \aleph_0}$ be a
$\pi$-network at $p$ such that each $\mathcal C_i$ is nonempty
and centred. Since
$X$ is a regular space, we can assume that each $\mathcal C_i$
consists of closed sets. We may further assume that $p\notin
\bigcap \mathcal C_i$ for every $i\in I$, since an $i'\in I$
satisfying $\forall A \in \mathcal{C}_{i'} \; (p\in A)$ 
would immediately imply the required condition with $I^*=\{i'\}$.

\begin{lemma}
\label{uniqdet}
Let $P$ be a play during which player $\1$ uses $F$.  If $S$ is
infinite
and $S\subseteq S(P)$,  then $P$ is uniquely determined.
\end{lemma}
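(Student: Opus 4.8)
The plan is to reconstruct the play $P = (O_1, N_1, O_2, N_2, \ldots)$ inning by inning, using the fact that an infinite $S \subseteq S(P) = \bigcup_k N_k$ must meet infinitely many of the blocks $Y_j$ (since each $N_k$ is finite, hence contained in finitely many $Y_j$'s, so if only finitely many $Y_j$'s were hit, $S(P)$ — and therefore $S$ — would be finite). In particular, for \emph{every} $i \in \omega$ there are arbitrarily large indices $j$ with $Y_j \cap S \neq \emptyset$; I will exploit this together with property $(2)$, the uniqueness clause in the construction of $F$.

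First I would show that the moves $N_1, N_2, \ldots$ of \2 are determined. The key observation is that condition $(2)$ says: for each $i$ there is a \emph{unique} finite sequence $(N_1, \ldots, N_k)$ of non-empty finite sets, lying in pairwise distinct $Y_{k_j}$ with all $k_j \neq i$, such that $F(N_1, \ldots, N_k) = Y_i$. During the play $P$, at inning $k$ player \1 plays $O_k = F(N_1, \ldots, N_{k-1})$, which is some $Y_{i_k}$; then $N_k$ is a finite subset of $O_k = Y_{i_k}$. Since the $O_k$ are the values of $F$ along the play and $F$ by clause $(2b)$ (via the uniqueness) never repeats a block that already contains one of the chosen $N_j$'s — more precisely, the sequence $(N_1,\dots,N_{k-1})$ is of the type described in $(2a)$–$(2c)$ with respect to $i_k$, so the $N_j$ live in distinct blocks all different from $i_k$ — the sets $N_1, N_2, \ldots$ lie in pairwise distinct blocks $Y_{i_1}, Y_{i_2}, \ldots$, and $N_k \subseteq Y_{i_k}$. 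Thus $S(P) = \bigsqcup_k N_k$ is split by the partition $\{Y_n\}$ exactly into the pieces $N_k = S(P) \cap Y_{i_k}$.

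Now I recover $P$ from $S$. Since $S \subseteq S(P)$ is infinite, it meets infinitely many blocks; let $Y_i$ be any block with $S \cap Y_i \neq \emptyset$, so $i = i_k$ for some (unique, by distinctness) $k$, and I want to pin down $k$ and the earlier moves. Consider the truncated sequence $(N_1, \ldots, N_{k-1})$: it satisfies (a) each $N_j$ is a non-empty finite subset of $Y_{i_j}$ with $i_j \neq i$, (b) the $i_j$ are pairwise distinct, and (c) $F(N_1, \ldots, N_{k-1}) = Y_i$. By the uniqueness in clause $(2)$, \emph{this sequence is the only one with these three properties}. So the entire finite history $(O_1, N_1, \ldots, O_{k-1}, N_{k-1}, O_k)$ is determined by $i$ alone (and hence by $S$, once we know $i$ indexes a block met by $S$). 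Letting $i$ range over the infinitely many blocks met by $S$, the corresponding histories are nested initial segments of a single play (any two are comparable, being both initial segments of $P$ and determined by $F$), and their union is an infinite play; since $N_j = S(P) \cap Y_{i_j}$ and we also know $N_j = S \cap Y_{i_j}$ whenever $S$ meets $Y_{i_j}$... wait — here lies the one genuine subtlety.

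The main obstacle is exactly this: $S$ need not equal $S(P)$, so a priori $S$ might miss some block $Y_{i_k}$ entirely (if $N_k \not\subseteq S$), and then the above argument only recovers the moves up to the blocks $S$ does meet. I would handle this by noting that the recovery is \emph{monotone and coherent}: the blocks $Y_{i_1}, Y_{i_2}, \ldots$ occur in the order dictated by $F$, and knowing $i_k$ determines not just $O_k$ but the entire prefix $N_1, \ldots, N_{k-1}$ via uniqueness; since $S$ meets infinitely many $Y_{i_k}$, we recover $O_k$ (hence $N_1,\dots,N_{k-1}$, hence all of $N_1, N_2, \dots$) for arbitrarily large $k$, so every move $N_j$ is recovered. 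Concretely: $N_j$ is determined as follows — pick any $k > j$ with $S \cap Y_{i_k} \neq \emptyset$; the uniqueness in $(2)$ applied to $i_k$ hands us the full sequence $(N_1,\dots,N_{k-1})$, in particular $N_j$. Since $S$ is infinite it meets infinitely many blocks, so such $k$ exists for every $j$. Hence $(N_j)_{j \in \omega}$, and therefore $(O_j)_{j \in \omega}$, and therefore $P$, is uniquely determined by $S$. \qquad$\square$
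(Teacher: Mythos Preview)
Your proof is correct and takes a genuinely different route from the paper's. The paper argues by contraposition: it shows that for any two distinct plays $P,P'$ in which \1 uses $F$ one has $S(P)\cap S(P')$ finite --- locating the least $k$ with $N_k\ne N'_k$, using property~$(2)$ to get $F(N_1,\dots,N_k)\ne F(N'_1,\dots,N'_k)$ and hence $N_{k+1}\cap N'_{k+1}=\emptyset$, and then propagating the disjointness so that $S(P)\cap S(P')=N_1\cup\dots\cup N_{k-1}$. You instead reconstruct $P$ directly: for each block $Y_i$ met by $S$, clause~$(2)$ hands you the entire prefix of $P$ up to the inning in which \1 plays $Y_i$, and since $S$ meets infinitely many blocks these prefixes have unbounded length. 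The paper's argument is shorter and isolates the explicit byproduct $|S(P)\cap S(P')|<\aleph_0$, which is exactly the form invoked later in the proof of Fact~\ref{prodtight}; your argument is more constructive and makes the decoding of $P$ from $S$ explicit. One cosmetic point: when you write ``pick any $k>j$ with $S\cap Y_{i_k}\ne\emptyset$'' you are referring to the sequence $(i_k)_k$, which depends on $P$; this is harmless in a uniqueness proof (you are allowed to use that $P$ exists), but you could also avoid it by observing that the length of the prefix produced by clause~$(2)$ for a given $i$ already determines the inning number.
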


\begin{proof} The key point is that, if $P$ and $P'$ are
distinct plays in which  $\1$ uses $F$, then $S(P)\cap S(P')$
must be
finite. Let $P=(O_1, N_1, O_2,N_2 \ldots)$ and
$P'=(O'_1,N'_1,O'_2,N'_2, \ldots)$, and let $k$ be the least
integer
for which $N_k\ne N'_k$.  Taking into account property $(2)$ of
$F$, we have $F(N_1, \ldots,N_k)\ne F(N'_1, \ldots,N'_k)$ and
consequently  $N_{k+1}\cap N'_{k+1}=\emptyset $.  Next, we  see
that the  sets $F(N_1,\ldots, N_k)$, $F(N_1, \ldots, N_{k+1})$,
$F(N'_1, \ldots, N'_k)$, $F(N'_1, \ldots, N'_{k+1})$ are mutually
distinct, and consequently   the  sets $N_{k+1}, N_{k+2},
N'_{k+1}, N'_{k+2}$ are pairwise disjoint.  By  continuing to
argue in this
manner, we  conclude that $S(P)\cap S(P')=N_1\cup\cdots \cup
N_{k-1}$.
\end{proof}

\begin{lemma}
\label{3}
For any open set $U\subseteq X$ with $p\in
U$ there exists a countable set $I'\subseteq I$ and pairwise
disjoint
elements $A_i\in \mathcal C_i$ satisfying $A_i\subseteq U$ for
$i\in
I'$.
\end{lemma}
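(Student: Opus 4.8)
The plan is to use the structure of the space $X$ and the $\pi$-network hypothesis to build the desired countable family of pairwise disjoint sets by a diagonalization argument. First I would observe that since $\bigcup_{i\in I}\mathcal{C}_i$ is a $\pi$-network at $p$ and $U$ is an open neighbourhood of $p$, there is at least one $i\in I$ and some $A_i\in\mathcal{C}_i$ with $A_i\subseteq U$. The key point is that we cannot have $p\in A_i$ (we assumed $p\notin\bigcap\mathcal{C}_i$, but more importantly each $A_i$ that avoids a suitable basic neighbourhood of $p$ is a subset of $\omega$, hence we must argue that we can always find such sets below a prescribed neighbourhood). Since each $A_i\in[X]^{\le\aleph_0}$ is closed and $p\notin\bigcap\mathcal{C}_i$, for each $i$ we may pick $A_i\in\mathcal{C}_i$ with $p\notin A_i$, and then $A_i\subseteq\omega$. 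The real content is producing \emph{infinitely many pairwise disjoint} such sets inside a single fixed $U$.

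The plan for the recursion: suppose we have already chosen indices $i_0,\dots,i_{n-1}\in I$ and sets $A_{i_j}\in\mathcal{C}_{i_j}$ with $A_{i_j}\subseteq U$ and pairwise disjoint. The finite union $A_{i_0}\cup\cdots\cup A_{i_{n-1}}$ is a finite subset of $\omega$, hence $U\setminus(A_{i_0}\cup\cdots\cup A_{i_{n-1}})$ is still an open neighbourhood of $p$ (removing finitely many isolated points does not affect neighbourhoods of $p$, since by the description of the topology every $V(G)$ minus a finite set of isolated points still contains a basic neighbourhood — in fact $V(G)\setminus\{m_1,\dots,m_r\}$ contains $V(G\cup\{P_1,\dots,P_r\})$ for suitable plays, or more simply one enlarges the finite set $G$). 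Since $\bigcup_{i\in I}\mathcal{C}_i$ is a $\pi$-network at $p$, there is some $i_n\in I$ and some $C\in\mathcal{C}_{i_n}$ with $C\subseteq U\setminus(A_{i_0}\cup\cdots\cup A_{i_{n-1}})$; and if necessary one replaces $C$ by an element $A_{i_n}\in\mathcal{C}_{i_n}$ avoiding $p$ — but since $C\subseteq\omega$ already, $C$ itself works. This $A_{i_n}$ is disjoint from all the previously chosen sets and contained in $U$, so the recursion continues. Setting $I'=\{i_n:n\in\omega\}$ (which may be taken to be countably infinite; repetitions among the $i_n$ can be avoided since distinct disjoint nonempty sets forbid reusing the same centred family once we've exhausted it, but even a finite $I'$ is allowed by the statement) gives the desired conclusion.

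The main obstacle I expect is the verification that deleting a finite subset of $\omega$ from an open neighbourhood of $p$ yields another open neighbourhood of $p$ — i.e., that the basic neighbourhoods $V(G)=X\setminus\bigcup_{P\in G}S(P)$ are cofinal even after removing finitely many isolated points. This should follow from the fact that each isolated point $m\in\omega$ lies in $S(P)$ for some play $P$ in which \1 uses $F$ (indeed $m\in Y_j$ for some $j$, and one can exhibit a play whose union of \2's moves contains $m$ — for instance the play where \2 responds trivially so that $S(P)=\{m\}$ or contains $m$), so that $V(G)\setminus\{m\}\supseteq V(G\cup\{P\})$. Care is needed because \2's finite sets $N_k$ are constrained only by $N_k\subseteq O_k$, and \1's moves are dictated by $F$; one must check that a play hitting a prescribed finite set of isolated points genuinely exists, which is a small combinatorial check using property $(1)$ of $F$ (every legal position has a well-defined response). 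Once that is in hand, the recursion above runs without further difficulty.
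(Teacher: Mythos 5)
Your overall plan --- recursively shrink $U$ by removing the previously chosen sets and reapply the $\pi$-network property --- is exactly the argument the paper intends (its own proof is a one-line appeal to the $\pi$-network property, the closedness of the members of each $\mathcal{C}_i$, and $p\notin\bigcap\mathcal{C}_i$). But two steps of your execution are wrong, and the second is a genuine gap. First, the sets $A_{i_j}$ need not be finite: the hypothesis is only that $\bigcup_{i\in I}\mathcal{C}_i\subseteq[X]^{\le\aleph_0}$, and since $X$ is countable this allows arbitrary infinite subsets of $X$. So your justification that $U\setminus(A_{i_0}\cup\cdots\cup A_{i_{n-1}})$ is still a neighbourhood of $p$ --- ``removing finitely many isolated points'' --- rests on a false premise, and your closing paragraph (the ``main obstacle'') addresses a non-issue: deleting finitely many points other than $p$ from a neighbourhood of $p$ is immediate from $T_1$, with no combinatorics about the plays $P$ needed. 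The correct reason the shrunken set is open is the one the paper set up and you never invoke: each $\mathcal{C}_i$ may be assumed to consist of \emph{closed} sets, so a finite union of previously chosen $A_{i_j}$'s is closed and, \emph{provided each $A_{i_j}$ avoids $p$}, its complement is an open neighbourhood of $p$.

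That proviso is where the real gap lies. The $\pi$-network property hands you \emph{some} $C\in\mathcal{C}_{i_n}$ with $C\subseteq U\setminus(\cdots)$, and since $p$ belongs to $U\setminus(\cdots)$ there is no reason why $p\notin C$; your dismissal ``since $C\subseteq\omega$ already, $C$ itself works'' is unsupported, and your fallback (replace $C$ by a member of $\mathcal{C}_{i_n}$ omitting $p$, which exists because $p\notin\bigcap\mathcal{C}_{i_n}$) fails because that member need not be contained in $U$, let alone disjoint from the earlier choices. The obstruction is not cosmetic: fix a proper open $U\ni p$, pick $m_0\in\omega\setminus U$, and set $\mathcal{C}_n=\{\{p,n\},\{n,m_0\}\}$ for $n\in\omega$. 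These are nonempty centred families of closed countable sets, $p\notin\bigcap\mathcal{C}_n=\{n\}$, and $\bigcup_n\mathcal{C}_n$ is a $\pi$-network at $p$; yet the only members contained in $U$ are the sets $\{p,n\}$ with $n\in U$, any two of which meet at $p$, so one cannot find even two disjoint members inside $U$ and the recursion halts after one step. Your proof therefore does not establish the lemma in the generality stated; some further idea (e.g.\ a preliminary reduction that lets one work with the traces $A\setminus\{p\}$, or a direct argument exploiting the specific structure of the space $X$) is required --- a point that the paper's own one-sentence proof likewise leaves unaddressed.
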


\begin{proof}
This follows easily from $\bigcup_{i\in I}\mathcal{C}_i$  being
a $\pi$-network of $X$  consisting of closed sets, together with
the fact that $p\notin \bigcap \mathcal{C}_i$ for each $i\in I$.
\end{proof}

\begin{proof}[Proof of Fact \ref{prodtight}]
Using  Lemma \ref{3}, take a countable
set $I'_0\subset I$ and  pairwise disjoint elements $A_i\in
\mathcal
C_i$ for $i\in I'_0$. If  for any $B_i\in \mathcal C_i$, $i\in
I'_0$,
we have $p\in \overline {\bigcup_{i\in I'_0}B_i}$, then we
stop. Otherwise, since the families $\mathcal{C}_i$ are centred,
we
may assume that  $p\notin \overline {\bigcup_{i\in I'_0}A_i}$.
Then we fix a finite set $G_0 $ of plays in which  \1 uses the
strategy $F$   such that
$\bigcup_{i\in I'_0}A_i \cap V(G_0)=\emptyset $. Applying again
Lemma \ref{3} to the open set $V(G_0)$,    we find a countable
$I'_1\subseteq I$
and a pairwise  disjoint family
$\{A_i\in \mathcal C_i:i\in I'_1\}$
such that $A_i\subseteq V(G_0)$ for
each $i\in I'_1$. If $I'_1$ is good for our purpose, then we
stop. Otherwise, we fix a  finite set $G_1 $ of plays in which \1
uses $F$
such that $\bigcup_{i\in I'_1}A_i\cap  V(G_1)=\emptyset $. Of
course,  we may choose the set $G_1$ disjoint from $G_0$.
Then, we continue  by working in the open set $V(G_0\cup G_1)$.
If the process never stops,  at the end we put
$I^*=\bigcup_{n\in\omega}I'_n$. We claim that  the countable set
$I^*$  satisfies condition $(b)$ of Theorem \ref{arh}. To this
end,
take $B_i\in \mathcal{C}_i$ for
each $i\in I^*$ and choose  a point $s_i\in A_i\cap B_i$ for each
$i\in I^*$. It suffices to check that $p\in \overline{\{s_i:i\in
I^*\}}$.  By contradiction, assume that  there is a finite
set $H$ of plays in which \1 makes use of $F$   such that $\{s_i:i\in
I^*\}\subseteq
\bigcup_{P\in H}S(P)$. Since  the set
$S_n=\{s_i:i\in I'_n\}$ is infinite,  there exists some $P_n\in
H$ such that  the set $S_n\cap S(P_n)$  is
infinite. Again by Lemma \ref{uniqdet}, we must have $P_n\in
G_n$.
   But,  according to our construction, the sets in
$\{G_n:n\in\omega\}$ are pairwise disjoint and consequently    the
plays $P_n$ should be
 mutually distinct. This is obviously impossible and we are
done.
\end{proof}

Fact \ref{prodtight} finishes the proof of Example \ref{ex.scheepers}.
\end{proof}

One of the nicest classes of productively countably tight spaces
is the class of the compact spaces of countable tightness 
\cite[Theorem 4]{malyhin}.
One could expect to have a positive answer to
Question \ref{q.1} in this class.
But, again, this is not the case.

\begin{example}
\label{leandro}
There exists a compact Hausdorff space of countable tightness $X$ and
a
point
$p\in
X$ such that $\1\uparrow\gone(\Omega_p,\Omega_p)$.
\end{example}

\begin{proof}
This is essentially a particular case of Proposition \ref{psi-space}.
Endow the set $\mbox{}^{\le\omega}\omega$ with the topology in
which
every point of $\mbox{}^{<\omega}\omega$ is isolated and basic
neighbourhoods of $f\in\mbox{}^\omega \omega$ are of the form
$\{f\}\cup\{f\upharpoonright j:j\in\omega\setminus k\}$ for
$k\in\omega$.
Let $X=\{p\}\;\dot\cup\;\mbox{}^{\le\omega}\omega$ be the
one-point
compactification of this space.
Let \1's first move in $\gone(\Omega_p,\Omega_p)$ be $\{(k):k\in
\omega\}$ and, in general, if \2 picks a point
$s\in\mbox{}^{<\omega}\omega$, let \1's move in the next inning
be
$\{s^\frown (k):k\in \omega\}\in \Omega_p$.
It is clear that this is a winning strategy for \1 in
$\gone(\Omega_p,\Omega_p)$,
since all of \2's moves lie on a single
branch of the tree $\mbox{}^{<\omega}\omega$.
\end{proof}

After the last two examples, an obvious further question emerges:

\begin{prob}
\label{q.2}
Can a compact space  of countable
tightness $X$ have a point $x$ such that $\1\uparrow
\gfin(\Omega_x, \Omega_x)$?
\end{prob}

Notice that Example \ref{leandro} cannot be used to get a
positive
answer to Problem \ref{q.2}. Indeed, the next observation shows
that even in compact spaces the games $\gone(\Omega_p,\Omega_p)$
and
$\gfin(\Omega_p,\Omega_p)$ can have a very distinct behaviour
(we will see further differences between games of this kind
in the next section).

\begin{prop}
\label{gtwo}
If $X$ is the compact space in Example \ref{leandro}, then
$\2\uparrow \gtwo(\Omega_p, \Omega_p)$.
\end{prop}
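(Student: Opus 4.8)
First I would recall the setup of Example~\ref{leandro}: $X$ is the one-point compactification of $\mbox{}^{\le\omega}\omega$, where the points of $\mbox{}^{<\omega}\omega$ are isolated, a branch $f\in\mbox{}^\omega\omega$ has basic neighbourhoods consisting of $f$ together with a cofinite set of its initial segments, and $p$ is the point at infinity. The key structural fact I would isolate at the outset is a description of $\Omega_p$: a set $A\subseteq X\menos\{p\}$ clusters at $p$ if and only if it is not captured by finitely many branch-neighbourhoods together with a finite set, which (since each branch-neighbourhood meets $\mbox{}^{<\omega}\omega$ in a set lying on a single branch) amounts to saying that $A\cap\mbox{}^{<\omega}\omega$ is not contained in the union of finitely many branches plus a finite set, OR $A$ contains infinitely many branches. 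In particular, if $A\in\Omega_p$ then for any finite set $B$ of branches, $A$ still meets $\mbox{}^{<\omega}\omega\menos\bigcup B$ in an infinite set, or $A$ has infinitely many branches of its own.

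The heart of the matter is to design a strategy for \2 in $\gtwo(\Omega_p,\Omega_p)$ that, unlike in $\gone$, is not forced onto a single branch. The plan is: whenever \1 plays $A_n\in\Omega_p$ at inning $n$, \2 uses their two allowed picks to select \emph{two nodes of $\mbox{}^{<\omega}\omega$ lying on incomparable branches} (or a node and a branch, as needed), doing so in a way that the selected nodes have strictly increasing lengths and their underlying branches are pairwise distinct across all innings. Concretely, \2 maintains a counter and at inning $n$ ensures the two nodes picked have length $\ge n$ and sit on branches not used before; this is possible because, by the cluster description above, $A_n$ either contains infinitely many distinct finite sequences with arbitrarily long common-incomparable extensions, or contains infinitely many branches from which long initial segments can be extracted — in either case two fresh, mutually incomparable long nodes can be found inside $A_n$. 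At the end, the set $C$ of all nodes \2 picked contains, for infinitely many branches $g$, initial segments $g\upharpoonright j$ of unboundedly many lengths... — actually the cleaner outcome is that $C$ is an infinite subset of $\mbox{}^{<\omega}\omega$ not contained in finitely many branches (since it meets infinitely many distinct branches), hence $C\in\Omega_p$ by the cluster description, so \2 wins.

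The step I expect to be the main obstacle is verifying that \2 can always find \emph{two incomparable} long nodes inside an arbitrary $A_n\in\Omega_p$ — a single long node is immediate, but the presence of a second one on a different branch is what makes $\gtwo$ genuinely more powerful than $\gone$ here, and it is exactly this point where a careful case analysis on whether $A_n$ has infinitely many branches of its own versus infinitely many finite sequences is required. A secondary technical point is the bookkeeping: I must phrase \2's strategy so that "branches already used" stays finite at each stage (it will, since only $2n$ nodes have been picked after $n$ innings, touching at most $2n$ branches) and so that the final set $C$ provably meets infinitely many branches; I would record this as a short lemma before defining the strategy, and then the winning condition follows directly from the cluster-set characterization of $\Omega_p$ established in the first paragraph.
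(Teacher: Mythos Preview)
Your strategy has a genuine gap at the point you flag as ``secondary''. You require \2 to pick, at each inning, two nodes of $A_n$ that are incomparable with \emph{all} previously picked nodes (``on branches not used before''), so that the full set $C$ of picks forms an antichain. But this is not always possible. For instance, if \2's first two picks are $(0)$ and $(1)$ and \1 then plays $A_1=\{(0,k):k\in\omega\}\cup\{(1,k):k\in\omega\}$, this set lies in $\Omega_p$ (it is an infinite antichain, hence not covered by finitely many branches together with a finite set), yet every one of its elements extends $(0)$ or $(1)$ --- so there is no ``fresh'' node at all. Your justification also rests on a conceptual slip: a finite sequence does not have a unique ``underlying branch'', so the phrase ``at most $2n$ branches have been used'' is not well-defined, and ``$C$ meets infinitely many distinct branches'' is trivially true of every infinite subset of $\mbox{}^{<\omega}\omega$ and does not by itself give $C\in\Omega_p$.

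The paper closes exactly this gap with a replacement trick. It maintains an antichain $\{s_1,\dots,s_n\}$ only as a \emph{subset} of \2's picks, not as the full set, and grows it by one each inning. If $A_{n+1}$ contains a point incomparable with every $s_i$, that point is adjoined. Otherwise --- as in the obstruction above --- some $s_i$ must have two incomparable extensions $a_1,a_2\in A_{n+1}$ (else $A_{n+1}$ would be covered by $n$ branches plus a finite set), and \2 plays $\{a_1,a_2\}$, replacing $s_i$ by $a_1,a_2$ in the maintained antichain. This replacement step is the missing idea in your plan.
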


\begin{proof}
  We will prove that  $\2$ can  reply to  $\1$ in such a
way that,  for each  $n$,
the set  of all the answers played by \2 in the first $n$ innings
includes a set $\{s_1, ..., s_n\}$   with the property that
no branch contains two elements of it.
Note that, in this way, $\2$ wins the game.

We will do so by induction. With no loss of generality, we
can assume that  \1 does not play a set containing  points
of $\w^{\w}$. If, in the first inning,   $\1$ plays $A_1$, then 
$\2$
chooses $\{s_1, s_2\} \subset A_1$ such that $s_1$ and $s_2$ are
not in
the same branch. Suppose that at the end of the $n$-th inning,
the set
of all answers of  $\2$ contains a set  $\{s_1, ..., s_n\}$ with
the prescribed property.  Let $A_{n + 1}$ be the move of  $\1$
in the inning $n+1$.  If there is a point in $A_{n + 1}$ that
lies in a
branch missing $\{s_1,..., s_n\}$, then  $\2$ chooses this point
together with some other one. In  the remaining case, Since $p$
is
in the closure of $A_{n+1}$,  there
is at least one $s_i$ and two incompatible  elements  $a_1, a_2
\in A_{n +1}$ such that   $s_i \subset a_1$ and $s_i\subset 
a_2$.   The
answer of  $\2$ in the $(n+1)$-th inning will be  just
$\{a_1,a_2\}$.
Observe that  every branch meets the set $\{s_j: j \neq i\} \cup
\{a_1, a_2\}$ in at most one point.
\end{proof}

In view of Theorem \ref{thm.1},  one may wonder what the 
real strength of the property
$\2\uparrow\gtwo(\Omega_x, \Omega_x)$ is.

\begin{prob}  \label{q.two}  Let $X$ be a space and $x\in X$.
Does $\2\uparrow \gtwo(\Omega_x, \Omega_x)$ imply that $X$ is
productively countably tight at $x$?
 \end{prob}

A  simple example of a countable space that is not productively
countably tight is $\omega\cup\{p\}\subseteq \beta\omega$, for 
an
arbitrary $p\in \beta\omega\setminus\omega$ \cite{aurichi-bella}.
Still in \cite{aurichi-bella}, it was further observed
that, if $p$ is a selective ultrafilter, then $\sone(\Omega_p,
\Omega_p)$ holds in $\omega\cup \{p\}$.  The possibility that
such a space
could provide a negative answer to Problem \ref{q.two} 
is ruled out by the next fact:

\begin{prop} [\cite{bella}, Proposition 3]  \label{double} Let
$X$
be a space and $x$ be a non-isolated point of $X$. If $\2\uparrow
\gfin(\Omega_x, \Omega_x)$, then  $x$ is in the
closure of two disjoint subsets of $X\setminus \{x\}$.
\end{prop}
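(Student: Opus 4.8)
Let $X$ be a space and $x$ a non-isolated point such that $\2\uparrow\gfin(\Omega_x,\Omega_x)$; we want to produce two disjoint subsets of $X\setminus\{x\}$ each having $x$ in its closure.

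The plan is to argue by contraposition: assume that $x$ is \emph{not} in the closure of two disjoint subsets of $X\setminus\{x\}$, and show that under this hypothesis \2 cannot have a winning strategy in $\gfin(\Omega_x,\Omega_x)$. The hypothesis says exactly that the trace filter $\{A\subseteq X\setminus\{x\}: x\in X\setminus\overline{(X\setminus\{x\})\setminus A}\}$ — equivalently, the collection of sets $A\subseteq X\setminus\{x\}$ with $x\notin\overline{(X\setminus\{x\})\setminus A}$ — is an \emph{ultrafilter} on $X\setminus\{x\}$: for every $B\subseteq X\setminus\{x\}$, either $x\in\overline B$ fails (so $x\in\overline{(X\setminus\{x\})\setminus B}$, since $x$ is non-isolated and hence in the closure of $X\setminus\{x\}$) or $x\in\overline{(X\setminus\{x\})\setminus B}$ fails, but not both. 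Call this ultrafilter $p$. Note $A\in\Omega_x$ (with $A\subseteq X\setminus\{x\}$) iff $A\in p$; indeed $x\in\overline A$ iff $x\notin\overline{(X\setminus\{x\})\setminus A}$ iff $A\in p$, where the first ``iff'' uses precisely that the two closures cannot both contain $x$.

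Now suppose, toward a contradiction, that $\sigma$ is a winning strategy for \2 in $\gfin(\Omega_x,\Omega_x)$. The key step is to let \1 play against $\sigma$ so as to force a contradiction with the ultrafilter property. Since $x$ is non-isolated, $X\setminus\{x\}\in\Omega_x$, so \1 may start with $A_0=X\setminus\{x\}$; \2 answers with a finite $F_0\subseteq A_0$. Since any finite set $F_0\subseteq X\setminus\{x\}$ satisfies $x\notin\overline{F_0}$ (the space is $T_1$ and $x$ is not isolated — actually $x\notin\overline{F_0}$ because $\{x\}$ is the intersection of its neighborhoods with $F_0$ finite and $x\notin F_0$), we have $F_0\notin p$, hence $A_0\setminus F_0\in p\subseteq\Omega_x$. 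Then \1 plays $A_1=A_0\setminus F_0$. Continuing this way — at stage $n$, \1 plays $A_n=(X\setminus\{x\})\setminus(F_0\cup\cdots\cup F_{n-1})$, which lies in $p$ because each $F_i$ is finite and $p$ is a (free) ultrafilter — we obtain a full play in which \2 uses $\sigma$ and yet $\bigcup_n F_n$ is a countable set all of whose finite initial segments were \emph{removed} from the sets \1 subsequently played. The remaining point is to check that $\bigcup_n F_n\notin\Omega_x$, i.e.\ that \2 loses: since $p$ is an ultrafilter and each $F_n$ is finite, $\bigcup_n F_n$ need not be in $p$ a priori, so one must be slightly more careful. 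The clean fix is to note that if $\bigcup_n F_n\in p$, then, since $p$ is an ultrafilter containing the complement-of-finite sets, one could instead have \1 play more cleverly; but in fact the simplest route is: a finite-to-one or injective bookkeeping shows that because \2's moves are \emph{finite} at each inning while \1 keeps deleting exactly those points, the set $\bigcup_n F_n$ meets each $A_n$ only in $F_n,F_{n+1},\ldots$, and one arranges by a standard diagonal argument (interleaving with a partition of a candidate member of $p$ into pieces not in $p$) that $x\notin\overline{\bigcup_n F_n}$.

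The main obstacle is precisely this last point: an ultrafilter is not $\sigma$-complete, so ``\1 deletes \2's finite moves'' does not by itself prevent $\bigcup_n F_n$ from landing in $p=\Omega_x$, and \2 would then win. The way around it — and the heart of Proposition~3 of \cite{bella} — is to have \1 \emph{not} simply delete $F_n$ but instead play inside a fixed set $B\in p$ that has been pre-partitioned into countably many pieces $B=\bigsqcup_k B_k$ with each finite union $B_0\cup\cdots\cup B_k\notin p$ (possible since $p$ is free), and at inning $n$ let \1 play $A_n=B\setminus(B_0\cup\cdots\cup B_{n})$ (still in $p$). Then $\bigcup_n F_n\subseteq B$ and, because $F_n\subseteq A_n\subseteq\bigcup_{k>n}B_k$, the union $\bigcup_n F_n$ is contained in a set of the form $\bigcup_n(\bigcup_{k>n}B_k$-part$)$ whose complement in $B$ still belongs to $p$ — so $\bigcup_n F_n\notin p=\Omega_x$ and \2 loses, contradicting that $\sigma$ was winning. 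Hence no such $\sigma$ exists, completing the contrapositive and the proof. I would write this up by first isolating the ``$p$ is an ultrafilter'' reduction as a one-line observation, then devoting the bulk of the argument to constructing \1's counter-play via the partition of $B$.
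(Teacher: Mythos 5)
Your reduction to the ultrafilter case is fine: under the negation of the conclusion, exactly one of $x\in\overline{A}$, $x\in\overline{(X\setminus\{x\})\setminus A}$ holds for each $A\subseteq X\setminus\{x\}$, so $p=\Omega_x\cap\mathcal{P}(X\setminus\{x\})$ is a free ultrafilter. The gap is in the counter-play you build for \1. Your ``fix'' has \1 play the \emph{predetermined} sequence $A_n=B\setminus(B_0\cup\cdots\cup B_n)$, and you claim that $\bigcup_nF_n\notin p$ because $F_n\subseteq\bigcup_{k>n}B_k$. That inference is false: a set meeting each $B_k$ in a finite set can perfectly well belong to $p$ (for a selective ultrafilter this is essentially the defining property -- one can always choose a selector through a partition into non-members that lands in the ultrafilter). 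Indeed, the paper points out that $\sone(\Omega_p,\Omega_p)$ holds in $\omega\cup\{p\}$ for $p$ a selective ultrafilter, so for \emph{any} fixed sequence of \1-moves there exist finite selections whose union is in $p$; and if $p$ is a P-point, \2 can beat any predetermined decreasing sequence of \1-moves by selecting inside a pseudo-intersection. So no single non-adaptive play by \1 -- and in fact no single play at all -- can produce the contradiction; the hand-waved ``standard diagonal argument'' is exactly the missing content.

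The idea that is actually needed is to run \2's strategy $\varphi$ along \emph{two (or a tree of) different plays simultaneously}. This is what the paper does in the proposition immediately following the one you are proving: index plays by nodes $s\in 2^{<\w}$, let \1's move at a node be the complement of everything \2 has answered at all previously treated nodes, and observe that the resulting answers $A_s$ are pairwise disjoint; each branch $g\in 2^{\w}$ then yields a set $B_g=\bigcup_k A_{g\upharpoonright k}\in\Omega_x$ (because \2 wins every play), and two distinct branches give $B_g\cap B_h$ finite. Removing that finite intersection (legitimate in a $T_1$ space) gives two disjoint members of $\Omega_x$ -- equivalently, in your formulation, two almost disjoint members of the filter $p$, which is impossible for a free ultrafilter. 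Your write-up needs this branching step; without it the argument does not close.
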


Here we will show a little more:  there   are actually countably
many disjoint sets that have $x$ in their closures. This fact
will follow from the next more general result.

\begin{prop}
Let $X$ be a space on which $\2\uparrow\gfin(\Omega_x, \Omega_x)$
for a non-isolated point $x$.
Then there is an almost disjoint family of cardinality
$2^{\aleph_0}$ of elements of $\Omega_x$.
\end{prop}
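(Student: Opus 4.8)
The plan is to fix a winning strategy $\sigma$ for $\2$ in $\gfin(\Omega_x,\Omega_x)$ and use it to grow a binary tree of partial plays; the members of the almost disjoint family will be the sets that $\2$ produces along the $2^{\aleph_0}$ infinite branches, and the construction will be arranged so that these sets split into pairwise disjoint pieces at every branching of the tree.

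First I would isolate a \emph{splitting lemma}: for every $R\in\Omega_x$ with $R\subseteq X\setminus\{x\}$ there are disjoint $R^0,R^1\subseteq R$, both in $\Omega_x$. This is just Proposition \ref{double} applied to the subspace $R\cup\{x\}$ of $X$: the point $x$ is non-isolated in $R\cup\{x\}$ because $x\in\overline R$, and $\sigma$ restricts to a winning strategy for $\2$ in $\gfin$ on that subspace, since whenever $\1$ plays subsets of $R\cup\{x\}$ that cluster at $x$, each finite set $\sigma$ tells $\2$ to select, and hence their union, stays inside $R\cup\{x\}$, and that union still clusters at $x$ because $\sigma$ wins on $X$. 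Thus Proposition \ref{double} yields disjoint subsets of $(R\cup\{x\})\setminus\{x\}=R$ that cluster at $x$, i.e. lie in $\Omega_x$.

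Next, by recursion on $t\in{}^{<\omega}2$ I would attach to $t$ a legal partial play $\pi_t$ of $|t|$ innings during which $\2$ has followed $\sigma$, together with a \emph{reservoir} $R_t\in\Omega_x$ with $R_t\subseteq X\setminus\{x\}$, so that $\pi_s$ is an initial segment of $\pi_t$ and $R_t\subseteq R_s$ whenever $s$ is an initial segment of $t$. Start with $\pi_\emptyset$ the empty play and $R_\emptyset=X\setminus\{x\}$ (which is in $\Omega_x$ as $x$ is non-isolated). Given $\pi_t$ and $R_t$, apply the splitting lemma to $R_t$ to get disjoint $R^0,R^1\in\Omega_x$ inside $R_t$; for $i\in\{0,1\}$ let $\1$'s move in inning $|t|+1$ be $R^i$, let $G^i\subseteq R^i$ be the finite set that $\sigma$ instructs $\2$ to select in reply, put $\pi_{t{}^{\frown}i}=\pi_t{}^{\frown}\langle R^i,G^i\rangle$, and set $R_{t{}^{\frown}i}=R^i\setminus G^i$. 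The one point requiring verification is that $R^i\setminus G^i$ is still in $\Omega_x$: since $G^i$ is a finite subset of $X\setminus\{x\}$ and $X$ is $T_1$ we have $x\notin\overline{G^i}$, so from $x\in\overline{R^i}=\overline{R^i\setminus G^i}\cup\overline{G^i}$ we get $x\in\overline{R^i\setminus G^i}$.

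Finally, each $b\in{}^{\omega}2$ determines an infinite play of $\gfin(\Omega_x,\Omega_x)$ in which $\2$ followed $\sigma$ throughout; writing $G^b_n$ for $\2$'s selection in inning $n$, the set $S_b=\bigcup_{n\in\omega}G^b_n$ lies in $\Omega_x$ because $\sigma$ is winning, and $S_b$ is infinite since $x\notin S_b$ and $x$ is non-isolated. If $b\ne b'$ and $k$ is least with $b(k)\ne b'(k)$, then $\pi_b$ and $\pi_{b'}$ coincide through inning $k$, while from inning $k+1$ on every selection of $\2$ along $b$ is contained in the piece $R^{b(k)}$ chosen at the node $b\upharpoonright k$ and every selection of $\2$ along $b'$ is contained in $R^{b'(k)}$ — these two sets being disjoint, using $R_{b\upharpoonright m}\subseteq R_{b\upharpoonright(k+1)}\subseteq R^{b(k)}$ for $m\ge k+1$ (and symmetrically for $b'$). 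Hence $S_b\cap S_{b'}\subseteq\bigcup_{i\le k}G^b_i$, a finite set, so $\{S_b:b\in{}^{\omega}2\}\subseteq\Omega_x$ is almost disjoint; since its members are infinite with pairwise finite intersections, $b\mapsto S_b$ is injective and the family has cardinality $2^{\aleph_0}$. I expect the only genuine difficulty to be precisely the one the reservoirs are designed to overcome — forcing $\2$'s responses to fan out into pairwise disjoint sets past each branch point, so that the $2^{\aleph_0}$ branches do not collapse to a small subfamily; everything else is bookkeeping.
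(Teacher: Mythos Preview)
Your proof is correct. Both you and the paper build a binary tree of partial plays in which $\2$ follows the fixed winning strategy and then take, for each branch $b\in{}^{\omega}2$, the union of $\2$'s selections along $b$; the difference lies in how disjointness past the branching point is secured.

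The paper does \emph{not} invoke Proposition~\ref{double}. Instead it fixes an enumeration $\{s_n:n\in\omega\}$ of ${}^{<\omega}2$ compatible with the tree order and, at node $s_n$, has $\1$ play $Q_{s_n}=X\setminus(A_{s_0}\cup\cdots\cup A_{s_{n-1}}\cup\{x\})$, the complement of \emph{all} of $\2$'s earlier responses across the whole tree (not just along the current branch). This forces the finite sets $A_{s_n}=\varphi(Q_{s_n\upharpoonright 0},\dots,Q_{s_n})$ to be pairwise disjoint globally, so two distinct branches meet exactly in the finitely many $A$'s attached to their common stem. Your route instead uses Proposition~\ref{double} at each node to split the current reservoir into two disjoint pieces that serve as $\1$'s next moves at the two children; the nested reservoirs then guarantee that selections past the split stay on disjoint sides. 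Your argument is conceptually transparent and makes the role of the splitting phenomenon explicit, but it relies on Proposition~\ref{double} as a black box; the paper's argument is self-contained and slightly shorter, since the global complementation trick produces the required disjointness in one stroke. Either way, the same almost disjoint family of size $2^{\aleph_0}$ drops out.
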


\begin{proof}
Let $\{s_n : n \in \w \}$ be a one-to-one enumeration of
$2^{<\w}$
such that,  if $s_m \subseteq s_n$, then $m \leq n$.
Fix a winning strategy $\varphi$ for  $\2$ in $\gfin(\Omega_x,
\Omega_x)$.
Let $Q_{s_0} = X \setminus \{x\}$ and $A_{s_0} =
\varphi(Q_{s_0})$.
For each $n \in \w$,
define $Q_{s_n} = X \setminus (A_{s_0} \cup \cdots \cup A_{s_{n -
1}} \cup \{x\})$ and
$A_{s_n} = \varphi(Q_{s_n\upharpoonright 0}, \dots,
Q_{s_n\upharpoonright (\dom(s_n)-1)}, Q_{s_n})$.
Since $\varphi$ is a winning strategy,
the set
$B_g = \bigcup_{k \in \w} A_{g \upharpoonright k}$
is an element of $\Omega_x$ for each $g \in 2^\w$.
Note that, if $g , h\in 2^{\w}$ are distinct,
then $B_g \cap B_h = \bigcup_{j \le k_0} A_{g \upharpoonright j}
\in [X]^{<\aleph_0}$,
where $k_0 = \min\{k \in \w: g(k) \neq h(k)\}$.
\end{proof}

Recall that a space $Y$ is \emph{strongly Fr\' echet}
\cite{siwiec-sf} (or \emph{countably bisequential}
\cite{michael5})
at a point $y$
if for any decreasing sequence $(A_n)_{n\in \omega}$ of elements
of
$\Omega_y$  we may pick  points  $a_n\in A_n$ in such a way that
$(a_n)_{n\in\omega}$ is a sequence  converging  to $y$.

In general, a space productively countably tight at a point  $p$
need not be Fr\' echet at $p$, as Proposition \ref{psi-space} above
illustrates.

\begin{fact}
\label{stronglyfrechet}
The space $X$ in Example \ref{leandro} is strongly
Fr\' echet at $p$.
\end{fact}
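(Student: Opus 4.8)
The plan is to unwind the explicit description of the space $X$ from Example \ref{leandro} and verify the strong Fr\'echet property directly at $p$. Recall that $X = \{p\} \,\dot\cup\, \mbox{}^{\le\omega}\omega$, where $\mbox{}^{<\omega}\omega$ consists of isolated points, a branch point $f \in \mbox{}^\omega\omega$ has neighbourhoods $\{f\} \cup \{f\upharpoonright j : j \in \omega \setminus k\}$, and $p$ is the point at infinity of the one-point compactification, so that neighbourhoods of $p$ are complements of compact subsets of $\mbox{}^{\le\omega}\omega$. The first thing I would record is a concrete description of which sets cluster at $p$: since the compact subsets of $\mbox{}^{\le\omega}\omega$ are exactly those that meet only finitely many branches and whose intersection with $\mbox{}^{<\omega}\omega$ is contained, branch by branch, in a finitely-branching finite portion, a set $A \subseteq \mbox{}^{<\omega}\omega$ has $p \in \overline{A}$ precisely when $A$ is \emph{not} contained in any such compact set --- equivalently, $A$ has elements outside every ``finite subtree plus finitely many tails'' configuration.

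Next I would take a decreasing sequence $(A_n)_{n\in\omega}$ of elements of $\Omega_p$ and build a sequence $a_n \in A_n$ converging to $p$. Because we may shrink the $A_n$ (strong Fr\'echetness only gets easier for a finer sequence) and because points of $\mbox{}^\omega\omega$ are ``useless'' for clustering at $p$ in the sense that any neighbourhood of such a point is already a neighbourhood in the one-point-compactification topology away from $p$, I would first argue we may assume each $A_n \subseteq \mbox{}^{<\omega}\omega$. The key combinatorial step is then: since $A_0 \in \Omega_p$, $A_0$ is not covered by finitely many branches together with a finite subtree, so I can choose $a_0 \in A_0$; then, using that $A_1 \in \Omega_p$ and $A_1 \subseteq A_0$, I can choose $a_1 \in A_1$ lying on a branch different from the one through $a_0$ (or, more carefully, so that $\{a_0,a_1\}$ contains no two comparable elements and the $a_i$ are ``spreading out''), and inductively pick $a_n \in A_n$ so that the finite set $\{a_0,\dots,a_n\}$ still meets each branch in at most one point \emph{and} leaves room in $A_n$ to continue. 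The sequence so produced lies on infinitely many distinct branches and is almost injective, hence any neighbourhood of any branch point $f$ and any neighbourhood of any isolated point contains at most finitely many $a_n$; the only limit left is $p$, and since $\{a_n : n \in \omega\}$ escapes every compact subset of $\mbox{}^{\le\omega}\omega$, every neighbourhood of $p$ contains a tail of $(a_n)$. Thus $a_n \to p$.

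The main obstacle I anticipate is making the inductive selection precise enough that the resulting set genuinely meets every compact set only finitely --- one must ensure not merely that the $a_n$ lie on distinct branches but that for each fixed branch and each fixed level the chosen points eventually leave the corresponding basic neighbourhood, which is automatic once we know the branches are pairwise distinct and no $a_n$ repeats, but requires a clean statement of the ``clusters at $p$ $\iff$ escapes every compact set'' equivalence to quote. A secondary technical point is the reduction to $A_n \subseteq \mbox{}^{<\omega}\omega$: one should check that if $A_n$ contains branch points, replacing each branch point by a suitable cofinal tail of isolated points along it preserves membership in $\Omega_p$ and preserves the decreasing nesting (up to passing to a subsequence or re-indexing), after which the construction above applies verbatim. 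Once these two bookkeeping lemmas are in place the argument is routine, so I would keep the write-up short, stating the escape characterization as a preliminary observation and then giving the induction.
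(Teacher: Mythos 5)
Your overall strategy --- reduce to sets of isolated points and extract a selection meeting every branch at most finitely often --- is sound in outline, and your ``escape characterization'' of the neighbourhoods of $p$ is essentially correct (an infinite antichain in ${}^{<\omega}\omega$ does converge to $p$, since each branch is a chain and hence meets it at most once). But the inductive step, which you describe as routine bookkeeping, is where the actual difficulty lives, and as written it has a genuine gap: you never justify that the antichain $\{a_0,\dots,a_n\}$ can be extended inside $A_{n+1}$. The obstruction is that for $a\in{}^{<\omega}\omega$ the upward cone $\{s\in{}^{<\omega}\omega : a\subseteq s\}$ is itself a member of $\Omega_p$, so a set can cluster at $p$ while consisting entirely of points comparable with a previously chosen $a_i$. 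For instance, if $\emptyset\in A_0$ and you happen to pick $a_0=\emptyset$, or if $A_1=\{s : a_0\subseteq s\}$ for an unluckily chosen $a_0$, then $A_1$ contains no point incomparable with $a_0$ and the construction dies at the second step. So each $a_n$ must be chosen with foresight about the entire tail $(A_m)_{m>n}$, and the phrase ``leaves room in $A_n$ to continue'' is exactly the statement requiring proof. It can be proved --- for example, maintain the invariant that $A_m\setminus\bigcup_{i<n}\{s : a_i\subseteq s\}\in\Omega_p$ for every $m$, and use the two facts that the subsets of ${}^{<\omega}\omega$ \emph{not} in $\Omega_p$ (those covered by finitely many branches plus a finite set) form an ideal, and that incomparable elements have disjoint upward cones, to show that some admissible choice of $a_n$ preserves the invariant --- but that argument is the mathematical heart of a direct proof, not a bookkeeping lemma.

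Two smaller points. First, your reduction to $A_n\subseteq{}^{<\omega}\omega$ by ``replacing each branch point by a cofinal tail along its branch'' does not work as stated, because the replacement points need not belong to $A_n$ and the selection must satisfy $a_n\in A_n$; the correct dichotomy is that either every $A_n$ meets ${}^{\omega}\omega$ in an infinite set (in which case distinct branch points chosen from the $A_n$ already converge to $p$, since ${}^{\omega}\omega$ is closed and discrete in ${}^{\le\omega}\omega$ and therefore meets every compact set finitely), or some $A_{n_0}\cap{}^{\omega}\omega$ is finite and may be discarded. Second, for comparison: the paper sidesteps the decreasing-sequence difficulty entirely by invoking Siwiec's characterization that a regular space is strongly Fr\'echet at $p$ if and only if it is Fr\'echet at $p$ and satisfies $\sone(\Omega_p,\Omega_p)$; the latter comes for free from productive countable tightness, so the antichain extraction is performed only once, for a single $A\in\Omega_p$, where the ``getting stuck'' problem you face never arises.
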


\begin{proof}
It follows from (16)(b) in \cite{siwiec}
(see also \cite[Proposition 3]{arh-bella}) that
a regular space $Y$ is strongly Fr\' echet at a point $y$
if and only if
$Y$ is Fr\'echet at $y$ and
$\sone(\Omega_y,\Omega_y)$ holds in $Y$.
Thus, as the space $X$ is productively countably tight
and hence satisfies $\sone(\Omega_p,\Omega_p)$
\cite{aurichi-bella},
it suffices to check that $X$ is Fr\' echet at $p$.

Let then $A\subseteq\mbox{}^{\le\omega}\omega$ be such that
$p\in\overline{A}$. If $|A\cap\mbox{}^\omega \omega|\ge\aleph_0$,
then any injective function from $\omega$ into
$A\cap\mbox{}^\omega \omega$ is a sequence that converges to $p$.
We may then assume that $A\subseteq\mbox{}^{<\omega}\omega$.
Thus, in order to conclude that there is a sequence of
points of $A$ converging to $p$, it suffices to show that $A$
includes an infinite antichain,
since any injective function from $\omega$ onto
an antichain of $X$ is a sequence that converges to $p$.

Let
$Y=\{s\in A:\exists\, u,v\in A\;(s\subseteq u \;\&\; s\subseteq v
\;\&\; u\perp v)\}$.
We will consider two cases:

\underline{Case 1.}
$Y$ is finite.

It follows from the
definition of the set $Y$ that, for each $t\in A\setminus
Y$, there is $f_t\in\mbox{}^\omega\omega$ such that every $r\in
A$
with $t\subseteq r$ satisfies $r\subseteq f_t$. Note that the set
$A\setminus Y$ is the union of disjoint maximal chains. Since $Y$
is
finite and $p\in\overline A$, these maximal chains are infinitely
many. Therefore, by picking a point in each of these maximal
chains,
we get an infinite antichain.

\underline{Case 2.}
$Y$ is infinite.

If every chain included in $Y$ is finite, then
$\{\max_{\subseteq}(C):C$ is a maximal chain included in $Y\}$ is
an
infinite antichain. Let us then assume that $Y$ includes an
infinite
chain.
Thus, let $f\in\mbox{}^\omega\omega$ and
$D\in[\omega]^{\aleph_0}$ be
such that $\{f\upharpoonright j:j\in D\}\subseteq Y$. First let
$j_0=\min(D)$ and $s_0=f\upharpoonright j_0$. As $s_0\in Y$, we
can
pick $t_0\in A$ such that $s_0\subsetneqq t_0$ and $t_0\nsubseteq
f$. Now let $j_1=\min(D\setminus\mathrm{dom}(t_0))$ and
$s_1=f\upharpoonright j_1$, and pick $t_1\in A$ with
$s_1\subsetneqq
t_1$ and $t_1\nsubseteq f$. Let then
$j_2=\min(D\setminus\mathrm{dom}(t_1))$ and $s_2=f\upharpoonright
j_2$, and so forth. By proceeding in this fashion, we construct
an
infinite antichain $\{t_n:n\in\omega\}\subseteq A$.
\end{proof}

It is easy to check that a space $Y$ is strongly Fr\' echet at
$y$
if and only if for every family $\{A_n:n\in \omega\}\subseteq
\Omega_y$ we may pick points $a_n\in A_n$ in such a way that the
set $\{a_n:n\in \omega\}$ contains a subsequence converging to
$y$.  To require that the entire set $\{a_n:n\in \omega\}$ is a
sequence converging to $y$ is a much stronger  condition, which
is called \emph{strictly Fr\' echet at $y$} in \cite{g-n}.

Another interesting feature of the space $X$ in Example
\ref{leandro} is that it is strongly Fr\' echet at $p$, but
not strictly Fr\'echet at $p$.
This is an immediate consequence of the following corollary to
Theorem 1 of \cite{sharma} and Theorem 3.9 of \cite{gruen76}
-- see also Remark \ref{rmk} in the next section.

\begin{thm}[Galvin--Gruenhage--Sharma]
\label{thm.sharma}
Let $Y$ be a space and $y\in Y$.
If $Y$ is strictly Fr\'echet at $y$, then
$\1\not\,\uparrow\gone(\Omega_y, \Omega_y)$.
\end{thm}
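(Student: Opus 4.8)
The plan is to prove the contrapositive: assuming \1 has a winning strategy in $\gone(\Omega_y,\Omega_y)$, I would produce a decreasing sequence $(A_n)_{n\in\omega}$ of elements of $\Omega_y$ witnessing the failure of strict Fr\'echetness at $y$ --- that is, a decreasing sequence for which, for \emph{every} choice of $a_n\in A_n$, the set $\{a_n:n\in\omega\}$ fails to be a sequence converging to $y$. Actually, since the cleanest route uses the quantifier structure of strict Fr\'echetness directly, I would instead argue positively: fix a strategy $\sigma$ for \1 and show it is not winning, by using strict Fr\'echetness to have \2 defeat it.

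First I would set up a scheme that organizes \1's strategy $\sigma$ against all of \2's possible finite partial plays simultaneously. Let $\sigma(\emptyset)=A_{\emptyset}\in\Omega_y$; having defined, for a finite sequence $t=\pr{a_0,\ldots,a_{k-1}}$ with each $a_i$ a legal response, the set $A_t=\sigma(a_0,\ldots,a_{k-1})\in\Omega_y$, I would want to define a single nested sequence $(B_n)_{n\in\omega}$ of sets in $\Omega_y$ that ``dominates'' this tree. The natural candidate: enumerate the finite sequences of naturals (or use a bookkeeping argument) so that, roughly, $B_n=\bigcap\{A_t: t$ in the $n$-th batch$\}$; but these finite intersections of members of $\Omega_y$ need not lie in $\Omega_y$, so the honest approach is Galvin's classical one. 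Apply strict Fr\'echetness to the decreasing sequence obtained along a single cofinal branch and diagonalize; the precise tool here is essentially the argument of Theorem 1 of \cite{sharma} and Theorem 3.9 of \cite{gruen76}, which give exactly that strict Fr\'echetness at $y$ implies $\1\not\,\uparrow\gone(\Omega_y,\Omega_y)$, so in fact the statement follows by citing those two results combined --- as the paper itself indicates. For a self-contained sketch, though, I would recursively build a play $(A_0,a_0,A_1,a_1,\ldots)$ against $\sigma$ as follows: maintain an auxiliary decreasing sequence; at stage $n$, having $A_n=\sigma(a_0,\ldots,a_{n-1})$, intersect it with the ``tail control'' set $C_n$ coming from strict Fr\'echetness applied to $(A_0\cap C_0\supseteq A_1\cap C_1\supseteq\cdots)$, and let \2 play a point $a_n\in A_n$ lying in the appropriate neighbourhood dictated by the convergent sequence that strict Fr\'echetness guarantees.

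The main obstacle, and the place where care is genuinely needed, is the circularity just flagged: to invoke strict Fr\'echetness I need a \emph{decreasing} sequence of members of $\Omega_y$ fixed in advance, but the sequence $(A_n)$ produced by $\sigma$ depends on \2's moves $a_n$, which in turn I want to choose using the convergent sequence extracted from strict Fr\'echetness --- a chicken-and-egg problem. The standard resolution (Galvin's trick) is to run strict Fr\'echetness not on the $A_n$ themselves but on a cleverly chosen fixed decreasing sequence built from $\sigma$ by closing off under all of \2's \emph{potential} moves finitely far: one shows by a fusion/tree argument that there is a single sequence $D_0\supseteq D_1\supseteq\cdots$ in $\Omega_y$ such that $\sigma(a_0,\ldots,a_{n-1})\supseteq D_n$ whenever each $a_i\in D_i$ --- and the existence of such $D_n$ in $\Omega_y$ uses countable tightness (which strict Fr\'echetness implies) to replace the bad finite intersections by genuine members of $\Omega_y$. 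Once that fixed sequence is in hand, strict Fr\'echetness hands a convergent sequence $(d_n)\to y$ with $d_n\in D_n$, and \2 simply plays $a_n=d_n$; then $\{a_n:n\in\omega\}$ converges to $y$, hence lies in $\Omega_y$, so \2 wins against $\sigma$, contradicting that $\sigma$ was winning. I expect the bookkeeping of the fusion argument --- ensuring the $D_n$ are legitimately in $\Omega_y$ and simultaneously control the entire strategy tree --- to be the only nontrivial part; everything else is routine.
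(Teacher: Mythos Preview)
The paper gives no proof of this theorem; it simply records the statement as a corollary of Theorem~1 of \cite{sharma} and Theorem~3.9 of \cite{gruen76} (together with the Galvin-type duality between $\gone(\Omega_y,\Omega_y)$ and the neighbourhood--point game). So there is no argument in the paper against which to compare your sketch --- and you already note this yourself.

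Your self-contained sketch, however, breaks at precisely the step you flag as ``the only nontrivial part''. The fusion sequence $D_0\supseteq D_1\supseteq\cdots$ you describe need not exist: requiring $D_n\subseteq\sigma(a_0,\dots,a_{n-1})$ for \emph{every} $(a_0,\dots,a_{n-1})\in D_0\times\cdots\times D_{n-1}$ already forces $D_1\subseteq\bigcap_{a\in D_0}\sigma(a)$, an intersection over the \emph{infinite} set $D_0$, not a finite one --- countable tightness does nothing for you here. A clean obstruction: take $Y={}^{<\omega}\omega\cup\{y\}$ with the cofinite filter at $y$ (first countable, hence strictly Fr\'echet), and let $\sigma(\emptyset)=\{(n):n\in\omega\}$ and $\sigma(a_0,\dots,a_{k-1})=\{a_{k-1}{}^\frown(n):n\in\omega\}$. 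For any infinite $D_0\subseteq\sigma(\emptyset)$ the sets $\sigma(a)$, $a\in D_0$, are pairwise disjoint, so $D_1$ would have to be empty. The standard way through the circularity is not fusion but enumeration: use countable tightness to assume every move of $\sigma$ is countable, so that the tree of $\sigma$-legal finite positions is itself countable; list its nodes as $(t_n)_{n\in\omega}$ compatibly with extension; apply strict Fr\'echetness \emph{once} to the whole sequence $(\sigma(t_n))_{n\in\omega}$ to get $b_n\in\sigma(t_n)$ with $b_n\to y$; and let \2 follow the branch $a_k=b_{n_k}$, where $t_{n_k}$ is the current position. Then $(a_k)_k$ is a subsequence of $(b_n)_n$, hence converges to $y$, and \2 defeats $\sigma$.
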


It is worth remarking that the arguments concerning strong and
strict
Fr\'echetness of the space in Example \ref{leandro} apply equally
to
the space in Example \ref{ex.scheepers}.

Although Question \ref{q.1} has a negative answer even for
compact spaces of countable tightness, there is yet another
relevant class of spaces to consider.

\begin{prob}
\label{q.3}
Let $X$ be a space that is bisequential at a point $x$. Is it true
that $\1\not\,\uparrow\gone(\Omega_x,\Omega_x)$?
\end{prob}

Recall that a space $Y$ is \emph{bisequential} at a point $y$
\cite{michael5} if, for every filter base $\mathcal{F}$ on $Y$
that
accumulates in $y$, there is a countable filter base
$\mathcal{G}$
that converges to $y$ such that $F\cap G\neq\emptyset$ for all
$F\in\mathcal{F}$ and $G\in\mathcal{G}$.
In \cite{arh79}, it was shown that, if $Y$ is bisequential at
$y$,
then $Y$ is productively countably tight at $y$.

As an attempt to answer Problem \ref{q.3} in the negative, we
may
ask:

\begin{prob}
\label{bi?}
Is the space $X$ from Example \ref{leandro} bisequential at
$p$?
\end{prob}

\section{The differences between various games}

In this section, we concentrate on investigating the differences
that show up in several selective games related to countable
tightness
according to the number of points that player $\2$ is allowed to
select in each inning.

We have already seen (in Proposition \ref{gtwo}) that we obtain
a different game if $\2$ is allowed to pick two points per
inning instead of one.
Now we will extend this result not only to an arbitrary fixed quantity
of points, but also for a quantity that varies on the number of the
inning being played.

We first recall the following result from \cite[Section 3]{salvador}.
Here, for a function $f\in{}^{\omega}\NN$,
the notation $\sn{f}(\Omega_x,\Omega_x)$ stands for the following
property:
for every sequence
$(A_n)_{n\in\omega}$ of elements of $\Omega_x$, we can pick subsets
$F_n\subseteq A_n$ for $n\in\omega$ in such a way that
$|F_n|\le f(n)$ for each $n\in\omega$ and
$\bigcup_{n\in\omega}F_n\in\Omega_x$.
This is a natural generalization of $\sone(\Omega_x,\Omega_x)$,
hence a (formally) more general selective version of countable
tightness in combinatorial sense:
rather than picking one point out of each $A_n$, we can now select up
to $f(n)$ many points and, joining all of those points together
(for all $n\in\omega$), we must assemble a new element of the
family $\Omega_x$. The following result, together with Example
\ref{ex.salvador}, basically shows that, for an unbounded function
$f\in{}^{\omega}\NN$, $\sn{f}(\Omega_x, \Omega_x)$ is indeed a new
property --- but only one new property is given in this fashion, since
it does not matter which unbounded function is taken.

\begin{prop}[Garc\'ia-Ferreira--Tamariz-Mascar\'ua
\cite{salvador}]
\label{equiv}
Let $X$ be a space and $x\in X$.
\begin{itemize}
\item[$(a)$]
If $f\in{}^{\omega}\NN$ is bounded, then
$\sn{f}(\Omega_x,\Omega_x)$ is equivalent to
$\sone(\Omega_x,\Omega_x)$.
\item[$(b)$]
If $f,g\in{}^{\omega}\NN$ are both unbounded, then
$\sn{f}(\Omega_x,\Omega_x)$ is equivalent to
$\sn{g}(\Omega_x,\Omega_x)$.
\end{itemize}
\end{prop}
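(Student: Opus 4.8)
The plan is to reduce both parts to two elementary, well-known features of the family $\Omega_x$: it is closed under supersets in $X$, and it obeys the pigeonhole principle for finite unions, i.e.\ $B_0\cup\dots\cup B_{k-1}\in\Omega_x$ forces $B_i\in\Omega_x$ for some $i<k$ (because the closure operator commutes with finite unions). I would also dispose of the ``trivial half'' of each statement at the outset: whenever $f\le g$ pointwise, $\sn{f}(\Omega_x,\Omega_x)\Rightarrow\sn{g}(\Omega_x,\Omega_x)$, since a selection of at most $f(n)$ points is in particular one of at most $g(n)$ points; as every $f\in{}^{\omega}\NN$ satisfies $f(n)\ge 1$, this already gives $\sone(\Omega_x,\Omega_x)\Rightarrow\sn{f}(\Omega_x,\Omega_x)$. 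Hence in $(a)$ only the implication ``$\sn{f}(\Omega_x,\Omega_x)\Rightarrow\sone(\Omega_x,\Omega_x)$ for bounded $f$'' needs proof, and, by symmetry, in $(b)$ it suffices to prove ``$\sn{g}(\Omega_x,\Omega_x)\Rightarrow\sn{f}(\Omega_x,\Omega_x)$ whenever $f$ is unbounded'' --- where, as it turns out, no hypothesis on $g$ is used.

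For $(a)$, fix $k$ with $f(n)\le k$ for all $n$ and let $(A_n)_{n\in\omega}$ be a sequence in $\Omega_x$. Apply $\sn{f}(\Omega_x,\Omega_x)$ to obtain $F_n\in[A_n]^{\le f(n)}$ with $\bigcup_{n\in\omega}F_n\in\Omega_x$. Fix an enumeration $F_n=\{b_n^0,\dots,b_n^{|F_n|-1}\}$ of each $F_n$ and, for $i<k$, put $G_i=\{b_n^i:n\in\omega,\ i<|F_n|\}$; then $\bigcup_{n\in\omega}F_n=\bigcup_{i<k}G_i$, so by the pigeonhole property there is $i_0<k$ with $G_{i_0}\in\Omega_x$. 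Since $G_{i_0}$ arises by selecting (at most) one point from each $A_n$, the single-point selection defined by $a_n=b_n^{i_0}$ when $i_0<|F_n|$ and $a_n\in A_n$ arbitrary otherwise satisfies $\{a_n:n\in\omega\}\supseteq G_{i_0}$, hence $\{a_n:n\in\omega\}\in\Omega_x$ by superset-closure. This is $\sone(\Omega_x,\Omega_x)$.

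For $(b)$, suppose $f$ is unbounded; then $\{n\in\omega:f(n)\ge K\}$ is infinite for every $K$ (otherwise $f$ would be bounded). Thus one may build recursively an injection $m\mapsto n_m$ of $\omega$ into $\omega$ with $f(n_m)\ge g(m)$ for all $m$, choosing at stage $m$ some $n_m\in\{n:f(n)\ge g(m)\}$ different from $n_0,\dots,n_{m-1}$. Now given $(A_n)_{n\in\omega}$ in $\Omega_x$, apply $\sn{g}(\Omega_x,\Omega_x)$ to $(A_{n_m})_{m\in\omega}$ to get $H_m\in[A_{n_m}]^{\le g(m)}$ with $\bigcup_{m\in\omega}H_m\in\Omega_x$; set $F_{n_m}=H_m$ for each $m$ and $F_n=\emptyset$ for $n$ not of this form. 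Then $F_n\in[A_n]^{\le f(n)}$ for every $n$ (because $|H_m|\le g(m)\le f(n_m)$), and, the $n_m$ being distinct, $\bigcup_{n\in\omega}F_n=\bigcup_{m\in\omega}H_m\in\Omega_x$; so $\sn{f}(\Omega_x,\Omega_x)$ holds. Exchanging $f$ and $g$ gives the equivalence.

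The step I expect to be the real crux is arranging $(b)$ correctly. The naive approach --- re-index $(A_n)$ by a permutation of $\omega$ so that the available budget $g$ at each coordinate dominates the demand $f$ --- fails, because a bijection $\sigma$ of $\omega$ with $g(\sigma(n))\le f(n)$ for all $n$ need not exist (e.g.\ $g(m)=m+100$, $f(n)=n+1$). The way around it is that superset-closure of $\Omega_x$ makes it harmless to ``select $\emptyset$'' from the troublesome coordinates, so one only needs an injection $m\mapsto n_m$ landing in the coordinates where $f$ is large, and such an injection exists merely because $f$ is unbounded. The same principle --- that wasteful or degenerate selections cost nothing --- is exactly what legitimizes the ``fill in arbitrarily'' move in $(a)$.
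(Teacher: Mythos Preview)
Your argument is correct. Note, however, that the paper does not supply its own proof of this proposition: it is quoted from \cite{salvador} as a known result, so there is nothing in the paper to compare your approach against. Your argument is the standard one, and your observation that in $(b)$ the implication $\sn{g}(\Omega_x,\Omega_x)\Rightarrow\sn{f}(\Omega_x,\Omega_x)$ requires only that $f$ be unbounded (with no hypothesis on $g$) is a nice sharpening.
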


We can now consider, for a given $f\in{}^{\omega}\NN$, the game
$\gn{f}(\Omega_x,\Omega_x)$ naturally associated to the selective
property $\sn{f}(\Omega_x,\Omega_x)$.
In each inning $n\in\omega$ of this game, \1 chooses $A_n\in\Omega_x$,
and then \2 selects $F_n\subseteq A_n$ with $|F_n|\le f(n)$.
The winner is \2 if $\bigcup_{n\in\omega}F_n\in\Omega_x$,
and \1 otherwise.

For $k\in\NN$, we will write $\gn{k}(\Omega_x,\Omega_x)$ instead of
$\gn{\bar{f}_k}(\Omega_x,\Omega_x)$, where
$\bar{f}_k\in{}^{\omega}\NN$ is the constant function having range
$\{k\}$.

\begin{corol}  \label{gk->s1}  Let $X$ be a space  and $x\in X$.
  If  $\2\uparrow\gn{k}(\Omega_x,\Omega_x)$ on $X$ for some $k \geq
  1$, then $\sone(\Omega_x, \Omega_x)$ holds.
\end{corol}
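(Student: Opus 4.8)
The plan is to deduce $\sone(\Omega_x,\Omega_x)$ from the existence of a winning strategy for \2 in the finite-selection game $\gn{k}(\Omega_x,\Omega_x)$ by chaining together two facts already available in the excerpt. First, a winning strategy for \2 in any game trivially implies that \1 has no winning strategy in that game, which in turn implies the corresponding selection principle holds (a player can always realize a selection by following the play forced by an arbitrary \1-strategy). So $\2\uparrow\gn{k}(\Omega_x,\Omega_x)$ implies $\sn{\bar f_k}(\Omega_x,\Omega_x)$, i.e. $\sn{k}(\Omega_x,\Omega_x)$. Then, since the constant function $\bar f_k$ is bounded, Proposition \ref{equiv}(a) gives that $\sn{k}(\Omega_x,\Omega_x)$ is equivalent to $\sone(\Omega_x,\Omega_x)$, which is exactly the conclusion.

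Concretely, I would first argue the implication $\2\uparrow\gn{k}(\Omega_x,\Omega_x)\Rightarrow\sn{k}(\Omega_x,\Omega_x)$. Fix a winning strategy $\sigma$ for \2 and an arbitrary sequence $(A_n)_{n\in\omega}$ of elements of $\Omega_x$. Let \1 play $A_0,A_1,A_2,\dots$ in order (this is a legal play for \1 since each $A_n\in\Omega_x$), and let \2 respond according to $\sigma$, producing finite sets $F_n\subseteq A_n$ with $|F_n|\le k$. Because $\sigma$ is winning for \2, the resulting play is won by \2, so $\bigcup_{n\in\omega}F_n\in\Omega_x$. Since $(A_n)_{n\in\omega}$ was arbitrary, this is precisely the statement $\sn{\bar f_k}(\Omega_x,\Omega_x)=\sn{k}(\Omega_x,\Omega_x)$.

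Then I would invoke Proposition \ref{equiv}(a): the constant function $\bar f_k$ with range $\{k\}$ is bounded, so $\sn{k}(\Omega_x,\Omega_x)$ is equivalent to $\sone(\Omega_x,\Omega_x)$, completing the proof. I do not anticipate a genuine obstacle here: the corollary is essentially a one-line consequence of Proposition \ref{equiv}(a) together with the standard observation that a winning strategy for the second player in a selection game witnesses the underlying selection principle. The only point requiring a modicum of care is making explicit that letting \1 enumerate the given sequence $(A_n)_{n\in\omega}$ is a legitimate \1-play, so that \2's winning strategy can be applied to it; once that is noted, everything else is immediate.
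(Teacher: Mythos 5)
Your proof is correct and is exactly the intended derivation: the paper states this as an immediate corollary of Proposition \ref{equiv}(a), relying on the standard observation that a winning strategy for \2 in $\gn{k}(\Omega_x,\Omega_x)$, applied against \1 enumerating an arbitrary sequence from $\Omega_x$, witnesses $\sn{\bar f_k}(\Omega_x,\Omega_x)$, which is equivalent to $\sone(\Omega_x,\Omega_x)$ since $\bar f_k$ is bounded. Your direct argument (rather than the detour through $\1\not\,\uparrow$) is the cleaner of the two routes you sketch and is all that is needed.
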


The above corollary shows that, in general, $\2\uparrow
\gfin(\Omega_x, \Omega_x)$ is strictly weaker than $\2\uparrow
\gn{k}(\Omega_x, \Omega_x)$.  To see this, it suffices to observe
that  $\2\uparrow \gfin(\Omega_{\mathbf{0}},
\Omega_{\mathbf{0}})$ on $C_p(\RR)$
(see \cite[Theorem 3.6]{barman-dow} for a direct proof),
but $\sone(\Omega_{\mathbf{0}}, \Omega_{\mathbf{0}})$ fails for
$C_p(\RR)$ \cite[Theorem 1]{sakai}.
$C_p(\RR)$ also shows that $\2\uparrow \gfin(\Omega_x, \Omega_x)$
does not imply productive countable
tightness at $x$ (see \cite[Theorem 1]{uspenskii}).

The next result, which is a game version of Proposition
\ref{equiv},
is a first step towards drawing another line between the games
$\gn{k}(\Omega_x,\Omega_x)$ and $\gfin(\Omega_x,\Omega_x)$.

\begin{prop}
\label{equiv-games}
Let $X$ be a space and $x\in X$.
\begin{itemize}
\item[$(a)$]
If $f\in{}^{\omega}\NN$ is bounded, then the games
$\gn{f}(\Omega_x,\Omega_x)$ and $\gn{k}(\Omega_x,\Omega_x)$ are
equivalent, where $k=\lim\sup_{n\in\omega}f(n)\in\NN$.
\item[$(b)$]
If $f,g\in{}^{\omega}\NN$ are both unbounded, then the games
$\gn{f}(\Omega_x,\Omega_x)$ and $\gn{g}(\Omega_x,\Omega_x)$ are
equivalent.
\end{itemize}
\end{prop}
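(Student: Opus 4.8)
The plan is to show, for each part, that for each of the two players $P$, player $P$ has a winning strategy in one of the two games if and only if $P$ has one in the other; this amounts to four implications. I would begin with two routine reductions. Since $x$ may be taken to be non-isolated, and since a player that is ever handed (or plays) a set containing $x$ can be made to win immediately, one may assume that no set ever in play contains $x$; consequently adjoining finitely many of $\2$'s chosen points to a set never affects whether it lies in $\Omega_x$, and $\Omega_x$ is closed under supersets. Two facts will be used throughout: \textup{(i)} if $h\le h'$ pointwise, then a winning strategy for $\2$ in $\gn{h}(\Omega_x,\Omega_x)$ is also one in $\gn{h'}(\Omega_x,\Omega_x)$, and a winning strategy for $\1$ in $\gn{h'}(\Omega_x,\Omega_x)$ is also one in $\gn{h}(\Omega_x,\Omega_x)$ (a larger bound only enlarges $\2$'s set of legal moves, and the payoff set is unchanged); \textup{(ii)} changing $h$ on a finite set of innings changes no winner-status (copy the given strategy off the finitely many affected innings and play arbitrary singletons on them). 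By \textup{(ii)}, in part \textup{(a)} I would assume $f(n)\le k$ for every $n$, so that $S:=\{n:f(n)=k\}$ is infinite.

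The heart of the proof is a ``sparse routing'' lemma. Given $h_1,h_2\in{}^{\w}\NN$ and an infinite set $T=\{t_0<t_1<\cdots\}\subseteq\w$, I would prove: \textup{(R2)} if $h_2(t_i)\ge h_1(i)$ for every $i$, then $\2$ having a winning strategy in $\gn{h_1}(\Omega_x,\Omega_x)$ implies $\2$ has one in $\gn{h_2}(\Omega_x,\Omega_x)$; and \textup{(R1)} if $h_1(t_i)\ge h_2(i)$ for every $i$, then $\1$ having a winning strategy in $\gn{h_1}(\Omega_x,\Omega_x)$ implies $\1$ has one in $\gn{h_2}(\Omega_x,\Omega_x)$. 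For \textup{(R2)}: while playing $\gn{h_2}(\Omega_x,\Omega_x)$, $\2$ maintains a private run of $\gn{h_1}(\Omega_x,\Omega_x)$ in which $\1$'s $i$-th move is $\1$'s actual move at inning $t_i$ and $\2$ follows her winning strategy; at inning $t_i$ she plays the prescribed answer (legal, since $h_1(i)\le h_2(t_i)$), and at every inning outside $T$ she plays an arbitrary singleton, so the union she forms contains the private (winning) union and hence lies in $\Omega_x$. For \textup{(R1)}: while playing $\gn{h_2}(\Omega_x,\Omega_x)$, $\1$ maintains a private run of $\gn{h_1}(\Omega_x,\Omega_x)$ using his winning strategy $\sigma$, ``fast-forwarding'' through each private inning whose index is not of the form $t_i$ by letting the private second player answer with a singleton; the move $\sigma$ then dictates at private inning $t_i$ is what $\1$ plays at inning $i$, and $\2$'s response there, of size at most $h_2(i)\le h_1(t_i)$, is recorded as the private second player's move at inning $t_i$, so the union $\1$ permits in $\gn{h_2}(\Omega_x,\Omega_x)$ is a subset of the private (losing) union and hence misses $\Omega_x$.

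With \textup{(R1)}, \textup{(R2)} and \textup{(i)} available, both parts reduce to choosing $T$. For part \textup{(a)} I take $T=S$: since $f$ is identically $k$ on $S$, \textup{(R2)} with $(h_1,h_2)=(\bar f_k,f)$ gives ``$\2\uparrow\gn{k}\Rightarrow\2\uparrow\gn{f}$'' and \textup{(R1)} with $(h_1,h_2)=(f,\bar f_k)$ gives ``$\1\uparrow\gn{f}\Rightarrow\1\uparrow\gn{k}$'', while the two reverse implications are immediate from \textup{(i)} because $f\le\bar f_k$. For part \textup{(b)}, since $g$ is unbounded the set $\{m:g(m)\ge f(i)\}$ is infinite for every $i$, so I may choose $n_0<n_1<\cdots$ with $g(n_i)\ge f(i)$ and apply \textup{(R2)} with $(h_1,h_2)=(f,g)$ and $T=\{n_i:i\in\w\}$ to get ``$\2\uparrow\gn{f}\Rightarrow\2\uparrow\gn{g}$''; the reverse follows by interchanging $f$ and $g$, which is allowed since $f$ is unbounded as well. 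Similarly, since $f$ is unbounded I may choose $i_0<i_1<\cdots$ with $f(i_m)\ge g(m)$ and apply \textup{(R1)} with $(h_1,h_2)=(f,g)$ and $T=\{i_m:m\in\w\}$ to get ``$\1\uparrow\gn{f}\Rightarrow\1\uparrow\gn{g}$'', the reverse again following by symmetry.

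The one point I expect to require care is the bookkeeping in \textup{(R1)}: one must ensure that, through every fast-forwarded stretch, $\sigma$ is fed the genuine private history, so that the move it outputs at private inning $t_i$ really incorporates $\2$'s responses at the earlier indices $t_0,\dots,t_{i-1}$; this works precisely because those indices are encountered in increasing order and the intervening private innings carry only $\1$-controlled junk answers. One should also note that the junk answers never cause harm -- for $\2$ because extra selected points only enlarge a closure, for $\1$ because the junk points belong to the private union, which $\sigma$ is already keeping out of $\Omega_x$ -- and that every set presented during a simulation lies in $\Omega_x$, being either prescribed by a strategy or a superset of such a set; no intersection of members of $\Omega_x$ is ever taken. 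These checks, together with fact \textup{(ii)}, are routine.
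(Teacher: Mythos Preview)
Your proof is correct and follows essentially the same approach as the paper's: route the innings of one game through a sparse infinite subset of innings of the other, and fill the remaining innings with trivial moves. The only cosmetic differences are that you package the argument as a single ``sparse routing'' lemma applied uniformly to all four implications, and that you use singleton filler moves where the paper simply feeds the strategy the empty set at the skipped innings (which is a legal response since the rules require only $|F_n|\le f(n)$); both choices work, yours relying on the downward closure of $X\setminus\Omega_x$ rather than on exact equality of the two unions.
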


\begin{proof}

For $(a)$, we first note that a winning strategy for \2 in
$\gn{f}(\Omega_x,\Omega_x)$ is itself a winning strategy for \2 in
$\gn{k}(\Omega_x,\Omega_x)$, if \2 just ignores the finitely many
innings $n \in \omega$ in which $f(n)>k$.
For the converse, suppose that \2 has a winning strategy $\varphi$ in
$\gn{k}(\Omega_x,\Omega_x)$.
As the set $N = \{ n\in\omega : f(n)=k \}$ is infinite,
\2 can win an arbitrary play of $\gn{f}(\Omega_x,\Omega_x)$ by
skipping the innings $n \in \omega \setminus N$ and making
use of $\varphi$ in the innings $n \in N$
(considering, for the history of the play of
$\gn{k}(\Omega_x,\Omega_x)$ in which \2 applies $\varphi$,
only the innings that are in $N$).

We now deal with  \1.
Again, a winning strategy $\varphi$ for \1 in the game
$\gn{k}(\Omega_x,\Omega_x)$ is a winning strategy for \1 in
$\gn{f}(\Omega_x,\Omega_x)$, for \1 can pretend that the only valid
innings are the (cofinitely many) ones in
$\{ n\in\omega : f(n) \le k \}$
and making use of $\varphi$ in those innings.
Conversely, suppose that \1 has a winning strategy $\varphi$ in
$\gn{f}(\Omega_x,\Omega_x)$. Let $\{ n_i : i\in\omega \}$ be an
increasing enumeration of $\{ n\in\omega : f(n)=k \}$.
Now define a strategy for \1 in
$\gn{k}(\Omega_x,\Omega_x)$ as follows: in the inning $i\in\omega$, if
the play so far is $(A_0,F_0,\dots,A_{i-1},F_{i-1})$, \1's move is
$\varphi(G_0,G_1,\dots,G_{n_i-1})$, where $G_{n_j}=F_j$ for each $j<i$
and $G_m=\emptyset$ for all other values of $m$.
Since $\varphi$ is a winning strategy for \1 in
$\gn{f}(\Omega_x,\Omega_x)$, we have
$\bigcup_{i\in\omega}F_i=\bigcup_{m\in\omega}G_m\notin\Omega_x$;
therefore, this defines a winning strategy for \1 in
$\gn{k}(\Omega_x,\Omega_x)$.

For $(b)$, we must show that
\begin{itemize}
\item[$\cdot$]
$\2 \uparrow \gn{f}(\Omega_x,\Omega_x)$ implies
$\2 \uparrow \gn{g}(\Omega_x,\Omega_x)$;
\item[$\cdot$]
$\1 \uparrow \gn{f}(\Omega_x,\Omega_x)$ implies
$\1 \uparrow \gn{g}(\Omega_x,\Omega_x)$.
\end{itemize}

For the first implication, let $\varphi$ be a winning strategy for \2
in $\gn{f}(\Omega_x,\Omega_x)$, and let $(n_i)_{i\in\omega}$ be an
increasing sequence in $\omega$ such that, for each $i\in\omega$, we
have $g(n_i)\ge f(i)$.
Then \2 can produce a winning strategy in $\gn{g}(\Omega_x,\Omega_x)$
by playing along the innings in $\{n_i:i\in\omega\}$ only, making use
of $\varphi$ in those innings and ignoring the other innings.

Finally, for the second implication, let $\varphi$ be a winning
strategy for \1 in $\gn{f}(\Omega_x,\Omega_x)$.
Let $(m_i)_{i\in\omega}$ be an increasing sequence in $\omega$
such that, for each $i\in\omega$, we have $f(m_i)\ge g(i)$.
We can now define a strategy $\psi$ for \1 in
$\gn{g}(\Omega_x,\Omega_x)$
by setting
$\psi(F_0,F_1,\dots,F_{i-1})=\varphi(G_0,G_1,\dots,G_{m_i-1})$,
where $G_{m_j}=F_j$ for each $j<i$
and $G_n=\emptyset$ for $n\notin\{m_j:j<i\}$.
As in the last part of $(a)$, it follows that
$\bigcup_{i\in\omega}F_i=\bigcup_{n\in\omega}G_n\notin\Omega_x$,
whence $\psi$ is a winning strategy.
\end{proof}

With Propositions \ref{equiv} and \ref{equiv-games} in mind,
we henceforth adopt the following convention:
whenever we write $\sn{f}$ and $\gn{f}$, it should be understood that
$f\in {}^{\omega}\NN$ is unbounded.

Now we present all the variations in which we will be interested here.
It is immediate that, for every space $X$,
$p \in X$ and $k \in \NN$,
we have the following chain of implications:
$$\2 \uparrow \gn{k}(\Omega_p, \Omega_p) \Rightarrow
\2 \uparrow \gn{k + 1}(\Omega_p, \Omega_p) \Rightarrow
\2 \uparrow \gn{f}(\Omega_p, \Omega_p) \Rightarrow
\2 \uparrow \gfin(\Omega_p, \Omega_p)$$
In what follows, we will show that none of these
implications can be reversed in general.

\begin{example}
\label{CpR}
There is a topological space showing that
$\2\uparrow\gfin(\Omega_x, \Omega_x)$
does not imply
$\sn{f}(\Omega_x, \Omega_x)$.
\end{example}

\begin{proof}
This is witnessed by $C_p(\mathbb{R})$.
We have already remarked, right after Corollary \ref{gk->s1}, that
\2 $\uparrow\gfin(\Omega_{\mathbf{0}},\Omega_{\mathbf{0}})$ on $C_p(\mathbb{R})$.
On the other hand,
$C_p(\mathbb{R})$ does not satisfy
$\sn{f}(\Omega_{\mathbf{0}},\Omega_{\mathbf{0}})$
by Theorem 3.13 of \cite{salvador}.
\end{proof}

Another space satisfying the conditions in Example \ref{CpR}
can be found in Example 3.8 of \cite{salvador}.

\begin{example}\label{ex.salvador}
There is a space showing that
$\2\uparrow\gn{f}(\Omega_x, \Omega_x)$
does not imply
$\sn{1}(\Omega_x, \Omega_x)$.
\end{example}

\begin{proof}
This is witnessed by the following space,
described in Example 3.7 of \cite{salvador}.
Consider, on $X=(\omega\times\omega)\;\dot\cup\;\{p\}$, the topology
in which points of $\omega\times\omega$ are isolated and basic
neighbourhoods of $p$ are of the form
$V_H=X\setminus\bigcup_{h\in H}\{(n,h(n)):n\in\omega\}$, for $H$ a
finite subset of ${}^{\omega}\omega$.
It is clear that $X$ does not satisfy $\sn{1}(\Omega_p, \Omega_p)$,
since each set $C_n=\{(n,m):m\in\omega\}$ is in $\Omega_p$.
Note that a subset $A$ of $\omega\times\omega$ satisfies
$p\in\overline{A}$ if and only if
$\sup\{|A\cap C_n|:n\in\omega\}=\aleph_0$;
thus, we can obtain a winning Markov strategy for \2 in the game
$\gn{f}(\Omega_p,\Omega_p)$ on $X$ as follows:
in each inning $j\in\omega$, if $A_j\subseteq\omega\times\omega$ is
the set played by \1, let $n_j\in\omega$ be such that
$|A_j\cap C_{n_j}|\ge f(j)$, and then declare \2's move to be
$F_j\subseteq A_j\cap C_{n_j}$ with $|F_j|=f(j)$.
\end{proof}

We have seen in Proposition \ref{equiv} that the properties
$\sn{k}(\Omega_x, \Omega_x)$ for $k \in \NN$ are all equivalent.
We also have seen in Example \ref{leandro} that this is not the case
between $\gone(\Omega_x, \Omega_x)$ and $\gn{2}(\Omega_x, \Omega_x)$.
Now we will see that all of the games $\gn{k}(\Omega_x, \Omega_x)$
for $k\in\NN$ are distinct.

\begin{example}\label{ex.k.k1}
For each $k \in \NN$, there is a countable space $X_k$
with only one non-isolated point $p$ on which
$\1\uparrow\gn{k}(\Omega_p, \Omega_p)$ and
$\2\uparrow\gn{k + 1}(\Omega_p, \Omega_p)$.
\end{example}

\begin{proof}
This space is a variation of the space from Example \ref{ex.scheepers}.
Write $\omega=\dot\bigcup_{s\in{}^{<\omega}\omega}N_s$
with each $N_s$ infinite.
For each $s\in{}^{<\omega}\omega$,
fix a bijective enumeration $[N_s]^k = \{K^s_i:i\in\omega\}$.
Now consider, on $X_k=\omega\;\dot\cup\;\{p\}$,
the topology in which points in $\omega$ are isolated and
basic neighbourhoods of $p$ are of the form
$$
X_k\setminus\bigcup_{g\in\mathcal{G}}\bigcup_{j\in\omega}
K^{g\upharpoonright j}_{g(j)}
$$
for $\mathcal{G}\subseteq{}^{\omega}\omega$ a finite set.

It is clear that $\1$ has a winning strategy in the game
$\gn{k}(\Omega_p, \Omega_p)$ on $X_k$.
We will now see that $\2$ has a winning strategy in the game
$\gn{k+1}(\Omega_p, \Omega_p)$ on $X_k$.

\begin{defin}

A \emph{fat branch} is a subset of $X_k$ of the form
$\bigcup_{j\in\omega} K^{g\upharpoonright j}_{g(j)}$
for $g\in{}^{\omega}\omega$.

\end{defin}

The following is a simple but useful fact.

\begin{lemma}
\label{lemma.2}

Let $Y\subseteq\omega$ be such that $p\in\overline{Y}$ in $X_k$.
Then there is $B\subseteq Y$ with $|B| = k+1$
that cannot be covered with a single fat branch.

\end{lemma}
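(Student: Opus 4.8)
The plan is to argue by cases on how $Y$ meets the cells of the partition $\{N_s:s\in{}^{<\omega}\omega\}$. Throughout, write $B_h=\bigcup_{j\in\omega}K^{h\upharpoonright j}_{h(j)}$ for the fat branch determined by $h\in{}^{\omega}\omega$; the two facts I would use repeatedly are that $B_h\cap N_s$ equals the $k$-element set $K^s_{h(|s|)}$ if $s$ is an initial segment of $h$ and is empty otherwise, and that every point of $\omega$ lies in some fat branch (because each $[N_s]^k$ enumerates \emph{all} $k$-element subsets of the infinite set $N_s$, so given $y\in N_s$ one may pick $h$ with $h\upharpoonright|s|=s$ and $K^s_{h(|s|)}\ni y$). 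The latter fact shows that every finite subset of $\omega$ is covered by finitely many fat branches; in particular $p\in\overline{Y}$ forces $Y$ to be infinite.

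The easy case is when $|Y\cap N_s|\ge k+1$ for some $s$: then any $B\in[Y\cap N_s]^{k+1}$ works, since a fat branch meets $N_s$ in at most $k$ points. So I would then assume $|Y\cap N_s|\le k$ for every $s$ and set $T=\{s\in{}^{<\omega}\omega:Y\cap N_s\ne\emptyset\}$, which is infinite because $Y$ is. In this situation it is enough to find $y_1\in Y\cap N_{s_1}$ and $y_2\in Y\cap N_{s_2}$ with $s_1\ne s_2$ such that no single fat branch contains both $y_1$ and $y_2$: padding $\{y_1,y_2\}$ with $k-1$ further points of $Y$ (possible since $Y$ is infinite) then yields the required $B$ of size $k+1$.

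To produce $y_1,y_2$ I would split once more. If $T$ contains two incomparable nodes $s_1\perp s_2$, then arbitrary $y_i\in Y\cap N_{s_i}$ work, since a fat branch meeting both $N_{s_1}$ and $N_{s_2}$ would have to extend both $s_1$ and $s_2$. If instead $T$ is a chain, then — being infinite — its union $g=\bigcup T$ lies in ${}^{\omega}\omega$ and every element of $T$ is an initial segment of $g$. Consider $D=\{t\in T:Y\cap N_t\not\subseteq K^t_{g(|t|)}\}$. I claim $D$ is infinite: otherwise $\bigcup_{t\in D}(Y\cap N_t)$ would be finite and hence coverable by finitely many fat branches, while $\bigcup_{t\in T\setminus D}(Y\cap N_t)\subseteq B_g$, so $Y$ would be covered by finitely many fat branches, contradicting $p\in\overline{Y}$. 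Now pick $s_1,s_2\in D$ with $s_1$ an initial segment of $s_2$, and pick $y_i\in(Y\cap N_{s_i})\setminus K^{s_i}_{g(|s_i|)}$. If some $B_h$ contained both $y_1$ and $y_2$, then $s_1$ and $s_2$ would both be initial segments of $h$; since $s_2$ is also an initial segment of $g$ and $|s_1|<|s_2|$, this would give $h(|s_1|)=g(|s_1|)$, hence $y_1\in K^{s_1}_{h(|s_1|)}=K^{s_1}_{g(|s_1|)}$, contradicting the choice of $y_1$.

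The genuine obstacle is this last sub-case, where $T$ is a chain: there a fat branch covering a large portion of $Y$ is essentially forced to coincide with $g$, so one must first extract from $p\in\overline{Y}$ the infinitely many cells on which $Y$ overflows the $k$-set $K^t_{g(|t|)}$ prescribed by $g$, and then use the rigidity of the tree — choosing the two escaping points at different depths — to defeat every candidate covering branch. Everything else is bookkeeping around the two basic facts about fat branches recorded at the start.
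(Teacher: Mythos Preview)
Your proof is correct and follows essentially the same route as the paper's: both reduce to the case where $T=\{s:Y\cap N_s\ne\emptyset\}$ is a chain, form $g=\bigcup T\in{}^{\omega}\omega$, and locate points of $Y$ escaping the fat branch $B_g$ at suitable levels. The paper packages this as a proof by contradiction and splits into the sub-cases ``$S$ infinite'' and ``$S$ finite'' (your up-front disposal of the case $|Y\cap N_s|\ge k+1$ makes the finite sub-case vacuous, since then $T$ is automatically infinite); in the chain sub-case the paper gets by with a single point $n\in Y\setminus B_g$ paired with any $m\in Y$ lying at a strictly deeper level, whereas you show the set $D$ of escaping levels is infinite and use two of them --- a slightly heavier bookkeeping choice, but the same idea.
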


\begin{proof}

Suppose, by way of contradiction, that
\begin{equation}\tag{$\dagger$}
\textrm{each }
B\subseteq Y
\textrm{ with }
|B| = k+1
\textrm{ is included in a fat branch.}
\end{equation}
This implies, in particular, that the set
$S=\{s\in{}^{<\omega}\omega:Y\cap N_s\neq\emptyset\}$
is a chain in $\mbox{}^{<\omega}\omega$:
if $s,t\in S$ were incompatible,
we would contradict $(\dagger)$
by picking $m\in Y\cap N_s$ and
$n\in Y\cap N_t$
and then considering a set $B\subseteq Y$ with $|B| = k+1$ satisfying
$\{m,n\}\subseteq B$.

\emph{Case 1.}
$S$ is infinite.

Let $g=\bigcup S\in\mbox{}^{\omega}\omega$.
It follows from our hypothesis that
$Y\nsubseteq\bigcup_{j\in\omega}K^{g\upharpoonright j}_{g(j)}$.
Pick $n\in
Y\setminus\bigcup_{j\in\omega}K^{g\upharpoonright j}_{g(j)}$, and
let $s\in S$ be such that $n\in Y\cap N_s$.
Now pick $t\in S$ with $s\subsetneqq t$ and choose
$m\in Y\cap N_t$.
Then
we need two distinct fat branches in order to cover the set $\{m,n\}$,
since
$n\notin
K^{g\upharpoonright\dom(s)}_{g(\dom(s))}=K^{s}_{t(\dom(s))}$.
Again, this contradicts $(\dagger)$.

\emph{Case 2.}
$S$ is finite.

Let $u=\max S$. By the same reasoning applied in Case 1, for each
$s\in S$ with $s\subsetneqq u$ we must have $Y\cap N_s\subseteq
K^{u\upharpoonright\dom(s)}_{u(\dom(s))}$.
It cannot be the case that $|Y\cap N_u|\ge k+1$, since
a set $B\subseteq Y\cap N_u$ with $|B|=k+1$
would contradict $(\dagger)$.
Thus, there is $i\in\omega$ such that $Y\cap N_u\subseteq K^u_i$.
Now let $g\in\mbox{}^{\omega}\omega$ be such that $u\subseteq g$ and
$g(\dom(u))=i$.
Then
$Y\subseteq\bigcup_{j\in\omega}K^{g\upharpoonright j}_{g(j)}$, which
contradicts the hypothesis on $Y$.
\end{proof}

\begin{lemma}
\label{EUB}

\2 can play the game $\gn{k+1}(\Omega_p, \Omega_p)$ on $X_k$
in such a way that, after the inning $n\in\omega$,
the set of all of the points picked by \2 includes a subset $E_n$
with $|E_n|\ge n+k+1$
such that no fat branch contains more than $k$ points of $E_n$.

\end{lemma}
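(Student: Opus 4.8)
The plan is to build \2's play recursively, maintaining the stated invariant: after inning $n$ the points chosen by \2 contain a set $E_n$ with $|E_n|\ge n+k+1$ meeting every fat branch in at most $k$ points. To start, when \1 plays $A_0\in\Omega_p$, we have $p\in\overline{A_0}$, so by Lemma \ref{lemma.2} there is $B_0\subseteq A_0$ with $|B_0|=k+1$ not contained in any single fat branch; \2 plays $F_0=B_0$ (legal since $k+1$ points are allowed) and sets $E_0=B_0$. Since a set of size $k+1$ not contained in a fat branch meets every fat branch in at most $k$ points, the invariant holds for $n=0$.

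For the recursive step, suppose after inning $n$ the set $E_n$ has been secured, with $|E_n|\ge n+k+1$ and no fat branch containing more than $k$ of its points, and \1 now plays $A_{n+1}\in\Omega_p$. The idea is to add \emph{one} new point from $A_{n+1}$ to $E_n$ without creating a fat branch that holds $k+1$ of the enlarged set. The potential obstruction is a fat branch $\beta$ that already contains exactly $k$ points of $E_n$ and also contains every available point of $A_{n+1}$ — then adding any point of $A_{n+1}$ to that branch would push it to $k+1$. I would handle this by first applying Lemma \ref{lemma.2} to $A_{n+1}$: there is $B\subseteq A_{n+1}$ with $|B|=k+1$ not contained in a single fat branch, so the points of $B$ cannot all lie on any one $\beta$. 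Hence, for any fat branch $\beta$ that contains $k$ points of $E_n$, $B$ has a point off $\beta$; choosing such a point and using that there are at most... — more carefully, since $E_n$ meets each fat branch in $\le k$ points, the fat branches that are ``full'' (containing exactly $k$ points of $E_n$) and that would be violated are those containing a chosen new point; I pick from $B$ a single point $e$ together with enough of $B$ so that $F_{n+1}$ is a set of size $\le k+1$, set $E_{n+1}=E_n\cup\{e\}$, and verify the invariant: any fat branch $\beta$ with $|E_{n+1}\cap\beta|=k+1$ would have $|E_n\cap\beta|=k$ and $e\in\beta$, but then $B\setminus\{e\}$ (which has $k$ points) together with $e$... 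I instead argue directly that $e$ can be chosen in $A_{n+1}$ avoiding every fat branch that already meets $E_n$ in $k$ points — this is exactly where the ``not coverable by one fat branch'' strength of Lemma \ref{lemma.2} is used, since if every point of $A_{n+1}$ lay on such a branch one could, using that only finitely many fat branches meet the finite set $E_n$ in $k$ points and that $p\in\overline{A_{n+1}}$, extract a $(k+1)$-subset of $A_{n+1}$ inside one fat branch, contradicting the lemma.

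Carrying this out gives at each step $|E_{n+1}|=|E_n|+1\ge (n+k+1)+1=(n+1)+k+1$, and the fat-branch condition is preserved, so the invariant propagates through all innings $n\in\omega$. The main obstacle is precisely the selection step in the recursion: making the choice of the single new point $e\in A_{n+1}$ compatible with \emph{all} the finitely many fat branches that are already saturated by $E_n$, and seeing that the failure of such a choice would contradict Lemma \ref{lemma.2}. Once that is established the counting is immediate, and this lemma will be combined (in the text that follows) with the observation that a fat branch meeting $E_n$ in $\le k$ points, together with the fact that each basic neighbourhood of $p$ removes only finitely many fat branches, to conclude that $\bigcup_{n\in\omega}F_n\in\Omega_p$, i.e.\ that the strategy just described wins $\gn{k+1}(\Omega_p,\Omega_p)$ for \2 on $X_k$.
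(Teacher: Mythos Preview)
Your base case is correct and matches the paper. The inductive step, however, contains a genuine gap.

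The claim that ``only finitely many fat branches meet the finite set $E_n$ in $k$ points'' is false. For a fixed $k$-subset $D\subseteq E_n$ there can be uncountably many fat branches containing $D$: if, say, $D\subseteq N_\emptyset$, then $D=K^\emptyset_{i}$ for a unique $i$, and every $g\in{}^\omega\omega$ with $g(0)=i$ yields a fat branch containing $D$. Consequently the pigeonhole argument you sketch (cover $A_{n+1}$ by finitely many fat branches and contradict $p\in\overline{A_{n+1}}$) does not go through. Note also that finding a $(k+1)$-subset of $A_{n+1}$ inside a single fat branch would not contradict Lemma~\ref{lemma.2}, which only asserts the existence of \emph{some} $(k+1)$-subset not so covered.

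More fundamentally, the ``add one point'' strategy can fail outright. Take $E_n=\{a_0,\dots,a_k\}\subseteq N_\emptyset$ (this is a legitimate $E_0$, since any fat branch meets $N_\emptyset$ in exactly $k$ points), let $i_0$ be the index with $K^\emptyset_{i_0}=\{a_0,\dots,a_{k-1}\}$, and let \1 play $A_{n+1}=N_{(i_0)}\in\Omega_p$. For \emph{every} $e\in A_{n+1}$ there is a fat branch containing $K^\emptyset_{i_0}\cup\{e\}$ (take $g(0)=i_0$ and choose $g(1)$ with $e\in K^{(i_0)}_{g(1)}$), so $E_n\cup\{e\}$ violates the invariant no matter which single point $e$ you choose.

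The paper resolves this by allowing \2, in the hard case, to \emph{remove} a carefully chosen subset $D\subseteq E_n$ and replace it by a full $(k+1)$-set $B\subseteq A_{n+1}$ obtained from Lemma~\ref{lemma.2}, setting $E_{n+1}=(E_n\setminus D)\cup B$. The set $D$ is selected via a maximality argument (choosing the largest $d\le k$ for which a suitable $A^*\in\Omega_p$ exists), and the verification that no fat branch meets $E_{n+1}$ in more than $k$ points uses this maximality together with the auxiliary ``downward closure'' $E_n^{\downarrow}$. This removal-and-replacement mechanism is the essential idea missing from your proposal.
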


\begin{proof}

For $Y\subseteq\omega$, define
$Y^\downarrow=
\bigcup_{n\in Y}\bigcup_{j\in\dom(s_n)}
K^{s_n\upharpoonright j}_{s_n(j)}$,
where $s_n\in{}^{<\omega}\omega$ is such that $n\in N_{s_n}$.

We proceed by induction on $n$.
For the initial inning, let \2's move be the set $E_0$
given by Lemma \ref{lemma.2}.
Now suppose that, after the inning $n\in\omega$, the set of all of the
points picked by \2 includes a set $E_n$ with $|E_n|\ge n+k+1$ such that
each fat branch contains at most $k$ points of $E_n$,
and let $A_{n+1} \in \Omega_p$.

Let
$L = \{i \in \w:$ there is a fat branch containing both $i$ and some
element of $E_n\}$.
If $A_{n+1} \setminus L \in \Omega_p$,
let \2's move be $\{i\}$ for some $i\in A_{n+1} \setminus L$,
and declare $E_{n+1}=E_n\cup\{i\}$.
Let us then assume that $A_{n+1} \setminus L \notin \Omega_p$.
It follows that $A_{n+1} \cap L \in \Omega_p$;
furthermore, as $E_n {}^{\downarrow}$ is finite,
we may also assume that
$A_{n+1}\cap E_n {}^{\downarrow} =\emptyset$.

For each $D\subseteq E_n$ with $|D|\le k$, let
$Z_D = \{i \in \w:$ there is a fat branch that includes $\{i\}\cup D\}$.
Now let $d\le k$ be maximal such that,
for some $D\subseteq E_n$ with $|D|=d$, the set
$$
A^*=
\left( A_{n+1} \setminus \bigcup_{j=d+1}^{k} \;
\bigcup_{C\in[E_n]^j} Z_C \right)
\cap Z_D
$$
is an element of $\Omega_p$.
Note that such a $d$ must exist:
if there is no $D\subseteq E_n$ with $|D|=k$ such that
$A_{n+1} \cap Z_D \in \Omega_p$, then
$A_{n+1} \setminus \bigcup_{C\in[E_n]^{k}} Z_C$ must be in $\Omega_p$;
now, if there is no $D\subseteq E_n$ with $|D|=k-1$ such that
$(A_{n+1} \setminus \bigcup_{C\in[E_n]^{k}} Z_C) \cap Z_D \in \Omega_p$, then
$(A_{n+1} \setminus \bigcup_{C\in[E_n]^{k}} Z_C) \setminus \bigcup_{C\in[E_n]^{k-1}} Z_C$
must be in $\Omega_p$; and so forth.
This process must stop at some $d>0$ since
$A_{n+1} \cap L \in \Omega_p$.

Let $B$ be the set obtained by applying Lemma \ref{lemma.2}
to the set $A^*$.
We will show that, by letting \2's move in this inning be $B$,
the set $E_{n+1}=(E_n\setminus D)\cup B$ will satisfy the condition
required by the induction.

Let $G\subseteq \omega$ be a fat branch.
If $G\cap (E_n \setminus D) =\emptyset$, then
$G\cap E_{n+1} = G\cap B$ has no more than $k$ points.
If $G\cap B=\emptyset$, then the set $G\cap E_{n+1}$ is a subset of
$G\cap E_n$, hence it has no more than $k$ points.
Let us now deal with the case in which the sets
$G\cap(E_n\setminus D)$ and $G\cap B$ are both non-empty.

Let $i\in G\cap (E_n \setminus D)$ and $m\in G\cap B$ be arbitrary.
Since $A_{n+1}\cap E_n {}^{\downarrow} =\emptyset$,
it follows that $B \cap \{i\}^{\downarrow} =\emptyset$;
thus, as $m\in G\cap B$ and $i\in G$, it must be the case that
$i\in N_t$ for some $t\subseteq s$, where $s\in {}^{<\omega}\omega$ is
such that $m\in N_s$.
Let $g\in {}^{\omega}\omega$ be such that
$G=\bigcup_{j\in\omega} K^{g\upharpoonright j}_{g(j)}$, and let
$h\in {}^{\omega}\omega$ be such that the fat branch
$H=\bigcup_{j\in\omega} K^{h\upharpoonright j}_{h(j)}$
includes the set $\{m\}\cup D$.
As $m\in G\cap H$, it follows that $g\cap h \supseteq s$.
Let $j_0=\dom(t)$.

We claim that $t=s$.
Suppose, to the contrary, that $t\subsetneqq s$.
Then $i\in K^t_{g(j_0)} = K^t_{h(j_0)} \subseteq H$, which implies
that $H$ is a fat branch that includes $C=\{i\}\cup D$.
But then $m\in Z_C$, since $m$ is also an element of $H$.
This contradicts the fact that $m\in B\subseteq A^*$.

Thus, as $i$ and $m$ are arbitrary, we have proved that
$(G\cap(E_n\setminus D)) \cup (G\cap B) \subseteq N_t$.
This implies that $G\cap E_{n+1} \subseteq G\cap N_t = K^t_{g(j_0)}$,
which in turn yields $|G\cap E_{n+1}|\le k$, as required.
\end{proof}

The existence of a winning strategy for \2
in the game $\gn{k+1}(\Omega_p, \Omega_p)$ on $X_k$
is an immediate consequence of Lemma \ref{EUB}.
\end{proof}

We will now see a space in which all of the games considered in this
work are undetermined.

\begin{example}\label{ex.gruen}
There is a countable space with only one non-isolated point $p$
on which $\1\not\,\uparrow\gone(\Omega_p,\Omega_p)$ and
$\2\not\,\uparrow\gfin(\Omega_p,\Omega_p)$.
\end{example}

\begin{proof}

This is essentially the space from Example 2.11 of \cite{gruen06}.

Consider, on the set $T={}^{\le\omega}2$,
the topology generated by the base
$\{ \{s\} : s\in{}^{<\omega}2\} \cup \{V_s : s\in{}^{<\omega}2\}$,
where $V_s=\{t\in T : s\subseteq t\}$ for each $s\in{}^{<\omega}2$.
The subspace ${}^{\omega}2$ of $T$ is homeomorphic to the Cantor set;
let then $B\subseteq {}^{\omega}2$ be a Bernstein set
in ${}^{\omega}2$
(see e.g. \cite[Theorem 11.4]{jw1}).

We will now consider the space $X_B = ({}^{<\omega}2) \,\dot\cup\, \{p\}$
in which every $s\in{}^{<\omega}2$ is isolated and basic neighbourhoods
of $p$ are of the form
$U_F = X_B\setminus\bigcup_{f\in F}\{f\upharpoonright j:j\in\omega\}$
for $F\in[B]^{<\aleph_0}$.

\begin{remark}
\label{rmk}
By essentially the same argument from Theorem 1 of \cite{galvin},
\2 (resp. \1) has a winning strategy in the game
$\gone(\Omega_x, \Omega_x)$ played on $X$ if and only if
\1 (resp. \2) has a winning strategy in the game
$\mathsf{G}^c_{O,P}(X,x)$,
defined as follows.
In each inning $n\in\omega$, \1 chooses an open neighbourhood
$V_n$ of
$x$ in $X$, and then \2 picks a point $x_n\in V_n$.
The winner is \1 if $\{x_n:n\in\omega\}\in\Omega_x$, and \2 otherwise.
\end{remark}

\begin{prop}[Gruenhage \cite{gruen06}]
\label{1.not.g1}
$\2\not\,\uparrow\mathsf{G}^c_{O,P}(X_B,p)$.
\end{prop}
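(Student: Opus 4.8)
The plan is to show that player \2 has no winning strategy in the open-point game $\mathsf{G}^c_{O,P}(X_B,p)$ by the usual strategy-killing argument that exploits the Bernstein set $B$. By Remark \ref{rmk}, this is equivalent to $\1\not\,\uparrow\gone(\Omega_p,\Omega_p)$ on $X_B$, so proving it here will simultaneously yield the first half of Example \ref{ex.gruen}. Suppose, toward a contradiction, that $\sigma$ is a winning strategy for \2 in $\mathsf{G}^c_{O,P}(X_B,p)$. Recall how a play proceeds: \1 plays an open neighbourhood $V_n$ of $p$, i.e. a set of the form $U_F$ for some $F\in[B]^{<\aleph_0}$ (possibly shrunk further, but we may assume \1 always plays a basic neighbourhood); then \2 uses $\sigma$ to pick a point $x_n\in V_n\cap({}^{<\omega}2)$; and \2 wins iff $p\notin\overline{\{x_n:n\in\omega\}}$, equivalently iff there is a single $f\in B$ with $\{x_n:n\in\omega\}\subseteq\{f\upharpoonright j:j\in\omega\}$ --- wait, more carefully: \2 wins iff the set of chosen points is not in $\Omega_p$, i.e. iff it is covered by finitely many branches determined by elements of $B$. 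So to defeat $\sigma$, \1 must force the points $x_n$ to land, cofinally often, on branches \emph{not} indexed by $B$, or more precisely to spread out so that no finite subset of $B$ catches them all.

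The key step is a tree-of-strategies / fusion construction. I would build a binary tree of finite partial plays against $\sigma$, indexed by ${}^{<\omega}2$, together with an assignment $\tau\mapsto x_\tau\in{}^{<\omega}2$ of the point \2 plays, arranged so that: (i) along each path $f\in{}^{\omega}2$ the points $\{x_{f\upharpoonright n}:n\in\omega\}$ form an increasing chain converging to a genuine branch $b_f\in{}^{\omega}2$; (ii) distinct paths yield distinct branches, so $f\mapsto b_f$ is an injection of ${}^{\omega}2$ into ${}^{\omega}2$ whose image is a perfect set $P_\sigma\subseteq{}^{\omega}2$. The branching is obtained because at each node $\tau$, after \2's response $x_\tau$ is determined by $\sigma$ applied to the history, \1 has freedom in choosing the next basic neighbourhood $U_F$: by choosing two different (large enough, and branching) finite sets $F$ one forces \2's next point to extend $x_\tau$ in two incompatible directions. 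Since $B$ is a Bernstein set, $P_\sigma\cap B\neq\emptyset$ and $P_\sigma\setminus B\neq\emptyset$; in fact $|P_\sigma\setminus B|=2^{\aleph_0}$. Pick a path $f$ with $b_f\notin B$, indeed pick $\aleph_0$ many paths $f_0,f_1,\dots$ with distinct limit branches all outside $B$, and then diagonalize: play the run of $\mathsf{G}^c_{O,P}(X_B,p)$ that follows, in inning $n$, the branch $f_n$ (this requires interleaving finitely many of the constructed partial plays, which is legitimate because at each finite stage only finitely many branches have been committed to, so \1 can keep each $U_F$ a legal neighbourhood while steering \2's $n$-th point onto a new branch). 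The resulting sequence $\{x_n:n\in\omega\}$ then accumulates at infinitely many distinct branches, none of which lies in $B$; hence $\{x_n:n\in\omega\}$ is not covered by finitely many branches from $B$, so $p\in\overline{\{x_n:n\in\omega\}}$ and \2 loses this run --- contradicting that $\sigma$ was winning.

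The main obstacle, and the step requiring the most care, is the bookkeeping in the diagonalization: one must verify that at every finite stage of the constructed counter-run, \1's move is a \emph{legitimate} basic neighbourhood $U_F$ with $F\in[B]^{<\aleph_0}$ (so $F$ must consist of elements of $B$, not arbitrary branches), and that \2's response under $\sigma$ is genuinely forced to extend the intended finite string $x_\tau$. The point is that to push \2's next point past a prescribed level along a prescribed string $s\in{}^{<\omega}2$, \1 plays $U_F$ where $F$ is a finite subset of $B$ chosen to block all branches through the sibling $s'$ of $s$ up to that level --- possible precisely because $B$ is dense in ${}^{\omega}2$ (being a Bernstein set, it meets every perfect set, hence every $V_s$), so there are enough elements of $B$ above every node to carve out the desired neighbourhood. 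I would isolate this as a small claim: for every $s\in{}^{<\omega}2$ and every $m\geq\dom(s)$ there is $F\in[B]^{<\aleph_0}$ such that $U_F\cap({}^{<\omega}2)$ contains no proper extension of $s$ of length $<m$ other than those extending $s^\frown 0$ or $s^\frown 1$ appropriately --- and then the whole construction runs smoothly. This is exactly the Bernstein-set version of the classical Galvin argument (Theorem 1 of \cite{galvin}), adapted to the tree space $X_B$, which is why the proposition is attributed to Gruenhage \cite{gruen06}.
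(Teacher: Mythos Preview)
The paper does not supply its own proof of this proposition; it attributes the result to Gruenhage \cite{gruen06} and remarks that adapting his Example~2.11 argument to $X_B$ is straightforward. So there is no in-paper argument to compare against, and your attempt must be judged on its own merits.

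Your outline has the right flavour --- a tree of partial plays together with the Bernstein property of $B$ --- but the central mechanical claim is false. You assert that, by choosing suitable basic neighbourhoods $U_F$, player~\1 can force \2's response under $\sigma$ to extend the previous response $x_\tau$, and moreover to do so in a prescribed direction. But a move $U_F$ removes only the finitely many branches indexed by $F\in[B]^{<\aleph_0}$; the set $U_F\cap{}^{<\omega}2$ still contains strings in every cone $V_s$ (all but finitely many at each level) and in particular contains strings incomparable with $x_\tau$. Your proposed ``small claim'' --- blocking all short extensions of the sibling $s'$ --- is indeed achievable with finitely many $B$-branches, but it does nothing to prevent $\sigma$ from returning a \emph{long} extension of $s'$, or a string incomparable with $s$ altogether. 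There is therefore no reason the points $x_{f\upharpoonright n}$ should form an increasing chain, and the map $f\mapsto b_f$ is simply undefined.

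A secondary issue: the diagonalisation step is both unnecessary and ill-posed. Unnecessary because, had your chain construction worked, a \emph{single} path $f$ with $b_f\notin B$ would already yield a $\sigma$-run whose responses form an infinite subset of $\{b_f\upharpoonright j:j\in\omega\}$; by Lemma~\ref{closure.XB} this set lies in $\Omega_p$, so \1 wins outright. Ill-posed because one cannot ``interleave'' distinct runs against a fixed strategy: $\sigma$'s response at stage $n$ depends on the entire history of \1's moves, not on which branch of your auxiliary indexing tree you declare yourself to be following.

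What is missing is the observation that the \emph{winning} hypothesis on $\sigma$ itself supplies the needed structure. If $\sigma$ wins, then along every run the responses are covered by finitely many $B$-branches, hence accumulate at a point of $B$; \1 can then block that single branch (one element of $B$) and force $\sigma$ to accumulate elsewhere, and iterating this splitting produces a perfect subset of $B$, contradicting Bernstein. The argument does not attempt to steer \2's individual moves into cones --- which, as above, \1 has no power to do.
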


We should point out that, although the space $X_B$ in our construction
is not quite the same as the one exhibited in Example 2.11 of
\cite{gruen06}, adapting the proof in Gruenhage's paper to the space
$X_B$ is quite straightforward.

By Remark \ref{rmk}, Theorem \ref{1.not.g1} is equivalent to
the statement that $\1\not\,\uparrow\gone(\Omega_p,\Omega_p)$ on $X_B$.
Thus, in order to conclude that all of the games we are considering
are undetermined on $X_B$ at $p$, we must show that 
$\2\not\,\uparrow\gfin(\Omega_p,\Omega_p)$ on $X_B$.
In order to prove this, we will need a few auxiliary results.

\begin{lemma}
\label{closure.XB}

Let $A\subseteq{}^{<\omega}2$.
The following statements are equivalent:
\begin{itemize}
\item[$(a)$]
$p\in\overline{A}$ in $X_B$;
\item[$(b)$]
$A$ includes an infinite antichain or
there is $g\in({}^{\omega}2)\setminus B$ such that the set
$A\cap\{g\upharpoonright j:j\in\omega\}$ is infinite.
\end{itemize}

\end{lemma}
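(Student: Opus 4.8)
The plan is to reformulate condition $(a)$ combinatorially and then reduce the whole equivalence to a structural dichotomy for subsets of the binary tree. Write $[f]=\{f\upharpoonright j:j\in\omega\}$ for the set of initial segments of $f\in{}^{\omega}2$. Since $p\notin\bigcup_{f\in F}[f]$ for every finite $F\subseteq B$, unwinding the definition of the neighbourhood base $\{U_F:F\in[B]^{<\aleph_0}\}$ at $p$ shows that $(a)$ \emph{fails} precisely when $A\subseteq[f_1]\cup\dots\cup[f_n]$ for some $f_1,\dots,f_n\in B$; that is, $p\in\overline A$ iff $A$ is not contained in any finite union of branches indexed by points of $B$. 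So the task becomes: $A$ lies in a finite union of branches through $B$ iff $A$ neither contains an infinite antichain nor meets some branch $[g]$ with $g\in({}^{\omega}2)\setminus B$ in an infinite set.

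First I would record two easy ingredients. \emph{(i)} Since $B$ is a Bernstein set and each basic clopen set $\{g\in{}^{\omega}2:s\subseteq g\}$ is a perfect set, $B$ is ``dense'' in ${}^{\omega}2$ in the sense that every $s\in{}^{<\omega}2$ has an extension in $B$; consequently every finite subset of ${}^{<\omega}2$ is contained in a finite union of branches through $B$. \emph{(ii)} Each $[f]$ is a chain, so a union of $n$ branches contains no antichain of size $>n$; and if $f\neq g$ then $[f]\cap[g]$ is finite. From these the implication ``$\neg(a)\Rightarrow\neg(b)$'' is immediate: if $A\subseteq[f_1]\cup\dots\cup[f_n]$ with $f_i\in B$, then $A$ has no infinite antichain, and for any $g\notin B$ we have $g\neq f_i$ for all $i$, so $A\cap[g]\subseteq\bigcup_i([f_i]\cap[g])$ is finite.

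The substance is the converse, and I would isolate it as the key lemma: \emph{if $A\subseteq{}^{<\omega}2$ contains no infinite antichain, then $A$ is contained in a finite union of branches $[g_1],\dots,[g_n]$ with $g_i\in{}^{\omega}2$.} To prove it, pass to the downward closure $\widehat A$, a subtree of ${}^{<\omega}2$, and look at its splitting nodes (those $s$ with $s^\frown 0,s^\frown 1\in\widehat A$). If there are only finitely many splitting nodes, then $\widehat A$, hence $A$, is contained in a finite union of branches, which I would justify by a short induction on the number of splitting nodes (below the unique minimal splitting node $\widehat A$ is a single chain, and each side gives a subtree with strictly fewer splitting nodes). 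If there are infinitely many, let $S$ be the set of splitting nodes; by Ramsey's theorem for pairs, $S$ contains an infinite antichain — which lifts to an infinite antichain of $A$ by choosing, for each node of the antichain, an element of $A$ above it — or an infinite chain $s_1\subsetneq s_2\subsetneq\cdots$, in which case, letting $t_n$ be the child of $s_n$ not below $s_{n+1}$, the set $\{t_n:n\in\omega\}$ is an infinite antichain of $\widehat A$ (for $n<m$, $t_m\supseteq s_{n+1}\perp t_n$), again liftable to $A$. Both cases contradict the hypothesis, so $\widehat A$ has finitely many splitting nodes. Granting the lemma, ``$\neg(b)\Rightarrow\neg(a)$'' follows: assuming $A$ has no infinite antichain, write $A\subseteq[g_1]\cup\dots\cup[g_n]$; keep the branches with $g_i\in B$, and for $g_i\notin B$ note that $A\cap[g_i]$ is finite (the second half of $\neg(b)$) and hence covered by finitely many branches through $B$ by ingredient \emph{(i)}; altogether $A$ lies in a finite union of branches through $B$, i.e.\ $\neg(a)$.

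The only non-bookkeeping point — the main obstacle — is the key lemma, and within it the passage from ``infinitely many splitting nodes'' to ``infinite antichain'': the Ramsey dichotomy cleanly splits into the chain and antichain cases, but in the chain case one must be careful to peel off the correct side-children $t_n$ and check they are pairwise incomparable before lifting them into $A$. Everything else — the translation of $p\in\overline A$ into a covering statement, and the exploitation of the Bernstein property — is routine.
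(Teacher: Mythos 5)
Your proof is correct. It takes a genuinely different (and somewhat more modular) route than the paper's. The paper attacks the nontrivial direction $(a)\Rightarrow(b)$ head-on: assuming $p\in\overline{A}$ and that $A$ has no infinite antichain, it passes to the downward closure $A^{-}$, shows that $A^{-}$ has only finitely many infinite branches (an infinite family of them would produce an infinite antichain of ``first-divergence'' nodes), intersects with the basic neighbourhood $U_{C\cap B}$ to discard the branches lying in $B$, and extracts from what remains an infinite branch $[g]$ with $g\notin B$ meeting $A$ infinitely often. You instead isolate a stronger, purely combinatorial covering lemma --- a subset of ${}^{<\omega}2$ with no infinite antichain is contained in finitely many branches --- proved via the splitting-node/Ramsey dichotomy on the downward closure, and then reduce the topological statement to bookkeeping through the reformulation ``$p\notin\overline{A}$ iff $A$ is covered by finitely many $B$-branches'' together with the Bernstein property (every node of ${}^{<\omega}2$ extends to a point of $B$, so finite pieces are harmless). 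The two arguments share the same combinatorial heart (absence of infinite antichains confines $A$ to finitely many branches of its downward closure), but yours separates the combinatorics from the topology cleanly and makes the role of $B$ transparent, at the cost of also having to control the finite maximal chains in the covering lemma --- something the paper's direct construction of the witness $g$ sidesteps. The delicate points you flag (the Ramsey dichotomy on the splitting nodes, the extraction of the pairwise incomparable side-children $t_n$ in the chain case, and the lifting of antichains from $\widehat{A}$ back to $A$) are all handled correctly.
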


\begin{proof}

The implication $(b)\Rightarrow(a)$ is clear.
For the converse, suppose that $p\in\overline{A}$ in $X_B$
and that every antichain included in $A$ is finite.
Then the same holds for the set
$A^-=\bigcup_{s\in A}\{s\upharpoonright j:j\le\dom(s)\}\supseteq A$.
Since $A^-$ is an infinite subtree of ${}^{<\omega}2$,
it must have at least one infinite branch.

{\bf Claim.}
The set
$C=\{f\in{}^{\omega}2:\forall j\in\omega\;(f\upharpoonright j\in A^-)\}$
is finite.

For each $f\in C$, there must be some $j_f\in\omega$ such that
$\{s\in A^-:s\supseteq f\upharpoonright j_f\}=
\{f\upharpoonright j':j_f\le j'\in\omega\}$,
for otherwise $A^-$ would include an infinite antichain.
This implies that $\{f\upharpoonright j_f:f\in C\}\subseteq A^-$
is an antichain, whence $C$ must be finite.

Thus, as $U_{C\cap B}$
is an open neighbourhood of $p$ in $X_B$,
it follows that the set
$D=A^-\cap U_{C\cap B}$ is such that $p\in\overline{D}$ in $X_B$.
Now let
$E=\bigcup_{s\in D}\{s\upharpoonright j:j\le\dom(s)\}\subseteq A^-$.
Also the set $E$ is an infinite subtree of ${}^{<\omega}2$,
hence it must have an infinite branch -- say,
$\{g\upharpoonright j:j\in\omega\}$ for $g\in{}^{\omega}2$.
The procedure for obtaining $E$ from $A^-$ guarantees that
$g\notin B$.
As $\{g\upharpoonright j:j\in\omega\}\subseteq A^-$,
it follows from the definition of $A^-$ that
$\{j\in\omega:g\upharpoonright j\in A\}$ is infinite.
\end{proof}

\begin{lemma}
\label{nwd}

Let $R\subseteq B$ be such that,
for every $g\in({}^{\omega}2)\setminus B$,
there is $j\in\omega$ such that
no $f\in R$ extends $g\upharpoonright j$.
Then $R$ is nowhere dense in ${}^{\omega}2$.

\end{lemma}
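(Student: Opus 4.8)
The plan is to argue by contradiction: if $R$ were not nowhere dense in ${}^{\omega}2$, then $B$ would contain a nonempty perfect subset of ${}^{\omega}2$, which is impossible since $B$ is a Bernstein set.

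First I would restate the hypothesis on $R$ in topological terms. Every point of ${}^{<\omega}2$ is isolated in $T$, so ${}^{<\omega}2$ is open in $T$ and hence ${}^{\omega}2$ is closed in $T$; consequently the closure of $R$ is the same whether computed in $T$ or in ${}^{\omega}2$, and I will write it as $\overline{R}$. A point $g\in{}^{\omega}2$ belongs to $\overline{R}$ precisely when, for every $j\in\omega$, there is $f\in R$ with $g\upharpoonright j\subseteq f$ (i.e. $R$ meets $V_{g\upharpoonright j}$). The hypothesis says exactly that this fails for every $g\in({}^{\omega}2)\setminus B$, so $\overline{R}\subseteq B$.

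Next, suppose towards a contradiction that $R$ is not nowhere dense in ${}^{\omega}2$; then $\overline{R}$ has nonempty interior, so there is $s\in{}^{<\omega}2$ with $\emptyset\neq V_s\cap{}^{\omega}2\subseteq\overline{R}$. The set $V_s\cap{}^{\omega}2=\{t\in{}^{\omega}2:s\subseteq t\}$ is a clopen homeomorphic copy of ${}^{\omega}2$, hence a nonempty perfect subset of the Cantor set; by the previous paragraph it is contained in $B$.

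Finally I would invoke the defining property of the Bernstein set $B$: both $B$ and its complement meet every nonempty perfect subset of ${}^{\omega}2$, so $B$ can contain no such set. This contradicts the previous step, and therefore $R$ is nowhere dense. I do not expect any real obstacle here; the only points requiring a little care are the translation of the combinatorial hypothesis into the inclusion $\overline{R}\subseteq B$ and the (routine) observation that closures in $T$ and in ${}^{\omega}2$ coincide for subsets of ${}^{\omega}2$.
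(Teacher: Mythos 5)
Your proof is correct and is essentially the paper's argument in contrapositive form: the paper directly exhibits the dense open set $\bigcup_{g\in({}^{\omega}2)\setminus B}V_{g\upharpoonright j_g}\cap{}^{\omega}2$ disjoint from $R$ (using that $({}^{\omega}2)\setminus B$ is dense because $B$ is Bernstein), while you phrase the same content as $\overline{R}\subseteq B$ together with the fact that a Bernstein set contains no nonempty perfect set and hence no basic clopen set. Both arguments turn on exactly the same property of $B$, so this is the same approach up to presentation.
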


\begin{proof}

Fix, for each $g\in({}^{\omega}2)\setminus B$,
a $j_g\in\omega$ such that
$\forall f\in R\;(g\upharpoonright j_g\nsubseteq f)$.
Then
$$
\bigcup_{g\in({}^{\omega}2)\setminus B}
V_{g\upharpoonright j_g}\cap{}^{\omega}2
$$
is an open subset of ${}^{\omega}2$ which, furthermore,
is dense in ${}^{\omega}2$ --
since $({}^{\omega}2)\setminus B$ is dense in ${}^{\omega}2$.
It follows from the choice of the $j_g$s that
$R$ is disjoint from this dense open set.
Thus, $R$ is nowhere dense in ${}^{\omega}2$.
\end{proof}

\begin{defin}
\label{def.simple}

Let $X$ be a topological space and $x\in X$.
A family $\mathcal{C}$ of nonempty open subsets of $X$ is
\emph{simple at $x$}
if
every $A\subseteq X$ with $x\in\overline{A}$ includes a finite set
that intersects every element of $\mathcal{C}$.

\end{defin}

The following result parallels Theorems 2.10 and 2.11 of
\cite{barman-dow2},
and is inspired by Theorem 1 of \cite{sch95}.

\begin{prop}
\label{equiv.simple}

Let $X$ be a topological space and $x\in X$.
Consider the following statements:
\begin{itemize}
\item[$(a)$]
every local base for $X$ at $x$
is a countable union of simple families;
\item[$(b)$]
$\tau_x$ is a countable union of simple families;
\item[$(c)$]
there is a local base for $X$ at $x$
that is a countable union of simple families;
\item[$(d)$]
$\2\uparrow\gfin(\Omega_x,\Omega_x)$ on $X$.
\end{itemize}
Then $(a)\Leftrightarrow(b)\Leftrightarrow(c)\Rightarrow(d)$.
Moreover,
if $X$ is
countable,
then the four statements are equivalent.

\end{prop}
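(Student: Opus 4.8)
The plan is to prove the equivalences in the cycle $(a)\Leftrightarrow(b)\Leftrightarrow(c)$ first, then the implication $(c)\Rightarrow(d)$, and finally the reverse implication $(d)\Rightarrow(c)$ under the hypothesis that $X$ is countable. The first two equivalences should be almost formal. Clearly $(a)\Rightarrow(b)$ and $(a)\Rightarrow(c)$ fail to be immediate only because $\tau_x$ (the filter of open neighbourhoods of $x$) and an arbitrary local base are different families; but I would observe that if $\mathcal B$ is any local base at $x$ then every open neighbourhood of $x$ contains a member of $\mathcal B$, and a family that is simple at $x$ remains simple when we replace each of its members by a subset that is still a neighbourhood of $x$ — or, more carefully, when we enlarge members. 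So $(b)\Rightarrow(a)$: given $\tau_x=\bigcup_n\mathcal S_n$ with each $\mathcal S_n$ simple, and a local base $\mathcal B$, for each $U\in\mathcal B$ pick an $n(U)$ with $U\in\mathcal S_{n(U)}$ — wait, $U$ need not lie in $\mathcal S_{n}$; instead, for $U\in\mathcal B\subseteq\tau_x$ there is $n$ with $U\in\mathcal S_n$, and then $\mathcal B=\bigcup_n(\mathcal B\cap\mathcal S_n)$, and each $\mathcal B\cap\mathcal S_n$ is simple because a subfamily of a simple family is simple. This gives $(b)\Rightarrow(a)$, and $(a)\Rightarrow(c)$ is trivial once we note a local base exists; $(c)\Rightarrow(b)$: if $\mathcal B=\bigcup_n\mathcal S_n$ is a local base that is a countable union of simple families, define for each open $V\ni x$ the ``witness index'' by choosing some $U_V\in\mathcal B$ with $U_V\subseteq V$; then enlarging each member of a simple family to a larger open neighbourhood of $x$ preserves simplicity (the finite set meeting every $U\in\mathcal S_n$ also meets every superset), so $\{V\in\tau_x: U_V\in\mathcal S_n\}$ is simple for each $n$ and these cover $\tau_x$.

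For $(c)\Rightarrow(d)$: fix a local base $\mathcal B=\bigcup_{n\in\omega}\mathcal S_n$ at $x$ with each $\mathcal S_n$ simple, and arrange (by replacing $\mathcal S_n$ with $\bigcup_{m\le n}\mathcal S_m$) that the $\mathcal S_n$ are increasing. I would describe the winning strategy for \2 in $\gfin(\Omega_x,\Omega_x)$ as follows: when \1 plays $A_n\in\Omega_x$ in inning $n$, simplicity of $\mathcal S_n$ gives a finite $F_n\subseteq A_n$ meeting every member of $\mathcal S_n$; \2 plays $F_n$. To see this wins, suppose $\bigcup_n F_n\notin\Omega_x$, so some basic neighbourhood $U\in\mathcal B$ of $x$ is disjoint from $\bigcup_n F_n$; choose $n$ with $U\in\mathcal S_n$; then $F_n\cap U=\emptyset$ contradicts the choice of $F_n$. (One must check that $x\notin\overline{\bigcup F_n}$ really produces a member of $\mathcal B$ disjoint from the set, which is exactly what ``$\mathcal B$ is a local base'' gives.)

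The main obstacle will be the converse $(d)\Rightarrow(c)$ for countable $X$, which is where the ideas from Scheepers's Theorem~1 of \cite{sch95} and Theorems 2.10--2.11 of \cite{barman-dow2} come in. Here I would fix a winning strategy $\sigma$ for \2 in $\gfin(\Omega_x,\Omega_x)$ and use countability of $\Omega_x$ (since $X$ is countable, $\Omega_x\subseteq\mathcal P(X)$ is a set of size at most $2^{\aleph_0}$, but more importantly ${}^{<\omega}\Omega_x$ — the finite sequences of \1-moves — is countable) to stratify the neighbourhood filter. Enumerate ${}^{<\omega}\Omega_x=\{\tau_n:n\in\omega\}$; for a finite sequence $\tau=(A_0,\dots,A_{m-1})$ of legal \1-moves, let $\sigma*\tau=(F_0,\dots,F_{m-1})$ be \2's responses under $\sigma$, and set $S(\tau)=\bigcup_{i<m}F_i$, a finite subset of $X\setminus\{x\}$. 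The natural candidate is to take, for each $n$, the family $\mathcal S_n$ of all open neighbourhoods $U$ of $x$ such that, for \emph{every} legal continuation $A$ of $\tau_n$, the move $\sigma(\tau_n{}^\frown A)$ meets $U$ — equivalently, $U$ is ``anticipated'' by the strategy at stage $\tau_n$. One shows $\bigcup_n\mathcal S_n$ is a local base: given any open $V\ni x$, if no finite $S(\tau)$ were contained in $X\setminus V$ with the right structure, one could build a play of the game in which \1 defeats $\sigma$ — contradicting that $\sigma$ wins. And each $\mathcal S_n$ is simple: if $A\in\Omega_x$, then $A$ is a legal \1-move, so $\sigma(\tau_n{}^\frown A)$ is a finite subset of $A$ meeting every $U\in\mathcal S_n$ by definition of $\mathcal S_n$. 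The delicate point — and the part requiring the most care — is verifying that $\bigcup_n\mathcal S_n$ is genuinely a local base (i.e. cofinal in $\tau_x$), because a priori a fixed finite stage $\tau_n$ of the strategy need not ``see'' a given small neighbourhood $V$; the resolution, following Scheepers, is a diagonalization: if $V$ escaped every $\mathcal S_n$, then for each $n$ there is a legal continuation $A^n$ of $\tau_n$ with $\sigma(\tau_n{}^\frown A^n)\cap V=\emptyset$, and one threads these into a single run of the game against $\sigma$ whose outcome misses $V$, contradicting that $\sigma$ is winning. Making this threading precise — ensuring the sequence of \1-moves one constructs is itself legal at every finite stage and that the resulting $\bigcup_n F_n$ avoids $V$ — is the technical heart of the argument, and is exactly where countability of ${}^{<\omega}\Omega_x$ is used (to organize the bookkeeping so that every relevant finite stage gets handled).
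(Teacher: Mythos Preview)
Your arguments for $(a)\Leftrightarrow(b)\Leftrightarrow(c)$ and for $(c)\Rightarrow(d)$ are correct and match what the paper leaves implicit.

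There is, however, a genuine gap in your treatment of $(d)\Rightarrow(c)$ for countable $X$. You assert that ``${}^{<\omega}\Omega_x$ --- the finite sequences of \1-moves --- is countable'', but this is false: even when $X$ is countable, the family $\Omega_x\subseteq\mathcal P(X)$ typically has cardinality $2^{\aleph_0}$ (as you yourself note two lines earlier), and hence so does ${}^{<\omega}\Omega_x$. Your enumeration ${}^{<\omega}\Omega_x=\{\tau_n:n\in\omega\}$ therefore does not exist, and the simple families $\mathcal S_n$ you go on to define are in fact indexed by an uncountable set, so you have not produced a \emph{countable} union.

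The paper's proof of $(d)\Rightarrow(b)$ repairs exactly this point: countability of $X$ is applied not to \1's moves but to \2's \emph{responses}. Since each $\varphi(A_0,\dots,A_m)$ lies in $[X]^{<\aleph_0}$, which is countable, at every position there are only countably many distinct response values; one chooses representatives $A^s_i\in\Omega_x$ so that, along any $s\in{}^{<\omega}\omega$, the sequence $(A^{\emptyset}_{s(0)},A^{(s(0))}_{s(1)},\dots)$ realises a given response pattern. The simple families are then indexed by the countable set ${}^{<\omega}\omega$ rather than by ${}^{<\omega}\Omega_x$, and the diagonalisation you sketch at the end (if some $V\in\tau_x$ escaped every family, thread the witnessing continuations into a single play defeating $\varphi$) goes through unchanged over this countable index set. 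Your overall architecture --- the definition of the simple families and the contradiction argument --- is right; what is missing is precisely this thinning-by-response-values step that makes the index set countable.
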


\begin{proof}

We prove the second part only --
namely, the implication $(d)\Rightarrow(b)$.
Let then $X$ be a countable space on which there is
a winning strategy $\varphi$ for \2 in the game
$\gfin(\Omega_x,\Omega_x)$.

Define
$$\mathcal{U}_{\emptyset}=\bigcap_{A\in\Omega_x}\{U\in\tau_x:U\cap\varphi(A)\neq\emptyset\}.$$
As $X$ is countable, the set
$\{\varphi(A):A\in\Omega_x\}\subseteq[X]^{<\aleph_0}$ is countable;
let $\mathcal{A}_{\emptyset}\subseteq\Omega_x$ be such that
$\{\varphi(A):A\in\Omega_x\}=\{\varphi(A^{\emptyset}_i):i\in\omega\}$.
Now, for each $(i_0)\in {}^1 \omega$, let
$$\mathcal{U}_{(i_0)}=\bigcap_{A\in\Omega_x}\{U\in\tau_x:U\cap\varphi(A^{\emptyset}_{i_0},A)\neq\emptyset\}.$$
Let $\{A^{(i_0)}_i:i\in\omega\}\subseteq\Omega_x$ be such that
$\{\varphi(A^{\emptyset}_{i_0},A):A\in\Omega_x\}=\{\varphi(A^{\emptyset}_{i_0},A^{(i_0)}_i):i\in\omega\}$.
For each $(i_0,i_1)\in {}^2 \omega$, define
$$\mathcal{U}_{(i_0,i_1)}=\bigcap_{A\in\Omega_x}\{U\in\tau_x:U\cap\varphi(A^{\emptyset}_{i_0},A^{(i_0)}_{i_1},A)\neq\emptyset\},$$
and pick $\{A^{(i_0,i_1)}_i:i\in\omega\}\subseteq\Omega_x$ satisfying
$\{\varphi(A^{\emptyset}_{i_0},A^{(i_0)}_{i_1},A):A\in\Omega_x\}=\{\varphi(A^{\emptyset}_{i_0},A^{(i_0)}_{i_1},A^{(i_0,i_1)}_i):i\in\omega\}$.
Then define
$$\mathcal{U}_{(i_0,i_1,i_2)}=\bigcap_{A\in\Omega_x}\{U\in\tau_x:U\cap\varphi(A^{\emptyset}_{i_0},A^{(i_0)}_{i_1},A^{(i_0,i_1)}_{i_2},A)\neq\emptyset\}$$
for each $(i_0,i_1,i_2)\in {}^3 \omega$, and so on.
By proceeding in this fashion, we construct $\mathcal{U}_s$ for every
$s\in {}^{<\omega}\omega$, by induction on $\dom(s)$.

Note that each family $\mathcal{U}_s$ is simple.
The proof will be finished once we show that
$\bigcup_{s\in {}^{<\omega} \omega}\mathcal{U}_s=\tau_x$.

Suppose, by way of contradiction, that there is
$V\in\tau_x\setminus\bigcup_{s\in {}^{<\omega} \omega}\mathcal{U}_s$.
We can then recursively pick $i_0,i_1,i_2,\dots$ in $\omega$ such that
$$V\cap\varphi(A^{\emptyset}_{i_0},A^{(i_0)}_{i_1},\dots,A^{(i_0,\dots,i_{n-1})}_{i_n})=\emptyset$$
for each $n\in\omega$.
But then
$$(\varphi(A^{\emptyset}_{i_0}),\varphi(A^{\emptyset}_{i_0},A^{(i_0)}_{i_1}),\dots,\varphi(A^{\emptyset}_{i_0},A^{(i_0)}_{i_1},\dots,A^{(i_0,\dots,i_{n-1})}_{i_n}),\dots)$$
is the sequence of \2's moves in a play of $\gfin(\Omega_x,\Omega_x)$
in which \2 makes use of the strategy $\varphi$ and loses.
This contradicts the fact that $\varphi$ is a winning strategy.
\end{proof}

We can now proceed to the proof that
$\2\not\,\uparrow\gfin(\Omega_p,\Omega_p)$ on $X_B$.

We will make use of Proposition \ref{equiv.simple}.
Suppose, towards a contradiction, that
$\{U_H:H\in[B]^{<\aleph_0}\}
=\bigcup_{n\in\omega}\mathcal{F}_n$,
where $\mathcal{F}_n$ is simple for each $n\in\omega$.

Let $n\in\omega$ be arbitrary.
Define $\mathcal{H}_n=
\{H\in[B]^{<\aleph_0}:U_H\in\mathcal{F}_n\}$.
As $\mathcal{F}_n$ is simple,
it follows from Lemma \ref{closure.XB} that
every $A\subseteq{}^{<\omega}2$ satisfying
$|A\cap\{g\upharpoonright j:j\in\omega\}|\ge\aleph_0$
for some $g\in({}^{\omega}2)\setminus B$
includes a finite subset $A_0$
that meets every element of $\mathcal{F}_n$.
By considering the particular case
$A=\{g\upharpoonright j:j\in\omega\}$ of this statement,
we obtain that, for every $g\in({}^{\omega}2)\setminus B$,
there is a finite $J_g\subseteq\omega$ such that,
for every $H\in\mathcal{H}_n$, there is some $j'\in J_g$
satisfying
$\forall h\in H\;(g\upharpoonright j'\nsubseteq h)$.
Therefore, for every $g\in({}^{\omega}2)\setminus B$,
it follows that $j_g=\max J_g$ satisfies
$g\upharpoonright j_g\nsubseteq h$
for all $h\in\bigcup\mathcal{H}_n$.
Hence, $\bigcup\mathcal{H}_n$ is nowhere dense in ${}^{\omega}2$
by Lemma \ref{nwd}.

Now, as $B\subseteq{}^{\omega}2$ is not meagre
(see e.g. \cite[pp. 29--30]{oxtoby}),
there is
$x\in B\setminus\bigcup_{n\in\omega}\bigcup\mathcal{H}_n$.
The set $U_{\{x\}}$ must be an element of $\mathcal{F}_n$
for some $n\in\omega$
-- which yields $\{x\}\in\mathcal{H}_n$,
thus contradicting the choice of $x$.
\end{proof}

The results obtained so far can be summarized by the following diagram:

\bigskip
\begin{tikzpicture}
  \matrix (m) [matrix of nodes, row sep=20mm, column sep=5mm]
{
$\2\uparrow\gn{n}(\Omega_x \Omega_x)$ & $\2\uparrow\gn{n + 1}(\Omega_x \Omega_x)$ & $\2\uparrow\gn{f}(\Omega_x \Omega_x)$ & $\2\uparrow\gfin(\Omega_x \Omega_x)$\\
$\1\not\,\uparrow\gn{n}(\Omega_x \Omega_x)$ & $\1\not\,\uparrow\gn{n + 1}(\Omega_x \Omega_x)$ & $\1\not\,\uparrow\gn{f}(\Omega_x \Omega_x)$ & $\1\not\,\uparrow\gfin(\Omega_x \Omega_x)$\\
& $\sone(\Omega_x, \Omega_x)$ & $\sn{f}(\Omega_x, \Omega_x)$ & $\sfin(\Omega_x, \Omega_x)$\\ 
};

\draw[->] (m-1-1) to node[auto]{\ref{ex.k.k1}} (m-1-2);
\draw[->] (m-1-2) to node[auto]{\ref{ex.salvador}} (m-1-3);
\draw[->] (m-1-3) to node[auto]{\ref{CpR}} (m-1-4);

\draw[->] (m-2-1) to node[auto]{\ref{ex.k.k1}} (m-2-2);
\draw[->] (m-2-2) to node[auto]{\ref{ex.salvador}} (m-2-3);
\draw[->] (m-2-3) to node[auto]{\ref{CpR}} (m-2-4);

\draw[->] (m-3-2) to node[auto]{\ref{ex.salvador}} (m-3-3);
\draw[->] (m-3-3) to node[auto]{\ref{CpR}} (m-3-4);

\draw[->] (m-1-1) to node[auto]{\ref{ex.gruen}} (m-2-1);
\draw[->] (m-1-2) to node[auto]{\ref{ex.gruen}} (m-2-2);
\draw[->] (m-1-3) to node[auto]{\ref{ex.gruen}} (m-2-3);
\draw[->] (m-1-4) to node[auto]{\ref{ex.gruen}} (m-2-4);

\draw[->] (m-2-2) to node[auto]{\ref{ex.scheepers}} (m-3-2);
\draw[->] (m-2-3) to node[auto]{\ref{ex.scheepers}} (m-3-3);
\draw[->] (m-2-4) to node[auto]{\ref{ex.scheepers}} (m-3-4);

\end{tikzpicture}

\bigskip
In the diagram, the arrows maked with \ref{ex.scheepers} indicate that
Example \ref{ex.scheepers} shows that the converse of the
corresponding implication does not hold; and so forth.

\section{New directions and open problems}

The selective topological properties we considered in this work are
the ``countable tightness'' particular cases of a broader
(non-topological) framework introduced by M. Scheepers in \cite{sch1},
which gave rise to a field of research that today is known as the
study of \emph{selection principles}.

\begin{defin}[Scheepers \cite{sch1}]
\label{def.S}
Let $\mA,\mB$ be nonempty families of nonempty sets.
$\sone(\mA,\mB)$ and $\sfin(\mA,\mB)$ denote, respectively, the
following statements:
\begin{itemize}
\item[$\sone(\mA,\mB)$ $\equiv$]
for every sequence $(A_n)_{n\in\omega}$ of elements of $\mA$, there is
a sequence $(b_n)_{n\in\omega}$ such that $b_n\in A_n$ for each
$n\in\omega$ and $\{b_n:n\in\omega\}\in\mB$;
\item[$\sfin(\mA,\mB)$ $\equiv$]
for every sequence $(A_n)_{n\in\omega}$ of elements of $\mA$, there is
a sequence $(F_n)_{n\in\omega}$ of finite sets such that
$F_n\subseteq A_n$ for each $n\in\omega$ and
$\bigcup_{n\in\omega}F_n\in\mB$.
\end{itemize}
\end{defin}

Although in this work we have concentrated on the instance
$(\mA,\mB)=(\Omega_x,\Omega_x)$ of the property schemas above,
there are various contexts in which properties of this kind arise
naturally.
A typical example is provided by the classical Rothberger \cite{roth}
and Menger \cite{hur} covering properties, which can be expressed in
terms of selection principles as $\sone(\mO,\mO)$ and $\sfin(\mO,\mO)$
respectively; here, $\mO$ stands for the family of all of the open
covers of a given topological space.

Some other selection principles have been added to $\sone$ and $\sfin$
in the literature, such as the selection principle $\sn{f}$ from
Section 3 -- which has appeared in e.g. the following result, proved
in Lemma 3.12 of \cite{salvador} and Lemma 3 of \cite{cie}
(see also \cite[Appendix A]{tsaban}):

\begin{prop}[Garc\'ia-Ferreira--Tamariz-Mascar\'ua \cite{salvador},
    Bukovsk\'y--Ciesielski \cite{cie}]
\label{roth.Gf}
Let $f\in{}^{\omega}\NN$ be arbitrary. Then the properties
$\sone(\mO,\mO)$ and $\sn{f}(\mO,\mO)$ are equivalent.
\end{prop}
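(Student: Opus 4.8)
The plan is to prove both implications separately, noting that $\sn{f}(\mO,\mO)\Rightarrow\sone(\mO,\mO)$ is the nontrivial direction (the reverse is immediate, since any single point selected is in particular a finite subset of size at most $f(n)$, so a witnessing sequence for $\sone(\mO,\mO)$ is automatically a witnessing sequence for $\sn{f}(\mO,\mO)$). Since by Proposition~\ref{equiv}(a) the statement is trivial when $f$ is bounded, and by Proposition~\ref{equiv}(b) it suffices to handle one unbounded $f$, I would fix a convenient unbounded $f$ --- for instance $f(n)=n+1$ --- and prove $\sn{f}(\mO,\mO)\Rightarrow\sone(\mO,\mO)$ for that one.

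First I would set up the combinatorial core. Given a sequence $(\mathcal{U}_n)_{n\in\omega}$ of open covers for which we must find a Rothberger selection, the idea is to partition $\omega$ into consecutive finite blocks $I_0,I_1,I_2,\dots$ with $|I_k|$ growing fast enough that within block $I_k$ the function $f$ attains a value at least $|I_k|$; concretely, if $f(n)=n+1$, choose the blocks so that the right endpoint of $I_k$ is at least $\big(\sum_{j<k}|I_j|\big)+|I_k|-1$, which is easily arranged. Then, for the block $I_k=\{m_k,m_k+1,\dots,m_k+\ell_k-1\}$, feed the selection principle $\sn{f}(\mO,\mO)$ a cleverly chosen cover at a single coordinate: the plan is to replace the $\ell_k$ covers $\mathcal{U}_{m_k},\dots,\mathcal{U}_{m_k+\ell_k-1}$ by a single cover $\mathcal{V}_k$ whose members are finite intersections (or, depending on the exact bookkeeping, by reindexing so that the $\ell_k$ original covers are consulted at $\ell_k$ positions all sitting inside one block where $f$ is large). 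Applying $\sn{f}(\mO,\mO)$ to the resulting sequence yields, for each $k$, a finite set $F_k$ of size at most $f$ of its coordinate --- hence at most $\ell_k$ members --- coming from the block's covers, with $\bigcup_k F_k$ a cover of $X$.

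Next I would unpack these finite selections back into a genuine one-point-per-coordinate selection. Having $\le\ell_k$ chosen sets from the $\ell_k$ covers $\mathcal{U}_{m_k},\dots,\mathcal{U}_{m_k+\ell_k-1}$ within block $I_k$, distribute them: assign each chosen set to a distinct coordinate in $I_k$ from which it legitimately came (this requires that the chosen sets genuinely be members of the covers they are attributed to, which is why the reindexing in the previous step must be done carefully, keeping track of which original cover each member of $\mathcal{V}_k$ descends from); for any coordinate in $I_k$ not yet assigned a set, pick an arbitrary member of $\mathcal{U}_n$ as padding. The union over all coordinates of these single selections contains $\bigcup_k F_k$, which covers $X$, so it is itself an open cover --- witnessing $\sone(\mO,\mO)$. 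For the Menger-type auxiliary claim $\sfin(\mO,\mO)$ one can note it follows at once from $\sone(\mO,\mO)$, or cite it as classical.

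The hard part will be the bookkeeping in the reduction step: arranging that each member of the auxiliary cover $\mathcal{V}_k$ carries with it an unambiguous ``origin'' among $\mathcal{U}_{m_k},\dots,\mathcal{U}_{m_k+\ell_k-1}$, so that the $\le\ell_k$ sets returned by $\sn{f}(\mO,\mO)$ can be injectively matched back to the $\ell_k$ coordinates of the block. One clean way is to tag: replace $\mathcal{U}_{m_k+i}$ by $\{U : U\in\mathcal{U}_{m_k+i}\}$ formally labelled by $i$, take $\mathcal{V}_k$ to be the disjoint union of these labelled families (still an open cover of $X$), and observe that $\sn{f}(\mO,\mO)$'s output respects labels --- at most $f(m_k)\ge\ell_k$ labelled sets, which by pigeonhole one can assume hit distinct labels after discarding duplicates within a label (keeping just one per label loses nothing for a cover). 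I expect this to be the only genuinely delicate point; everything else is routine partitioning of $\omega$ and padding.
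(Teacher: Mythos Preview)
The paper does not prove this proposition; it is quoted as a known result from \cite{salvador} and \cite{cie}. So there is no ``paper's proof'' to compare against, and I will comment on your argument directly.

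Your overall plan --- partition $\omega$ into finite blocks, build from each block a single auxiliary cover, apply $\sn{f}$ to the new sequence, then redistribute the finite selections back to single choices --- is indeed the standard route. The version in which $\mathcal{V}_k$ consists of \emph{intersections} $\bigcap_{n\in I_k}U_n$ with $U_n\in\mathcal{U}_n$ works; the version you commit to at the end (labelled disjoint unions) does not.

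The gap is in your final paragraph. If $\mathcal{V}_k$ is the disjoint union of the labelled families $\mathcal{U}_{m_k},\dots,\mathcal{U}_{m_k+\ell_k-1}$, then the sets in $F_k$ may all carry the same label, and ``discarding duplicates within a label'' genuinely throws away coverage: after deletion, $\bigcup_k\bigcup F_k$ need no longer be a cover of $X$. There is no pigeonhole reason why the labels should be distinct, and keeping one set per label does not dominate the discarded ones. By contrast, with intersections each $V\in F_k$ satisfies $V\subseteq U_n$ for \emph{every} $n\in I_k$, so you may assign the $|F_k|$ intersections to $|F_k|$ distinct coordinates of $I_k$ arbitrarily and still cover $\bigcup F_k$.

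There is also a bookkeeping slip in the block condition. What you need is $|I_k|\ge f(k)$ (so that the at most $f(k)$ intersections returned at coordinate $k$ can be injected into the $|I_k|$ coordinates of $I_k$), not the condition ``$f$ attains a value $\ge|I_k|$ within $I_k$'' that you state --- the latter is vacuous for $f(n)=n+1$ and in any case points in the wrong direction. Finally, Proposition~\ref{equiv} is stated for $\Omega_x$, not for $\mO$; the analogous equivalence for $\mO$ holds by the same argument, but you should either reprove it or note the transfer explicitly rather than cite the $\Omega_x$ version.
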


Each selection principle has game naturally associated to it.
For example, in the original definition due to Scheepers, we have:

\begin{defin}[Scheepers \cite{sch3}]
\label{def.G}
Let $\mA,\mB$ be nonempty families of nonempty sets.
$\gone(\mA,\mB)$ and $\gfin(\mA,\mB)$ denote, respectively, the
following games.
\begin{itemize}
\item[$\cdot$]
In each inning $n\in\omega$ of $\gone(\mA,\mB)$,
\1 chooses $A_n\in\mA$, and then \2 picks $b_n\in A_n$.
The winner is \2 if $\{b_n:n\in\omega\}\in\mB$,
and \1 otherwise.
\item[$\cdot$]
In each inning $n\in\omega$ of $\gfin(\mA,\mB)$,
\1 chooses $A_n\in\mA$,
and then \2 picks a finite subset $F_n\subseteq A_n$.
The winner is \2 if $\bigcup_{n\in\omega}F_n\in\mB$,
and \1 otherwise.
\end{itemize}
\end{defin}

Though the selection principles $\sn{f}$ and $\sn{k}$ have already
occurred in the literature,
to our knowledge
their game versions $\gn{f}$ and $\gn{k}$ have not been object
of study thus far.
The fact that such counterintuitive differences between games such as
$\gn{k}$ and $\gn{k+1}$
spring in a typically infinitary context such as topological spaces
-- in which many general properties are not sensitive to changes from
a finite number to another --
suggests that investigating other instances of these games might lead
to a whole new class of interesting results.

\begin{prob}
What can be said about the relation between the various games
$\gn{k}(\mA,\mB)$, $\gn{f}(\mA,\mB)$ and $\gfin(\mA,\mB)$
-- and their associated selective properties --
for other pairs $(\mA,\mB)$?
\end{prob}

As we have seen in this work, the games of form
$\gn{k}(\Omega_x,\Omega_x)$ bear differences between them that are not
verified in their selective counterparts $\sn{k}(\Omega_x,\Omega_x)$.
Since, by Proposition \ref{roth.Gf}, the class of mutually equivalent
selective properties of form $\sn{\square}(\mO,\mO)$ is even greater,
one may wonder whether the differences between the games are also
verified in this case.

\begin{prob}
\label{(O,O)?}
For $k\in\NN$, is there a space on which the games
$\gn{k}(\mO,\mO)$ and $\gn{k+1}(\mO,\mO)$ are not equivalent?
What about their relation to the game $\gn{f}(\mO,\mO)$
for $f\in{}^{\omega}\NN$?
\end{prob}

\section*{Acknowledgements}

We wish to express our gratitude to Boaz Tsaban
for bringing the paper \cite{salvador} to our attention
and for suggesting Proposition \ref{equiv-games}.

\end{document}